\pdfoutput=1
\documentclass[letterpaper, 10 pt, conference]{ieeeconf}  

\IEEEoverridecommandlockouts                              

\overrideIEEEmargins                                      



\usepackage{mathrsfs}
\usepackage[font={small}]{caption}
\usepackage{scalerel}
\usepackage{url}
\usepackage{bbold}
\usepackage{amsfonts}
\usepackage{amsmath,amssymb,amsfonts}
\usepackage{graphicx}
\usepackage{wrapfig}
\usepackage{mathrsfs} 
\usepackage{algorithm,algorithmic}
\usepackage{array}
\usepackage{times}
\usepackage{url}
\usepackage{cite}
\usepackage[thinc]{esdiff}
\newcommand{\citet}[1]{\cite{#1}}
\usepackage{upgreek}
\usepackage{float}
\usepackage{longtable}
\usepackage{color}
\usepackage{wasysym}
\usepackage{grffile}
\usepackage{stackengine}
\usepackage[capitalize,noabbrev]{cleveref}

\allowdisplaybreaks

\addtolength{\topmargin}{.05in}

\linespread{.96}





\newcommand{\bb}{\mathbf{b}}

\newcommand{\eb}{\mathbf{e}}

\newcommand{\vb}{\mathbf{v}}
\newcommand{\ub}{\mathbf{u}}
\newcommand{\wb}{\mathbf{w}}
\newcommand{\xb}{\mathbf{x}}
\newcommand{\yb}{\mathbf{y}}

\newcommand{\Ab}{\mathbf{A}}
\newcommand{\Bb}{\mathbf{B}}
\newcommand{\Cb}{\mathbf{C}}

\newcommand{\Gb}{\mathbf{G}}
\newcommand{\Hb}{\mathbf{H}}

\newcommand{\Kb}{\mathbf{K}}
\newcommand{\Lb}{\mathbf{L}}
\newcommand{\Mb}{\mathbf{M}}
\newcommand{\Nb}{\mathbf{N}}
\newcommand{\Pb}{\mathbf{P}}

\newcommand{\Rb}{\mathbf{R}}

\newcommand{\Xb}{\mathbf{X}}


\newcommand{\Xc}{\mathcal{X}}






\newcommand{\norm}[1]{\left\lVert#1\right\rVert}


\newtheorem{theorem}{Theorem}

\newtheorem{assumption}{Assumption}

\newtheorem{definition}{Definition}

\newtheorem{lemma}[theorem]{Lemma}

\newtheorem{remark}{Remark}

\title{\bf Regret Analysis of Distributed Online Control for LTI Systems with Adversarial Disturbances}

%

\author{Ting-Jui Chang and Shahin Shahrampour  
\thanks{T.J. Chang and S. Shahrampour are with the Department of Mechanical and Industrial Engineering, Northeastern University, Boston, MA 02115, USA. 
{\tt\footnotesize email:\{chang.tin,s.shahrampour\}@northeastern.edu}.}%
}

\begin{document}

\maketitle
\thispagestyle{plain}
\pagestyle{plain}

\begin{abstract}
  This paper addresses the distributed online control problem over a network of linear time-invariant (LTI) systems (with possibly unknown dynamics) in the presence of adversarial perturbations. There exists a global network cost that is characterized by a {\it time-varying} convex function, which evolves in an adversarial manner and is sequentially and partially observed by local agents. The goal of each agent is to generate a control sequence that can compete with the best {\it centralized} control policy in hindsight, which has access to the global cost. This problem is formulated as a {\it regret} minimization. For the case of known dynamics, we propose a fully distributed disturbance feedback controller that guarantees a regret bound of $O(\sqrt{T}\log T)$, where $T$ is the time horizon. For the unknown dynamics case, we design a distributed explore-then-commit approach, where in the exploration phase all agents jointly learn the system dynamics, and in the learning phase our proposed control algorithm is applied using each agent system estimate. We establish a regret bound of $O(T^{2/3} \text{poly}(\log T))$ for this setting.
\end{abstract}

\section{Introduction}
In recent years, there has been a significant interest  in studying problems at the interface of control and machine learning, and modern statistical and online optimization tools have paved the way to rethink classical control problems. Consider the linear quadratic regulator (LQR) problem, where for a given LTI system, an optimal control problem is defined with a time-invariant quadratic function of control-state pairs. It is well-known that when the underlying dynamics is known, the optimal controllers can be computed by solving the Riccati equations for both cases of finite-horizon and infinite-horizon problems.

While many classical optimal/adaptive control problems serve as good examples of real-world phenomena, in certain frameworks, they may fail to provide adaptive solutions for non-stationary environments that have these three characteristics. (I) The environment can vary in an unpredictable manner, which makes the cost metrics {\it time-varying} and {\it unknown} in advance. (II) The dynamical system may be driven with a noise sequence that does not follow any statistical assumption (i.e., it may be chosen in an {\it adversarial} manner). (III) The parameters of the underlying dynamics may be {\it unknown}. Due to these challenges, one cannot design an optimal controller sequence that depends on future information (e.g., future cost or noise instances).

In these cases, there is a need to develop {\it online} controllers, the performance of which is typically measured by the notion of {\it regret}, defined as the difference between the accumulated cost of the online controller and that of the best control policy in hindsight. In online control, the  goal is to keep the regret bound sub-linear with respect to the time horizon, which implies that the time-averaged performance of the online controller mimics that of the best policy asymptotically. Though the regret measure originates from the  online optimization framework, it has been shown that there exist direct connections between regret and the stability properties of linear and non-linear systems \cite{karapetyan2022implications,nonhoff2023relation} studied in control theory, which further justifies the study of regret for online control. For challenges posed in (I-II), recent works often transform the respective online control problem to an online learning and leverage  online optimization techniques to design online controllers (e.g., semidefinite programming (SDP) relaxation for time-varying LQR \cite{cohen2018online} and noise feedback policy design for time-varying convex costs in \cite{agarwal2019online}). (III) is typically addressed via adaptive control, where the learner must maintain a good balance between exploration (estimating the system parameters while keeping the cost small enough) and exploitation (constructing a controller based on the system estimate) \cite{abbasi2011regret,ibrahimi2012efficient,dean2018regret,cohen2019learning}. It is also worth noting that for classical control algorithms making control decisions in an online manner (e.g., model predictive control), several types of update rules can be considered as special cases of online learning framework \cite{wagener2019online}.

On the other hand, the  computational cost of complex control problems motivates the design of distributed control algorithms with modest local computational costs. A well-known setup is the case where the system is composed of a network of sub-systems with a joint task and decoupled dynamics, e.g., microsatellite clusters \cite{burns2000techsat,schaub2000spacecraft}, unmanned aerial vehicles \cite{buzogany1993automated,wolfe1996decentralized} and mobile robotics \cite{yamaguchi1998cooperative,yamaguchi2001distributed,yang2022collaborative}. Motivated by these practical considerations, in this paper,  we study the ``distributed" online control problem over a network of LTI systems (with possibly unknown dynamics) in the presence of adversarial perturbations. Each agent in the network is modeled as an LTI system. There exists a global network cost that is characterized by a {\it time-varying} convex function, which evolves in an adversarial manner and is sequentially and partially observed by local agents. The goal of each agent is to design an online control sequence (using partial information) that attains a comparable performance to that of the best centralized policy in hindsight, which keeps the regret small. Our main contributions are summarized as follows
\begin{enumerate}
    \item For the case of known system dynamics, we build on the disturbance feedback controller (DFC) proposed in \cite{agarwal2019online} to develop a distributed algorithm called distributed DFC (D-DFC). At each round, the network performs a distributed online gradient descent (D-OGD) step to update each agent control parameters, which are used together with the past noises to determine the action of the next round. For the unknown dynamics case, we propose a two-phase distributed algorithm. In the exploration phase, agents collect the data and jointly estimate the system parameters in a distributed fashion. During this phase, no learning is achieved and the actions are determined stochastically. In the exploitation phase, D-OGD is applied to update the control parameters, but the action of each agent is determined based on its own noise and system estimates.

    \item For the case of known system dynamics, we show that via the reparameterization of D-DFC, the distributed online control problem can be transformed into a distributed online learning (D-OL) problem with memory. Then, by appropriately choosing the hyper-parameters (learning rate and the window length of past noises considered by the current action), we establish a regret bound of $O(\sqrt{T}\log T)$ (Theorem \ref{T: Regret bound (known case)}), which recovers the regret rate of its centralized counterpart in \cite{agarwal2019online}.

    \item For the unknown dynamics case, to ensure a sub-linear regret bound, it is important to maintain a good balance between the exploration and exploitation phases as they play conflicting roles against each other. 
    A key difference compared to the known dynamics case is that the local costs used by D-OGD are defined based on each agent system and noise estimates. 
    This deviates the framework from the usual distributed setting where the sum of local costs is equal to the global cost. Therefore, the analysis of D-OL with memory is not directly applicable. To tackle this challenge, we provide theoretical results that quantify the effect of different hyper-parameters on the function value (Lemma 17). Combining the analysis mentioned above, we prove a regret bound of $O\big(T^{2/3}\text{poly}(\log T)\big)$ (Theorem \ref{T: Regret bound (unknown case)}). 
    
\end{enumerate}

\section{Related Literature}
\textbf{Online Optimization:} Online control is related to online convex optimization (OCO), where a learner has to select a sequence of actions in real-time for a time-varying convex function  not known in advance. In the last two decades, many OCO algorithms have been developed, including OGD by the seminal work of \cite{zinkevich2003online}, which achieves $O(\sqrt{T})$ regret (see e.g., \cite{cesa2006prediction,hazan2016introduction} for more details). Our work is closely related to the setup of OCO with memory \cite{anava2015online} though we develop a distributed variant of this technique. In summary, OCO investigates pure optimization problems as opposed to dynamical systems. 

\textbf{Regret vs. Stability:} As mentioned previously, the resulting regret of an online control policy can be regarded as an indicator of the stability of the corresponding closed loop system. In the work of \cite{karapetyan2022implications}, it was shown that for linear policies and linear systems subject to adversarial noises, linear regret implies asymptotic stability for both time-invariant and time-varying systems. Conversely, the asymptotic stability (bounded input-bounded state stability and absolute summability of the state transition matrices, respectively) implies linear regret for time-invariant systems (time-varying systems, respectively). Such a relationship was also discussed for non-linear systems in \cite{nonhoff2023relation}.

\textbf{LQR with Online Action Sequence:} A well-structured  optimal control problem is the LQR, where the cost functions are quadratic in state-action pairs. 
When the system dynamics is {\it unknown} and adaptive controllers should be built on the system estimates, there are many works in the literature that prove sub-linear regret bounds \cite{dean2018regret,cohen2019learning,cassel2020logarithmic,simchowitz2020naive,lale2022reinforcement}. Another scenario is that the system might be perturbed with noises that are potentially adversarial. \cite{yu2020power} proposed a model predictive control type method with correct noise predictions for LQR with a time-invariant cost. \cite{zhang2021regret} extended this idea for the case where the quadratic cost is time-varying and the noise prediction is only approximately correct. Both works provided bounds for the dynamic regret.

\textbf{Online Control with Time-Varying Costs:}
We roughly divide the online control literature into two categories, based on whether the system dynamics is known or not.

\textbf{(1) Known Systems:} \cite{cohen2018online} studied the case where the system is perturbed by stochastic noises, and the time-varying costs are quadratic. By formulating the problem as a time-varying SDP, they proved a regret bound of $O(\sqrt{T})$. The setup was later extended to the distributed case and the regret bound of the same order was given in \cite{chang2021distributed}. \cite{agarwal2019online} considered the setup where the noises are chosen adversarily and the costs are generally convex. The authors proposed the disturbance-action policy and parameterized the online control problem as an online learning problem with memory, which achieves a regret bound of $O(\sqrt{T})$. The regret bound was later improved to $O(\text{poly}(\text{log}T))$ for strongly convex costs \cite{agarwal2019logarithmic}. \cite{simchowitz2020improper} further extended the $O(\text{poly}(\text{log}T))$ regret bound to partially observable systems with semi-adversarial disturbances. 

\textbf{(2) Unknown Systems:} \cite{hazan2020nonstochastic} considered a fully observable system with adversarial noises. They showed that by applying the disturbance-action policy computed on noises estimated using the learned model, the regret bound is of $O(T^{2/3})$ for convex functions. The more recent work of \cite{cassel2022rate} considered a similar setup where the noises are i.i.d. Gaussian. They proposed a method where the system estimate is updated throughout the learning process and proved a regret bound of $O(\sqrt{T})$. For partially observable systems, \cite{simchowitz2020improper} provided regret bounds of $O(T^{2/3})$ and $O(\sqrt{T})$ for convex functions (in adversarial noise setting) and strongly-convex functions (in semi-adversarial noise setting), respectively. \cite{lale2020logarithmic} showed the regret bound can be improved to $O(\text{poly}(\text{log}T))$ for strongly convex functions when noises are stochastic. For the distributed setup, the work of \cite{chang2023regret} proposed an $O(T^{2/3}\log T)$ regret bound for the case where the cost functions are quadratic and the noises are Gaussian.

\textbf{System Identification:}
To address the optimal control problem for an unknown system, it is important to have an estimation of the system parameters first. The classical theory of system identification for LTI systems \cite{aastrom1971system,ljung1999system,chen2012identification,goodwin1977dynamic} characterizes the asymptotic properties of the estimators. On the other hand, recent studies focused on developing finite-time guarantees. For systems perturbed by stochastic noises, \cite{dean2019sample} proposed a least-squares estimator to learn the model from multiple trajectories for a fully observable LTI system. These results were later extended to the estimation using a single trajectory \cite{simchowitz2018learning, sarkar2019near}. For partially observable systems, estimators with sample complexities scaling polynomially on the system dimension were provided in the literature \cite{oymak2019non, sarkar2019finite, tsiamis2019finite, simchowitz2019learning}. \cite{fattahi2020learning} further improved the dependency to be poly-logarithmic. For the setup of adversarial noises, least-squares estimators can result in inconsistent estimates. \cite{hazan2020nonstochastic} and \cite{simchowitz2019learning} proposed methods for tackling fully and partially observable systems, respectively. Unlike aforementioned methods that are offline in the sense that the data is collected first, \cite{kowshik2021streaming} proposed an online estimation algorithm which applies stochastic gradient descent in the reverse order of the data sequence using buffers. This idea was later extended to the distributed setup \cite{chang2022distributed} and was also considered for the system identification of generalized linear models \cite{kowshik2021near}.

\textbf{Online Control with Time-Varying Dynamics:} As opposed to the works mentioned previously, which were mainly focused on time-invariant dynamical systems, the authors of \cite{gradu2023adaptive} considered the online control problems with linear time-varying dynamics. They introduced the idea of adaptive regret to capture the adaptation of the controller to changing dynamics. This setup was later extended to safety control where the state and control sequences have to meet certain constraint requirements, and bounds in terms of dynamic regret were presented for fully observable linear systems \cite{zhou2023safe} and partially observable systems \cite{zhou2022safe}.

\section{Preliminaries and Problem Setup}
\subsection{Notation}
\begin{center}
\begin{tabular}{|c||l|}
    \hline
    $[m]$ & The set $\{1,2,\ldots,m\}$ for any integer $m$ \\
    \hline
    $\norm{\cdot}$ & Euclidean (spectral) norm of a vector (matrix)\\
    \hline
    $\norm{\cdot}_F$ & Frobenius norm of a matrix\\
    \hline
   $\Pi_{\Xc}[\cdot]$ & The operator for the projection to set $\Xc$ \\
    \hline
    $[\Ab]_{ij}$ & The entry in the $i$-th row and $j$-th column of $\Ab$\\
    \hline
    $[\Ab]_{i,:}$ & The $i$-th row of $\Ab$\\
    \hline
    $[\Ab]_{:,j}$ & The $j$-th column of $\Ab$\\
    \hline
    $\mathbf{1}$ & The vector of all ones\\
    \hline
    $\eb_i$ & The $i$-th basis vector\\
    \hline
    $\mathbb{1}$ & Indicator function\\
    \hline
\end{tabular}
\end{center}

\subsection{Dynamical System and Cost Model}
We consider a multi-agent network of $m$ identical LTI systems, where the dynamics of agent $i$ is governed by,
$$\xb_{i,t+1} = \Ab\xb_{i,t}+\Bb\ub_{i,t}+\wb_{t},\quad i\in [m],$$
where $\xb_{i,t}\in\mathrm{R}^{d_1}$ and $\ub_{i,t}\in\mathrm{R}^{d_2}$ represent agent $i$ state  and control (or action) at time $t$, respectively. The noise sequence $\{\wb\}$ is assumed to be bounded and {\it unknown} to agents in advance, and it can be chosen in an {\it adversarial} fashion. In this work, we consider two cases: (1) The system parameters $(\Ab,\Bb)$ are known; (2) The parameters $(\Ab,\Bb)$ are unknown, and all agents have to jointly estimate the parameters and decide actions based on their learned system estimates.

The distributed online control problem is as follows: At round $t$, agent $i$ observes the state $\xb_{i,t}$ and applies the action $\ub_{i,t}$. Then, the {\it local function} $c_{i,t}(\cdot,\cdot)$ is revealed to the agent, and the cost $c_{i,t}(\xb_{i,t}, \ub_{i,t})$ is incurred. Instead of being driven by the individual local costs, a distributed online control policy aims at minimizing the network cost $c_t(\cdot,\cdot) = \sum_{i=1}^m c_{i,t}(\cdot,\cdot)$. That is, a successful policy is designed such that the state-control sequence $\{(\xb_i,\ub_i)\}$ of any agent $i\in [m]$ enjoys a good performance on the {\it network} cost, which is consistent with the {\it cooperative} control setting. 

\textbf{Centralized Benchmark and Individual Regret:} To measure the performance of a distributed online control algorithm, consider a centralized benchmark policy $\pi$ for which the finite-time cost of state-action pairs $(\xb^{\pi}_t, \ub^{\pi}_t)$ after $T$ rounds is computed as
\begin{equation}\label{Eq: Centralized benchmark}
    J_T(\pi)=\sum_{t=1}^T c_t(\xb^{\pi}_t, \ub^{\pi}_t).
\end{equation}
The goal of a distributed online control algorithm $\mathcal{A}$ is to compete with the best centralized policy in hindsight, which minimizes $\eqref{Eq: Centralized benchmark}$ over a class of policies $\Pi$. The suboptimality is captured by the individual {\it regret}, which is defined as follows
\begin{align}\label{Eq: Individual Regret}
    \text{Regret}_T^j(\mathcal{A}):=J_T^j(\mathcal{A})-\min_{\pi\in\Pi}J_T(\pi),
\end{align}
for agent $j\in [m]$, where 
\begin{align}
    J_T^j(\mathcal{A}) 
    &=\sum_{t=1}^T c_t(\xb^{\mathcal{A}}_{j,t}, \ub^{\mathcal{A}}_{j,t})=\sum_{t=1}^T\sum_{i=1}^m c_{i,t}(\xb^{\mathcal{A}}_{j,t}, \ub^{\mathcal{A}}_{j,t}).
\end{align}
In above, $(\xb^{\mathcal{A}}_{j,t}, \ub^{\mathcal{A}}_{j,t})$ are the state-action pairs generated by agent $j$ using algorithm $\mathcal{A}$. In this work, the benchmark policy class is defined as the class of strongly stable linear policies (see Definition \ref{D: Strong Stability}). From the definition of regret, a successful distributed online control policy is one that attains a regret upper bound sublinear in $T$, which implies that the temporal average cost of the distributed algorithm approaches to that of the best centralized policy in hindsight asymptotically.

\textbf{Network Structure:} Since the goal is to minimize the network cost, it is necessary for all agents to communicate their local function information with their neighbors in order to {\it approximately} follow the network cost. The network topology governing the communication is defined with an undirected graph $\mathcal{G}=(\mathcal{V},\mathcal{E})$, where $\mathcal{V}=[m]$ denotes the set of nodes (i.e., agents) and $\mathcal{E}$ represents the set of edges. The weighted adjacency matrix is denoted as $\Pb$, where $[\Pb]_{ji}>0$ if agent $j$ communicates with agent $i$; otherwise $[\Pb]_{ji}=0$. $\Pb$ is assumed to be symmetric and doubly stochastic, i.e., all elements of $\Pb$ are non-negative and $\sum_{i=1}^m [\Pb]_{ji}=\sum_{j=1}^m [\Pb]_{ji}=1$. We further assume $\Pb$ is connected and has a positive diagonal. Then, there exists a geometric mixing bound for $\Pb$, such that $\sum_{j=1}^m \left|[\Pb^k]_{ji}-1/m\right|\leq \sqrt{m}\beta^k,\:i\in [m],$ where $\beta$ is the second largest singular value of $\Pb$.

\subsection{Strong Stability and Strong Controllability}
The benchmark policy class is considered as the set of strongly stable linear (i.e., $\ub=-\Kb\xb$) controllers. Following \cite{cohen2018online}, we define the notion of strong stability as follows. 
\begin{definition}\label{D: Strong Stability}(Strong Stability) A linear policy $\Kb$ is $(\kappa, \gamma)$-strongly stable (for $\kappa > 0$ and $0<\gamma\leq 1$) for the LTI system $(\Ab,\Bb)$, if $\norm{\Kb}\leq \kappa$, and there exist matrices $\Lb$ and $\Hb$ such that $\Ab-\Bb\Kb=\Hb\Lb\Hb^{-1}$, with $\norm{\Lb}\leq 1-\gamma$ and $\norm{\Hb},\|\Hb^{-1}\|\leq \kappa$.
\end{definition}
The notion of strong stability provides a quantification of a stable linear policy, in the sense that any stable linear policy is strongly stable with some $\kappa$ and $\gamma$. As mentioned in \cite{cohen2018online}, a sufficient condition ensuring the existence of a strongly stable linear policy is the controllability of the system $(\Ab,\Bb)$, which is characterized by the ability to drive any non-zero initial state to zero through the unperturbed dynamics $\xb_{t+1} = \Ab\xb_{t} + \Bb\ub_{t}$. Similar to the strong stability, the notion of strong controllability is also presented in \cite{cohen2018online} as follows:

\begin{definition}({\it Strong Controllability})
    For a linear system $(\Ab,\Bb)$ and $q\geq 1$, define a matrix $\Cb_{q}\in\mathrm{R}^{d_1\times qd_2}$ as 
    $$\Cb_{q} = [\Bb, \Ab\Bb, \Ab^2\Bb,\ldots, \Ab^{q-1}\Bb].$$
    The system $(\Ab,\Bb)$ is controllable with index $q$ if $\Cb_q$ is full row-rank, and it is $(q,\kappa)$-strongly controllable if $\norm{(\Cb_q\Cb_q^{\top})^{-1}}\leq \kappa$. 
\end{definition}

Compared to the strong stability, which focuses on quantifying the stabilizing property of a given policy $\Kb$, the idea of strong controllability helps with quantifying the effect of a given initial state. For example, in \cite{cohen2018online}, the bound on the accumulated cost of an online LQR problem is given in terms of the initial state. Strong controllability also plays an important role in system identification. For example, \cite{hazan2020nonstochastic} showed that the estimation error of $\Ab$ is related to the smallest singular value of $\Cb_q$ as the estimate of $\Ab$ is derived by solving a perturbed linear equation in terms of $\Cb_q$.

\subsection{Disturbance Feedback Controller}
One challenge preventing the efficient online learning for linear policies of type $\ub=-\Kb\xb$ is the non-convex parameterization, as $\xb_t$ becomes a non-convex function of the policy parameter $\Kb$. To tackle this issue, instead of using the state feedback control using $\Kb$, \cite{agarwal2019online} proposed a disturbance-feedback controller (DFC), where the control is represented as a linear combination of the past noises given the knowledge of a $\Kb$ matrix that incorporates the effect of the current state. In this paper, we extend DFC to the distributed setup, where the linear coefficients of all agents are jointly updated by applying D-OGD. However, let us start by stating the definition of DFC in its centralized form next.

\begin{definition}
    A disturbance feedback controller $(\Mb, \Kb)$ is defined by a matrix $\Kb$ and parameters $\Mb = \{\Mb^{[0]},\ldots,\Mb^{[H-1]}\}$ for an integer $H\geq 1$. At round $t$, the action $\ub_t$ on $\xb_t$ is chosen as follows
    \begin{equation*}
        \ub^{\Kb}_t = -\Kb\xb^{\Kb}_t + \sum_{k=1}^H \Mb^{[k-1]}\wb_{t-k}, 
    \end{equation*}
    where $\wb_t=0$ for $t\leq 0$ by default.
\end{definition}

The parameters $\Mb$ in DFC must be learned as a part of the process. Consider a mechanism that learns these parameters, leading to the time-varying DFC $\ub_t = -\Kb\xb_t + \sum_{k=1}^H \Mb_t^{[k-1]}\wb_{t-k}$. Expanding the dynamics recursively, $\xb_{t+1}$ can be expressed as the following form
\begin{equation*}
\resizebox{\linewidth}{!}
    {
    \begin{minipage}{\linewidth}
    \begin{align*}
        \xb^{\Kb}_{t+1} = (\Ab - \Bb\Kb)^{h+1}\xb^{\Kb}_{t-h} + \sum_{i=0}^{H+h} \Psi^{\Kb,h}_{t,i}(\Mb_{t-h:t}|\Ab,\Bb) \wb_{t-i},
    \end{align*}
    \end{minipage}
    }
\end{equation*}
where the disturbance-state transfer matrix $\Psi^{\Kb,h}_{t,i}(\Mb_{t-h:t}|\Ab,\Bb)$ is defined as follows.
\begin{definition}(Disturbance-state transfer matrix)
For any $t$, $h\leq t$, $i\leq H+h$, the transfer matrix $\Psi^{\Kb,h}_{t,i}(\Mb_{t-h:t}|\Ab,\Bb)$ has the following form
\begin{equation*}
\resizebox{\linewidth}{!}
    {
    \begin{minipage}{\linewidth}
    \begin{align*}
        &\Psi^{\Kb,h}_{t,i}(\Mb_{t-h:t}|\Ab,\Bb)\\ = &(\Ab - \Bb\Kb)^i \mathbb{1}_{i\leq h} + \sum_{j=0}^h (\Ab - \Bb\Kb)^j\Bb\Mb^{[i-j-1]}_{t-j}\mathbb{1}_{i-j\in [1,H]}.
    \end{align*}
    \end{minipage}
    }
\end{equation*}
\end{definition}
In the work of \cite{agarwal2019online}, it is shown that DFC can approximate a linear policy well as the controller takes the past noise sequence into consideration long enough (i.e.,  large enough $H$). For a time-varying DFC $(\Mb_t,\Kb)$, from the above definition we can see that $\xb_{t+1}$ is a function of $\Mb_{1:t}$, which prevents this framework from fitting into the online learning setting, as the number of parameters grows over time. To deal with this issue, we leverage the idea of the surrogate state $\yb^{\Kb}_{t+1}$ \cite{agarwal2019online}, which disregards the dependency on $\xb^{\Kb}_{t-H}$.
\begin{definition}
The surrogate state $\yb^{\Kb}_{t+1}$, the surrogate action $\vb^{\Kb}_{t+1}$ and the resulting cost are defined as follows
\begin{equation*}
\resizebox{\linewidth}{!}
    {
    \begin{minipage}{\linewidth}
    \begin{align*}
        &\yb^{\Kb}_{t+1}(\Mb_{t-H:t}|\Ab,\Bb,\{\wb\})\\ = &\sum_{i=0}^{2H} \Psi^{\Kb,H}_{t,i}(\Mb_{t-H:t}|\Ab,\Bb) \wb_{t-i},\\
        &\vb^{\Kb}_{t+1}(\Mb_{t-H:t}|\Ab,\Bb,\{\wb\})\\ = &-\Kb\yb^{\Kb}_{t+1}(\Mb_{t-H:t}|\Ab,\Bb,\{\wb\}) + \sum_{i=1}^H \Mb_{t+1}^{[i-1]} \wb_{t+1-i},\\
        &f_{i,t}(\Mb_{t-H:t}|\Ab,\Bb,\{\wb\})\\ = &c_{i,t}\big(\yb^{\Kb}_{t+1}(\Mb_{t-H:t}|\Ab,\Bb,\{\wb\}), \vb^{\Kb}_{t+1}(\Mb_{t-H:t}|\Ab,\Bb,\{\wb\})\big),\\
        &i\in[m].
    \end{align*}
    \end{minipage}
    }
\end{equation*}
\end{definition}
For $f_{i,t}(\Mb_{t-H:t}|\Ab,\Bb,\{\wb\})$, if a time-invariant policy $(\Mb,\Kb)$ is applied over time, we denote it as $f_{i,t}(\Mb|\Ab,\Bb,\{\wb\})$.

\subsection{Distributed OCO with Memory}
A key to the analysis of distributed online control problems is to connect them to the framework of distributed online convex optimization (OCO) with memory. In centralized OCO with memory \cite{anava2015online}, at round $t$ the online learner picks an action $\xb_t\in\mathcal{X}\subset \mathrm{R}^d$, then the cost function $f_t:\mathcal{X}^{H+1}\to \mathrm{R}$ is revealed to the learner that incurs the cost $f_t(\xb_{t-H},\ldots,\xb_t)$. The goal is to minimize the {\it policy regret} \cite{arora2012online} defined as 
\begin{equation*}
    \sum_{t=H+1}^T f_t(\xb_{t-H},\ldots,\xb_t) - \min_{\xb\in\mathcal{X}}\sum_{t=H+1}^t f_t(\xb,\ldots,\xb).
\end{equation*}
It was shown that for convex functions, the upper bound of the policy regret is $O(\sqrt{T})$ \cite{anava2015online}. In this work, we will tackle the distributed online control problem using the distributed variant of OCO with memory, where the individual policy regret of agent $j$ is defined as 
\begin{equation*}
    \sum_{t=H+1}^T\sum_{i=1}^m f_{i,t}(\xb_{j,t-H},\ldots,\xb_{j,t}) - \min_{\xb\in\mathcal{X}}\sum_{t=H+1}^t f_t(\xb,\ldots,\xb),
\end{equation*}
where $f_t = \sum_i f_{i,t}$. For the distributed case, the goal is to make the decision sequence $(\xb_{j,1},\ldots,\xb_{j,T})$ of agent $j$ competitive to the best centralized decision in hindsight under the situation that each agent $j\in [m]$ has access only to its local functions $f_{j,t}$, which necessitates the communication among agents.

\section{Distributed Online Control with Known System Parameters}
In this section, we propose D-DFC, a distributed variant of DFC that generates controllers only with {\it local} information and {\it local} communication. At round $t$, the network performs a distributed online gradient step over the cost functions defined by the surrogate states and actions. All agents first share their current iterates $\{\Mb_{i,t}\}$ with their neighbors based on the network topology and update the iterates using local gradients $\nabla_{\Mb} f_{i,t}(\Mb_{i,t}|\Ab,\Bb,\{\wb\})$. Then, the updated policy parameters $\Mb_{i,t+1}$  will be used to decide the next local action $\ub_{i,t}$ for agent $i$. This process is summarized in Algorithm \ref{alg:D-online control}. There are a number of hyper-parameters, such as $\eta$ and $H$, which must be chosen as functions of time horizon $T$. Under suitable choices of these hyper-parameters, the individual regret can be guaranteed to have an upper bound that is sub-linear in $T$.

\begin{algorithm}[h]
\caption{Distributed Disturbance Feedback Controller (Known System)}
\label{alg:D-online control}
\begin{algorithmic}[1]
    \STATE {\bfseries Require:} number of agents $m$, doubly stochastic matrix $\Pb\in \mathrm{R}^{m\times m}$, parameters $\kappa, \gamma,\eta, H$, time horizon $T$.

    \STATE  Define $\mathcal{M}=\Big\{\Mb=\{\Mb^{[0]},\ldots,\Mb^{[H-1]}\}:\norm{\Mb^{[i]}}\leq 2\kappa^3(1-\gamma)^i,~\text{for}~i=0,\ldots,H-1\Big\}$.
   
    \STATE {\bf Initialize:} $\forall i\in[m]$, randomly generate the same $\Mb_{i,1}\in\mathcal{M}$ for all $i$. 
  
    \FOR{$t=1,2,\ldots,T$}
        \FOR{$i=1,2,\ldots,m$}
            \STATE Determine the control $\ub^{\Kb}_{i,t}$:
            \begin{equation*}
                \ub^{\Kb}_{i,t} = -\Kb\xb^{\Kb}_{i,t} + \sum_{j=1}^H \Mb_{i,t}^{[j-1]}\wb_{t-j}.
            \end{equation*}
            \STATE Observe $\xb^{\Kb}_{i,t+1}$ and compute $\wb_{t}=\Ab\xb^{\Kb}_{i,t} - \Bb\ub^{\Kb}_{i,t}$.                
            
            \STATE Compute $\widehat{\Mb}_{i,t+1} = \sum_{j=1}^m [\Pb]_{ji}\Mb_{j,t}-\eta\nabla_{\Mb} f_{i,t}(\Mb_{i,t}|\Ab,\Bb,\{\wb\})$.

            \STATE Compute $\Mb_{i,t+1} = \Pi_{\mathcal{M}}(\widehat{\Mb}_{i,t+1})$.
            
        \ENDFOR
   \ENDFOR
\end{algorithmic}
\end{algorithm}

\begin{remark}
Line 8 of Algorithm \ref{alg:D-online control} involves the communication between agents. We reiterate that though the linear combination of iterates is summed from $1$ to $m$, only the information from actual neighbors is taken into consideration as $[\Pb]_{ji} = 0$ if agents $i$ and $j$ are not neighbors. As such, the algorithm indeed runs with local information and in a distributed fashion.
\end{remark}

We adhere to the following standard assumptions for the analysis of online control \cite{agarwal2019online}\footnote{Without loss of generality, we assume constants $\kappa,W,G_c > 1$ as they are upper bounds.}:
\begin{assumption}\label{A1}
    The matrices governing the dynamics are bounded, i.e., $\norm{\Ab},\norm{\Bb}\leq \kappa$. The noise $\wb_{t}$ evolves adversarially and is bounded, i.e., $\norm{\wb_{t}}\leq W$.
\end{assumption}
\begin{assumption}\label{A2}
    The cost $c_{i,t}(\xb,\ub)$ is convex. Furthermore, as long as $\norm{\xb},\norm{\ub}\leq D$, then $\nabla_{(\xb,\ub)} c_{i,t}(\xb,\ub)\leq G_c D$,\; where $G_c$ is a constant.
\end{assumption}
\begin{assumption}\label{A3}
    All agents in the network have the common knowledge of a linear controller $\Kb$, which is $(\kappa,\gamma)$-strongly stable with respect to the underlying system $(\Ab,\Bb)$.
\end{assumption}
The assumption on the knowledge of a common linear controller $\Kb$ is mainly for the ease of analysis. If  agents have access to different controllers, the result remains the same in terms of regret order (as long as all those controllers are strongly stable). Furthermore, this assumption is commonly adopted in the literature as when the system parameters $(\Ab,\Bb)$ are known, we can extract a stabilizing controller from a SDP formulation \cite{cohen2018online,chen2021black}. Also, for the unknown dynamics case, a stabilizing controller can be efficiently computed  from the system estimates without affecting the order of the final regret bound \cite{chen2021black, lale2022reinforcement}.

We now present the main result of this section in the following theorem.
\begin{theorem}\label{T: Regret bound (known case)} Suppose Assumptions \ref{A1}, \ref{A2}, and \ref{A3} hold. By running Algorithm \ref{alg:D-online control} with $\eta = \Theta(\frac{1}{\sqrt{T}})$ and $H = \Theta(\log T)$, we have that the individual regret of any agent $j\in [m]$ is upper bounded as follows,
\begin{equation*}
    J_T^j(\mathcal{A}_1) - \min_{\Kb\in\mathcal{K}}J_T(\Kb) = O(\frac{\sqrt{T}\log T}{(1-\beta)^2}),
\end{equation*}
\end{theorem}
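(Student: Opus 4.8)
The plan is to split the individual regret of agent $j$ into three pieces and bound each. Fix a comparator $\Kb^\star\in\argmin_{\Kb\in\mathcal{K}}J_T(\Kb)$, write $f_t=\sum_i f_{i,t}$ and $F_t(\Mb)=f_t(\Mb,\ldots,\Mb)$, and (absorbing the $O(H)$ cost of the first $H$ rounds, which is harmless because strong stability of $\Kb$ keeps all states uniformly bounded) decompose
$$
J_T^j(\mathcal{A}_1)-J_T(\Kb^\star)=\mathrm{(I)}+\mathrm{(II)}+\mathrm{(III)},
$$
where $\mathrm{(I)}=\sum_t\big(c_t(\xb_{j,t},\ub_{j,t})-f_t(\Mb_{j,t-H:t})\big)$ is the surrogate/truncation error along agent $j$'s closed-loop run, $\mathrm{(II)}=\sum_t f_t(\Mb_{j,t-H:t})-\min_{\Mb\in\mathcal{M}}\sum_t F_t(\Mb)$ is the regret of distributed OCO with memory, and $\mathrm{(III)}=\min_{\Mb\in\mathcal{M}}\sum_t F_t(\Mb)-J_T(\Kb^\star)$ is the error of approximating the best linear policy by a DFC in $\mathcal{M}$.

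Parts $\mathrm{(I)}$ and $\mathrm{(III)}$ are essentially inherited from the centralized analysis of \cite{agarwal2019online}, since they involve only a single closed-loop trajectory and a single fixed policy and never see the network. Strong stability of $\Kb$ yields a uniform bound $D=\mathrm{poly}(\kappa,W,1/\gamma)$ on all (surrogate) states and actions produced by any $\Mb\in\mathcal{M}$; the state--surrogate gap is governed by the discarded term $(\Ab-\Bb\Kb)^{H+1}\xb_{j,t-H}$ and is therefore $O(\kappa^2(1-\gamma)^H D)$, so with $H=\Theta(\log T)$ and Assumption~\ref{A2} we get $\mathrm{(I)}=O(1)$. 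Likewise, the DFC approximation result of \cite{agarwal2019online} produces some $\Mb\in\mathcal{M}$ matching the cost of $\Kb^\star$ up to $O\big(T(1-\gamma)^H\mathrm{poly}(\kappa)\big)=O(1)$, which bounds $\mathrm{(III)}$.

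The new and hardest part is $\mathrm{(II)}$: the usual reduction from OCO-with-memory to standard OCO does not directly apply, because each agent runs D-OGD on its own surrogate cost $f_{i,t}$, whereas regret is measured against the best fixed $\Mb$ for the global cost $f_t=\sum_i f_{i,t}$. I would introduce the network average $\bar\Mb_t=\tfrac1m\sum_i\Mb_{i,t}$ and split $\mathrm{(II)}$ into a memory term $\sum_t\big(f_t(\Mb_{j,t-H:t})-F_t(\Mb_{j,t})\big)$, a consensus term $\sum_t\big(F_t(\Mb_{j,t})-F_t(\bar\Mb_t)\big)$, and the averaged-iterate regret $\sum_t F_t(\bar\Mb_t)-\min_\Mb\sum_t F_t(\Mb)$. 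The ingredients are: (a) Lipschitz and bounded-gradient estimates for $f_{i,t},f_t,F_t$ on $\mathcal{M}$, with constants polynomial in $\kappa,W,G_c,1/\gamma$ and --- crucially --- independent of $H$, exploiting the geometric decay $\norm{\Mb^{[i]}}\le 2\kappa^3(1-\gamma)^i$ built into $\mathcal{M}$, and with the Lipschitz constant of $f_t$ in its $k$-th slot decaying like $(1-\gamma)^k$; (b) a network-disagreement bound $\norm{\Mb_{i,t}-\bar\Mb_t}=O(\eta\sqrt m/(1-\beta))$ obtained by unrolling the D-OGD update, using the mixing estimate $\sum_j|[\Pb^k]_{ji}-1/m|\le\sqrt m\,\beta^k$ and nonexpansiveness of $\Pi_{\mathcal{M}}$, which also yields the per-step movement bound $\norm{\Mb_{j,t+1}-\Mb_{j,t}}=O(\eta\sqrt m/(1-\beta))$. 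Combining (a) and (b), the memory and consensus terms are each $O(\eta T/(1-\beta))$ (the window sum converging thanks to the decaying per-slot Lipschitz constants), while a telescoping argument on $\norm{\bar\Mb_{t+1}-\Mb^\star}^2$ --- with the subtlety that $\bar\Mb_{t+1}\ne\Pi_{\mathcal{M}}(\bar\Mb_t-\eta\,\overline{\nabla f_t})$, the mismatch being controlled again by the disagreement bound, which is where a second factor $1/(1-\beta)$ enters --- bounds the averaged-iterate regret by $O\big(1/\eta+\eta T/(1-\beta)^2\big)$.

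Setting $\eta=\Theta(1/\sqrt T)$ and $H=\Theta(\log T)$ and summing $\mathrm{(I)}$--$\mathrm{(III)}$ then gives the claimed $O\big(\sqrt T\log T/(1-\beta)^2\big)$; the $\log T$ reflects the memory window $H$, which enters the boundary rounds and the memory/gradient terms at worst linearly. I expect the bulk of the effort to lie in (i) propagating the disagreement estimate through the projected-OGD telescoping so that the $1/(1-\beta)^2$ dependence comes out cleanly, and (ii) verifying that every Lipschitz and gradient constant is genuinely $H$-free, which is what keeps the final rate at $\sqrt T\log T$ rather than $\sqrt T\,\mathrm{polylog}\,T$.
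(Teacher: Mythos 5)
Your decomposition and all the key ingredients (geometric truncation of the surrogate gap, DFC approximation of the linear comparator in $\mathcal{M}$, per-slot Lipschitz constants for the memory term, mixing-based disagreement bounds of order $\eta\sqrt m/(1-\beta)$, and projected-OGD telescoping) coincide with the paper's proof, which uses the same split (its two comparator terms together form your (III)) and establishes your (II) as its Theorem~\ref{T: Regret bound for OCO with memory}; the only cosmetic difference is that the paper telescopes $\norm{\Mb_{i,t}-\Mb^*}^2$ per agent with explicit network correction terms rather than anchoring at the averaged iterate $\bar{\Mb}_t$. One caveat on your point (ii): the paper's constants are \emph{not} $H$-free --- its $D$, gradient bound, and memory term carry $H$, $\sqrt H$, and $H^3$ factors respectively, so its Theorem~\ref{T: Regret bound for OCO with memory} tracked literally yields $\sqrt T\,\mathrm{poly}(\log T)$ --- hence your proposal is no worse off on this score and the approach is otherwise the same.
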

where $\mathcal{K}$ denotes the class of $(\kappa,\gamma)$-strongly stable controllers.

From Theorem \ref{T: Regret bound (known case)}, we can see that the regret bound of distributed DFC recovers that of the centralized DFC in \cite{agarwal2019online} in terms of the order of $T$. Intuitively, two main components contribute to the regret analysis: the approximation part, where the actual states (actions) are approximated by the surrogate states (actions), and the regret part as a result of distributed OCO with memory. For the first part, it is shown that the approximation gap shrinks geometrically in terms of $H$, which can be mitigated in the final regret bound when we choose a parameter $H$ that scales at least logarithmically with respect to time horizon $T$. The second (dominant) part via the analysis of D-OGD is sub-linear and can be handled optimally when we set $\eta=\Theta(1/\sqrt{T})$. Furthermore, the effect of the network topology is  characterized by the dependence on $\beta$, the second largest singular value of $\Pb$. As we can see in the regret bound, when the network is more connected (i.e., $\beta$ is smaller), the resulting regret bound is tighter.

\section{Distributed Online Control with Unknown System Parameters}
In this section, we consider the case where the system parameters are unknown. To ensure a sub-linear regret bound for this scenario, we take an explore-then-commit approach such that in the first $T_0$ iterations of Algorithm \ref{alg:D-online control (unknown system)}, all agents jointly estimate the system parameters in a distributed fashion using Algorithm \ref{alg:parameter estimation}, which is a distributed variant of the system identification proposed in \cite{hazan2020nonstochastic}. 
In particular, it can be shown that by intentionally adding an i.i.d. noise sequence to a linear controller, we can estimate and extract a quantity that is related to system parameters (using line 8 in Algorithm \ref{alg:parameter estimation}). We further perform the communication step in line 10 to improve the statistical efficiency.

After the estimation phase, at every round each agent estimates the current noise using its own system estimate (line 9) and applies  D-DFC based on its past estimated noises. As opposed to Algorithm \ref{alg:D-online control}, the distributed gradient step of each agent is applied over the cost defined with respect to different parameters since system and noise estimates are different across agents. Evidently, the summation of these local surrogate costs $\sum_i f_{i,t}(\cdot|\widehat{\Ab}_i, \widehat{\Bb}_i, \{\hat{\wb}_i\})$ is different to the centralized cost of $f_t(\cdot) = \sum_i f_{i,t}(\cdot|\Ab,\Bb,\{\wb\})$. Therefore, the distributed problem in the unknown dynamics case does not directly fit into the framework of OCO with memory as the centralized case (e.g., \cite{agarwal2019online,hazan2020nonstochastic}). To tackle this issue, we provide the analysis that quantifies the effect of different parameters on the evolution of the (surrogate) states. Finally, with suitable choices for $\eta$ and $H$, a sub-linear regret bound is guaranteed for the individual regret.

\begin{algorithm}[h]
\caption{System Identification}
\label{alg:parameter estimation}
\begin{algorithmic}[1]
    \STATE {\bfseries Require:} number of agents $m$, doubly stochastic matrix $\Pb\in \mathrm{R}^{m\times m}$, time horizons $T_{collect}$ and $T_{exchange}$.
  
    \FOR{$t=1,2,\ldots,T_{collect}$}
        \FOR{$i=1,2,\ldots,m$}
            \STATE Record the observed state $\xb_{i,t}$.
            
            \STATE Determine the control $\ub_{i,t}$:
            \begin{equation*}
                \ub_{i,t} = -\Kb\xb_{i,t} + \xi_{i,t} \text{where } \xi_{i,t}\sim_{i.i.d} \{\pm 1\}^{d_2}.
            \end{equation*}   
        \ENDFOR
   \ENDFOR

    \STATE Define 
    \begin{equation*}
    \begin{split}
        &\Nb^{T_{collect}}_{i,k-1} = \frac{1}{T_{collect} - (q+1)}\sum_{t=1}^{T_{collect}-(q+1)}\xb_{i,t+k}\xi^{\top}_{i,t},\\
        &\forall i\in[m] \text{ and } k\in[q+1].
    \end{split}
    \end{equation*}

    \FOR{$t=T_{collect}+1,\ldots,T_{collect} + T_{exchange}$}
    \STATE
        \begin{equation*}
            \Nb^{t}_{i,k-1} = \sum_{j=1}^m [\Pb]_{ji}\Nb^{t-1}_{j,k-1},\;\forall i\in[m] \text{ and } k\in[q+1].
        \end{equation*}    
    \ENDFOR

    \STATE let $T_0 = T_{collect} + T_{exchange}$.

    \STATE Define $\Cb_{i,0} = \left[\Nb^{T_0}_{i,0},\ldots,\Nb^{T_0}_{i,q-1}\right]$ and $\Cb_{i,1} = \left[\Nb^{T_0}_{i,1},\ldots,\Nb^{T_0}_{i,q}\right],\;\forall i\in[m]$, and return $\widehat{\Ab}_i$ and $\widehat{\Bb}_i$ as 
    \begin{equation*}
    \resizebox{\linewidth}{!}
    {
    \begin{minipage}{\linewidth}
    \begin{align*}
        \widehat{\Bb}_i = \Nb^{T_0}_{i,0},\; \widehat{\Ab}^{\prime}_i = \Cb_{i,1}\Cb_{i,0}^{\top}(\Cb_{i,0}\Cb_{i,0}^{\top})^{-1},\; \widehat{\Ab}_i = \widehat{\Ab}^{\prime}_i + \widehat{\Bb}_i\Kb. 
    \end{align*}
    \end{minipage}
    }
    \end{equation*}

\end{algorithmic}
\end{algorithm}
\begin{algorithm}[h]
\caption{Distributed Disturbance Feedback Controller (Unknown System)}
\label{alg:D-online control (unknown system)}
\begin{algorithmic}[1]
    \STATE {\bfseries Require:} number of agents $m$, doubly stochastic matrix $\Pb\in \mathrm{R}^{m\times m}$, parameters $\kappa, \gamma,\eta, H$, time horizon $T$.
    \STATE Define $\mathcal{M}=\Big\{\Mb=\{\Mb^{[0]},\ldots,\Mb^{[H-1]}\}:\norm{\Mb^{[i]}}\leq 2\kappa^3(1-\gamma)^i,~\text{for}~i=0,\ldots,H-1\Big\}$.

    \STATE Specify $T_{collect}$ and $T_{exchange}$ based on given hyper-parameters and run Algorithm \ref{alg:parameter estimation} to learn the system parameters $\{\widehat{\Ab}_i, \widehat{\Bb}_i\}_{i\in[m]}$ in a distributed fashion.

    \STATE let $T_0 = T_{collect} + T_{exchange}$.
    \STATE {\bf Initialize:} $\forall i\in[m]$, randomly choose the same $\Mb_{i,T_0+1}\in\mathcal{M}$ for all $i\in[m]$ and define $\hat{\wb}_{i,t}=0$ for $t\leq T_0$.
  
    \FOR{$t=T_0+1,\ldots,T$}
        \FOR{$i=1,2,\ldots,m$}
            \STATE Determine the control $\ub_{i,t}$:
            \begin{equation*}
                \ub_{i,t} = -\Kb\xb_{i,t} + \sum_{j=1}^H \Mb_{i,t}^{[j-1]}\hat{\wb}_{i,t-j}.
            \end{equation*}
            \STATE Observe $\xb_{i,t+1}$ and compute $\hat{\wb}_{i,t}=\widehat{\Ab}_i\xb_{i,t} - \widehat{\Bb}_i\ub_{i,t}$.                
            \STATE Compute $\widehat{\Mb}_{i,t+1} = \sum_{j=1}^m [\Pb]_{ji}\Mb_{j,t}-\eta\nabla f_{i,t}(\Mb_{i,t}|\widehat{\Ab}_i, \widehat{\Bb}_i, \{\hat{\wb}_i\})$.

            \STATE Compute $\Mb_{i,t+1} = \Pi_{\mathcal{M}}(\widehat{\Mb}_{i,t+1})$.
            
        \ENDFOR
   \ENDFOR
\end{algorithmic}
\end{algorithm}

\begin{assumption}\label{A4}
    The linear system $(\Ab - \Bb\Kb, \Bb)$ is $(q,\kappa)$-strongly controllable.
\end{assumption}

\begin{theorem}\label{T: Regret bound (unknown case)} Suppose Assumptions \ref{A1}, \ref{A2}, \ref{A3} and \ref{A4} hold. By running Algorithm \ref{alg:D-online control (unknown system)} with $\eta = \Theta(\frac{1}{\sqrt{T}})$, $H = \Theta(\log T)$, $T_{collect} = \Theta(T^{2/3})$, $T_{exchange} = \Theta(\log T)$, we have with probability at least $(1-\delta)$ that the individual regret of any agent $j\in [m]$ is upper bounded as follows,
\begin{equation*}
 \begin{split}
     &J_T^j(\mathcal{A}_3) - \min_{\Kb\in\mathcal{K}}J_T(\Kb)\\ = &O\Big(\big(T^{2/3}\sqrt{\log \delta^{-1}} + \frac{\sqrt{T}}{(1-\beta)^2}\big)\text{poly}(\log T)\Big),
 \end{split}
\end{equation*}
\end{theorem}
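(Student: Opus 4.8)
The plan is to run the explore-then-commit bookkeeping. Fix agent $j$ and a hindsight-optimal controller $\Kb^\star\in\mathcal{K}$, and split
\begin{equation*}
\text{Regret}_T^j(\mathcal{A}_3)\le O(T_0)+\sum_{t=T_0+1}^{T}\Big(c_t(\xb^{\mathcal{A}_3}_{j,t},\ub^{\mathcal{A}_3}_{j,t})-c_t(\xb^{\Kb^\star}_{t},\ub^{\Kb^\star}_{t})\Big),
\end{equation*}
where the $O(T_0)$ term absorbs the first $T_0=T_{collect}+T_{exchange}$ rounds: during exploration the control $\ub_{i,t}=-\Kb\xb_{i,t}+\xi_{i,t}$ is a $(\kappa,\gamma)$-strongly stable linear policy (Assumption \ref{A3}) perturbed by the bounded signals $\xi_{i,t}\in\{\pm1\}^{d_2}$ and $\wb_t$ (Assumption \ref{A1}), so $\norm{\xb_{i,t}},\norm{\ub_{i,t}}$ stay $O(\text{poly}(\kappa,\gamma^{-1},W,\sqrt{d_2}))$ and, by Assumption \ref{A2}, each $c_{i,t}$ there is bounded by a constant; the same holds for the trajectory of $\Kb^\star$. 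With $T_{collect}=\Theta(T^{2/3})$ and $T_{exchange}=\Theta(\log T)$ this contributes $O(T^{2/3})$.

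\textbf{Step 1 (distributed system identification).} I would first show that Algorithm \ref{alg:parameter estimation} returns estimates with $\max_i\max\{\norm{\widehat{\Ab}_i-\Ab},\norm{\widehat{\Bb}_i-\Bb}\}\le\epsilon$ with probability $\ge 1-\delta$, where $\epsilon=\tilde O(T^{-1/3}\sqrt{\log\delta^{-1}})$. Writing $\xb_{i,t+1}=(\Ab-\Bb\Kb)\xb_{i,t}+\Bb\xi_{i,t}+\wb_t$, each local statistic $\Nb^{T_{collect}}_{i,k-1}$ is an empirical average of $\xb_{i,t+k}\xi_{i,t}^{\top}$ whose conditional expectation over the fresh i.i.d. signs (independent of $\wb_t$ and of the past state) equals the relevant block of the controllability matrix of $(\Ab-\Bb\Kb,\Bb)$; this is the estimator of \cite{hazan2020nonstochastic}, and a matrix-martingale/Azuma bound gives $\norm{\Nb^{T_{collect}}_{i,k-1}-\E[\cdot]}=\tilde O(\sqrt{\log(m\delta^{-1})/T_{collect}})$ for all $i\in[m]$, $k\in[q{+}1]$. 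The consensus rounds (line 10) move each local average to within $\sqrt{m}\beta^{T_{exchange}}=O(\text{poly}(T^{-1}))$ of the network average by the geometric mixing bound on $\Pb$, which is negligible. Plugging these concentrated statistics into the least-squares step and using strong controllability (Assumption \ref{A4}) to keep $\Cb_{i,0}\Cb_{i,0}^{\top}$ uniformly well conditioned yields the stated $\epsilon$.

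\textbf{Step 2 (exploitation regret).} On rounds $t>T_0$ I would mimic the decomposition behind Theorem \ref{T: Regret bound (known case)}, carrying $\epsilon$ through every term. (a) The noise-estimate error $\hat{\wb}_{j,t}-\wb_t=(\Ab-\widehat{\Ab}_j)\xb_{j,t}+(\Bb-\widehat{\Bb}_j)\ub_{j,t}$ is $O(\epsilon)$ (bounded state--action) and is damped by the contracting map $\Ab-\Bb\Kb$, so agent $j$'s true closed-loop exploitation cost differs from $\sum_{t>T_0}f_t(\Mb_{j,t-H:t}|\Ab,\Bb,\{\wb\})$ by $\tilde O(T\epsilon)$ plus the standard surrogate-state truncation error $O(T(1-\gamma)^{H})$, which vanishes for $H=\Theta(\log T)$. (b) A perturbation lemma — stating that replacing $(\Ab,\Bb,\{\wb\})$ by $(\widehat{\Ab}_i,\widehat{\Bb}_i,\{\hat{\wb}_i\})$ changes the surrogate cost, uniformly over $\Mb\in\mathcal{M}$, by $O(\epsilon\cdot\text{poly}(\log T))$ — lets us replace this by $\sum_{t>T_0}\sum_i f_{i,t}(\Mb_{j,t-H:t}|\widehat{\Ab}_i,\widehat{\Bb}_i,\{\hat{\wb}_i\})$, again at cost $\tilde O(T\epsilon)$. (c) Treating the \emph{perturbed} sum $\tilde f_t:=\sum_i f_{i,t}(\cdot|\widehat{\Ab}_i,\widehat{\Bb}_i,\{\hat{\wb}_i\})$ as the effective objective — still convex, with bounded gradients and the same bounded-memory Lipschitz structure — the D-OGD-with-memory analysis (consensus, memory, and iterate-drift terms exactly as in the proof of Theorem \ref{T: Regret bound (known case)}) bounds $\sum_{t>T_0}\tilde f_t(\Mb_{j,t-H:t})-\min_{\Mb\in\mathcal{M}}\sum_{t>T_0}\tilde f_t(\Mb)$ by $O(\sqrt{T}\log T/(1-\beta)^2)$. (d) Applying the perturbation lemma once more at the minimizer, $\min_{\Mb}\sum_{t>T_0}\tilde f_t(\Mb)\le\min_{\Mb}\sum_{t>T_0}f_t(\Mb|\Ab,\Bb,\{\wb\})+\tilde O(T\epsilon)$, and the DFC-approximates-linear-policy argument of \cite{agarwal2019online} (plus the $O(T_0)$ cost of restricting the $\Kb$-benchmark to the window $t>T_0$) upper-bounds the latter by $\min_{\Kb\in\mathcal{K}}J_T(\Kb)+O(\sqrt{T}\log T)+O(T^{2/3})$. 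Chaining (a)--(d), adding the $O(T^{2/3})$ exploration term, and substituting $T\epsilon=\tilde O(T^{2/3}\sqrt{\log\delta^{-1}})$ gives the claimed bound.

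\textbf{Main obstacle.} The crux is Step 2(c) with its companions (b) and (d): because each agent descends on $\sum_i f_{i,t}(\cdot|\widehat{\Ab}_i,\widehat{\Bb}_i,\{\hat{\wb}_i\})$ rather than on the true global surrogate cost, the setting is not literally distributed OCO with memory, so one must (i) re-derive the D-OGD regret bound verbatim for an arbitrary sum-of-convex-with-memory objective, checking that the $(1-\beta)^{-1}$ consensus terms, the $H$-dependent memory terms, and the iterate-drift terms all survive; and (ii) prove the uniform perturbation lemma with an \emph{explicit} polynomial dependence on $\kappa,\gamma^{-1},W,H$, so that the accumulated discrepancy stays $\tilde O(T\epsilon)$ and does not blow up with $H$ or with the number of blocks of $\Mb$. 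It is precisely this $\tilde O(T\epsilon)$ term, balanced against the $O(T_{collect})$ exploration cost, that forces $T_{collect}=\Theta(T^{2/3})$ and hence the $T^{2/3}$ rate. Tracking the $\sqrt{\log\delta^{-1}}$ factor cleanly through the distributed identification concentration bound in Step 1 is a secondary, routine obstacle.
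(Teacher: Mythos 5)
Your proposal is correct and follows essentially the same route as the paper: explore-then-commit with an $O(T_0)$ exploration cost, distributed Hazan-style identification via martingale concentration plus consensus mixing to get $\epsilon=\tilde O(T_{collect}^{-1/2}\sqrt{\log\delta^{-1}})$, a uniform perturbation lemma quantifying how heterogeneous $(\widehat{\Ab}_i,\widehat{\Bb}_i,\{\hat{\wb}_i\})$ shift the surrogate costs (the paper's Lemma 17), and a D-OGD-with-memory bound on the resulting sum of mismatched local objectives, balanced to give $T^{2/3}$. Your identified ``main obstacle'' is precisely the issue the paper resolves with Lemma 17 and Theorem 19, so the sketch matches the paper's argument in both decomposition and key lemmas.
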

where $\mathcal{K}$ denotes the class of $(\kappa,\gamma)$-strongly stable controllers.

Compared to Theorem \ref{T: Regret bound (known case)}, the increased regret order is due to the trade-off between exploration and exploitation, where the optimal rate is achieved when $T_{collect}$ is set to be $\Theta(T^{2/3})$. On the other hand, the above regret bound recovers the result of centralized case with unknown systems \cite{hazan2020nonstochastic} up to log factors. As mentioned earlier, the analysis of D-OL with memory is not directly applicable due to the fact that the local surrogate function is built on individual agent estimates, and this gap results in the extra poly-log factors in the final bound. The detailed derivation is provided in the supplementary material, but we provide a proof sketch here. To bound the regret, we need to quantify the contributions of three main components: {\bf (i) Approximation of the actual state (action) with the surrogate state (action):} This gap is bounded using the strong stability of $\Kb$. Similar analysis is also applied for the approximation of the comparator. {\bf (ii) The impact of surrogate functions built on individual estimates:} In the supplementary (Lemma 17), we show that the function gap can be bounded by the order of the estimate difference across agents (up to log factors). {\bf (iii) Distributed OCO with memory:} After handling the first two parts, the control problem can be transformed into D-OCO with memory, where we extend the analysis of the centralized case \cite{anava2015online} to the distributed setup and derive the final regret bound.


\section*{Conclusion}
In this work, we considered the distributed online control problem with potentially unknown LTI systems and time-varying convex costs. For the known dynamics case, we proposed an algorithm based on distributed DFC, which transforms the problem into a distributed OCO with memory, and the resulting regret bound is $O(\sqrt{T}\log T)$. For the unknown dynamics case, we provided a decentralized two-phase algorithm, which estimates the underlying dynamics and then applies the first algorithm using the estimates obtained from the system identification. We established a regret bound of $O(T^{2/3}\text{poly}(\log T))$ for this setup. For future work, possible extensions include the analysis of the {\it dynamic} performance criterion (i.e., when the comparator sequence is time-varying) and investigating distributed partially observable LTI systems.


\bibliographystyle{IEEEtran}
\bibliography{references}

\newpage
\onecolumn
\section{Supplementary}
\subsection{Approximation based on DFC}
As mentioned in the main context, the reparameterization of applying DFC with a finite length of past noises is the key step for fitting the online control problem into the online learning framework. In this section, we provide quantified approximation errors when the derived state (action) is approximated by the surrogate state (action) (Lemma \ref{L: Upper bounds of (surrogate)states and (surrogate)actions and the difference}) and when the comparator of a linear policy is approximated by a time-invariant DFC (Lemma \ref{L: Approximate a linear policy using time-invariant DFC policy}). 

\begin{lemma}\label{L: Upper bound of the disturbance-state transfer matrix}(Upper bound of the disturbance-state transfer matrix) Let $\Kb$ be $(\kappa',\gamma')$ strongly stable w.r.t. the system $(\Ab^{\prime},\Bb^{\prime})$ where $\norm{\Bb^{\prime}}\leq \kappa_{B^{\prime}}$. Suppose $\{\Mb_t\}$ is a sequence of parameters such that $\Mb_t\in\mathcal{M}=\{\Mb:\Mb^{[i]}\leq C(1-\gamma)^i, \gamma\geq\gamma'\},\;\forall t$, then we have $\forall i,t$,
    \begin{equation*}
        \norm{\Psi^{\Kb,h}_{t,i}(\Mb_{t-h:t}|\Ab^{\prime},\Bb^{\prime})} \leq (\kappa')^2(1-\gamma')^i \mathbb{1}_{i\leq h} + H \kappa_{B^{\prime}}(\kappa')^2C(1-\gamma')^{i-1}.
    \end{equation*}
    \end{lemma}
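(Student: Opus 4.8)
The plan is to bound $\norm{\Psi^{\Kb,h}_{t,i}(\Mb_{t-h:t}|\Ab',\Bb')}$ by directly invoking the definition of the disturbance-state transfer matrix and applying the triangle inequality to its two constituent terms. Recall that
\[
\Psi^{\Kb,h}_{t,i}(\Mb_{t-h:t}|\Ab',\Bb') = (\Ab' - \Bb'\Kb)^i \mathbb{1}_{i\leq h} + \sum_{j=0}^h (\Ab' - \Bb'\Kb)^j\Bb'\Mb^{[i-j-1]}_{t-j}\mathbb{1}_{i-j\in [1,H]}.
\]
The first step is to control powers of $\Ab' - \Bb'\Kb$. Since $\Kb$ is $(\kappa',\gamma')$-strongly stable w.r.t.\ $(\Ab',\Bb')$, there exist $\Hb,\Lb$ with $\Ab'-\Bb'\Kb = \Hb\Lb\Hb^{-1}$, $\norm{\Lb}\leq 1-\gamma'$, and $\norm{\Hb},\norm{\Hb^{-1}}\leq \kappa'$. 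Hence for any integer $\ell\geq 0$, $\norm{(\Ab'-\Bb'\Kb)^\ell} = \norm{\Hb\Lb^\ell\Hb^{-1}} \leq (\kappa')^2(1-\gamma')^\ell$. Applying this to the first term immediately yields the contribution $(\kappa')^2(1-\gamma')^i\mathbb{1}_{i\leq h}$.

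The second step is to bound the summation term. Taking norms inside the sum and using submultiplicativity, each summand with $i-j\in[1,H]$ is bounded by
\[
\norm{(\Ab'-\Bb'\Kb)^j}\,\norm{\Bb'}\,\norm{\Mb^{[i-j-1]}_{t-j}} \leq (\kappa')^2(1-\gamma')^j \cdot \kappa_{B'} \cdot C(1-\gamma)^{i-j-1}.
\]
Since $\gamma\geq \gamma'$, we have $(1-\gamma)^{i-j-1}\leq (1-\gamma')^{i-j-1}$, so the summand is at most $\kappa_{B'}(\kappa')^2 C (1-\gamma')^{i-1}$ — note that $(1-\gamma')^j(1-\gamma')^{i-j-1} = (1-\gamma')^{i-1}$, so the dependence on $j$ cancels, which is the key simplification. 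The indicator $\mathbb{1}_{i-j\in[1,H]}$ restricts $j$ to at most $H$ distinct values (namely $j\in\{i-H,\dots,i-1\}\cap[0,h]$), so the sum over $j$ contributes at most a factor of $H$. This gives the bound $H\kappa_{B'}(\kappa')^2 C(1-\gamma')^{i-1}$ for the second term. Adding the two contributions completes the proof.

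I do not anticipate a genuine obstacle here; the lemma is a routine norm estimate. The only points requiring minor care are: (i) correctly counting that at most $H$ terms in the sum are nonzero (rather than $h+1$ terms), which is what prevents the bound from degrading with $h$; and (ii) using $\gamma\geq\gamma'$ to replace $(1-\gamma)$ by $(1-\gamma')$ so that the geometric factors telescope cleanly to $(1-\gamma')^{i-1}$ independent of the summation index. One should also handle the edge case $i=0$, where $i-j-1<0$ forces the indicator to vanish and the second term is trivially zero (and $(1-\gamma')^{-1}$ never actually appears), consistent with the stated bound.
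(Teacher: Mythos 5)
Your proposal is correct and follows essentially the same route as the paper's proof: triangle inequality on the two terms of the transfer matrix, the strong-stability bound $\norm{(\Ab'-\Bb'\Kb)^j}\leq(\kappa')^2(1-\gamma')^j$, the replacement $(1-\gamma)^{i-j-1}\leq(1-\gamma')^{i-j-1}$ so the geometric factors combine to $(1-\gamma')^{i-1}$, and the count of at most $H$ nonzero summands. Your added remarks on the $H$-term count and the $i=0$ edge case are details the paper leaves implicit, but the argument is the same.
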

    \begin{proof}
    \begin{equation*}
    \begin{split}
        \norm{\Psi^{\Kb,h}_{t,i}(\Mb_{t-h:t}|\Ab^{\prime},\Bb^{\prime})} &= \norm{(\Ab^{\prime} - \Bb^{\prime}\Kb)^i \mathbb{1}_{i\leq h} + \sum_{j=0}^h (\Ab^{\prime} - \Bb^{\prime}\Kb)^j\Bb^{\prime}\Mb^{[i-j-1]}_{t-j}\mathbb{1}_{i-j\in [1,H]}}\\
        &\leq (\kappa')^2(1-\gamma')^i \mathbb{1}_{i\leq h} + \sum_{j=0}^h (\kappa')^2(1-\gamma')^j\kappa_{B^{\prime}} C(1-\gamma)^{i-j-1}\mathbb{1}_{i-j\in [1,H]}\\
        &\leq (\kappa')^2(1-\gamma')^i \mathbb{1}_{i\leq h} + H \kappa_{B^{\prime}}(\kappa')^2C(1-\gamma')^{i-1}.
    \end{split}
    \end{equation*}
    \end{proof}
    
    \begin{lemma}\label{L: Upper bounds of (surrogate)states and (surrogate)actions and the difference}(Upper bounds of (surrogate)states and (surrogate)actions and the difference) 
    Let $\Kb$ be $(\kappa',\gamma')$ strongly stable w.r.t. the system $(\Ab^{\prime},\Bb^{\prime})$ where $\norm{\Bb^{\prime}}\leq \kappa_{B^{\prime}}$. For a linear system $\{\xb\}$ (assume $\xb_1=0$) evolving based on $(\Ab^{\prime}, \Bb^{\prime}, \{\wb^{\prime}\})$ where $\wb^{\prime}_{t}\leq W^{\prime}$. Suppose the control is chosen based on DFC with $\Mb_t\in\mathcal{M}=\{\Mb:\Mb^{[i]}\leq C(1-\gamma)^i, \gamma\geq\gamma'\},\;\forall t$, then we have for all $t$,
    \begin{equation*}
        \norm{\xb_t}, \norm{\yb^{\Kb}_t}, \norm{\ub^{\Kb}_t}, \norm{\vb^{\Kb}_t} \leq D        
    \end{equation*}
    \begin{equation*}
        \norm{\xb_t - \yb^{\Kb}_t} \leq (\kappa')^2(1-\gamma')^{H+1}D
    \end{equation*}
    \begin{equation*}
        \norm{\ub^{\Kb}_t - \vb^{\Kb}_t} \leq (\kappa')^3(1-\gamma')^{H+1}D,  
    \end{equation*}
    where $D = \frac{W^{\prime}\big((\kappa')^2 + H\kappa_{B^{\prime}}(\kappa')^2C\big)}{\gamma'\big(1-(\kappa')^2(1-\gamma')^{(H+1)}\big)} + \frac{W^{\prime}C}{\gamma}$.
    \end{lemma}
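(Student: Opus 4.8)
The plan is to run everything through the disturbance-state transfer matrix expansion together with the bound of Lemma~\ref{L: Upper bound of the disturbance-state transfer matrix}. The starting point is the algebraic identity obtained by specializing the recursion for $\xb^{\Kb}_{t+1}$ (stated just above the definition of the disturbance-state transfer matrix) to $h=H$ and matching terms with the definition of the surrogate state:
\begin{equation*}
    \xb^{\Kb}_{t+1} = (\Ab^{\prime}-\Bb^{\prime}\Kb)^{H+1}\xb^{\Kb}_{t-H} + \yb^{\Kb}_{t+1},
\end{equation*}
so $\yb^{\Kb}_{t+1}$ is exactly $\xb^{\Kb}_{t+1}$ with the term propagating $\xb^{\Kb}_{t-H}$ dropped. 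From this identity I would first bound the surrogate state: the triangle inequality gives $\norm{\yb^{\Kb}_{t+1}}\leq W^{\prime}\sum_{i=0}^{2H}\norm{\Psi^{\Kb,H}_{t,i}(\Mb_{t-H:t}|\Ab^{\prime},\Bb^{\prime})}$, and plugging in Lemma~\ref{L: Upper bound of the disturbance-state transfer matrix} (applicable since $\Mb_t\in\mathcal{M}$ with $\gamma\geq\gamma^{\prime}$) and summing the resulting geometric series yields $\norm{\yb^{\Kb}_t}\leq E$ for every $t$, where $E:=W^{\prime}\big((\kappa^{\prime})^2+H\kappa_{B^{\prime}}(\kappa^{\prime})^2C\big)/\gamma^{\prime}$; observe that the first summand of $D$ is exactly $E/\big(1-(\kappa^{\prime})^2(1-\gamma^{\prime})^{H+1}\big)$.

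The true state is then bounded by induction on $t$. The base case is $\xb^{\Kb}_1=0$, and for $2\leq t\leq H+1$ one expands $\xb^{\Kb}_t$ directly from $\xb^{\Kb}_1=0$ (taking $h=t-2$ in the recursion so the boundary term vanishes), giving $\norm{\xb^{\Kb}_t}\leq E$ by the same geometric estimate. For the inductive step, combining the identity above with $\norm{(\Ab^{\prime}-\Bb^{\prime}\Kb)^{H+1}}=\norm{\Hb\Lb^{H+1}\Hb^{-1}}\leq(\kappa^{\prime})^2(1-\gamma^{\prime})^{H+1}$, the bound $\norm{\yb^{\Kb}_{t+1}}\leq E$, and the inductive hypothesis gives
\begin{equation*}
    \norm{\xb^{\Kb}_{t+1}}\leq(\kappa^{\prime})^2(1-\gamma^{\prime})^{H+1}\norm{\xb^{\Kb}_{t-H}}+E\leq\frac{E}{1-(\kappa^{\prime})^2(1-\gamma^{\prime})^{H+1}},
\end{equation*}
provided $(\kappa^{\prime})^2(1-\gamma^{\prime})^{H+1}<1$, which holds once $H$ is logarithmically large (consistent with $H=\Theta(\log T)$). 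Hence $\norm{\xb^{\Kb}_t},\norm{\yb^{\Kb}_t}\leq D$ for all $t$.

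The remaining claims follow from the triangle inequality and these two facts. For the (surrogate) actions, $\ub^{\Kb}_t=-\Kb\xb^{\Kb}_t+\sum_{j=1}^H\Mb^{[j-1]}_t\wb^{\prime}_{t-j}$ while $\vb^{\Kb}_t$ has the same form with $\yb^{\Kb}_t$ in place of $\xb^{\Kb}_t$; bounding the first term by $\norm{\Kb}\leq\kappa^{\prime}$ times the state bound and the second by $W^{\prime}C\sum_{j=1}^{H}(1-\gamma)^{j-1}\leq W^{\prime}C/\gamma$ accounts for the additive $W^{\prime}C/\gamma$ in $D$ and gives $\norm{\ub^{\Kb}_t},\norm{\vb^{\Kb}_t}\leq D$. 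For the approximation errors, the identity gives $\xb^{\Kb}_{t+1}-\yb^{\Kb}_{t+1}=(\Ab^{\prime}-\Bb^{\prime}\Kb)^{H+1}\xb^{\Kb}_{t-H}$, so $\norm{\xb^{\Kb}_{t+1}-\yb^{\Kb}_{t+1}}\leq(\kappa^{\prime})^2(1-\gamma^{\prime})^{H+1}\norm{\xb^{\Kb}_{t-H}}\leq(\kappa^{\prime})^2(1-\gamma^{\prime})^{H+1}D$, and $\ub^{\Kb}_{t+1}-\vb^{\Kb}_{t+1}=-\Kb(\xb^{\Kb}_{t+1}-\yb^{\Kb}_{t+1})$ contributes the extra factor $\norm{\Kb}\leq\kappa^{\prime}$, giving $(\kappa^{\prime})^3(1-\gamma^{\prime})^{H+1}D$.

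I expect the main obstacle to be the self-referential bound on $\norm{\xb^{\Kb}_t}$: the estimate must be closed against itself across the $(H+1)$-step ``skip'' term $(\Ab^{\prime}-\Bb^{\prime}\Kb)^{H+1}\xb^{\Kb}_{t-H}$, which is precisely what forces the contraction requirement $(\kappa^{\prime})^2(1-\gamma^{\prime})^{H+1}<1$ and the denominator $1-(\kappa^{\prime})^2(1-\gamma^{\prime})^{H+1}$ appearing in $D$, and which calls for a separate treatment of the initial window $t\leq H+1$ where $\xb^{\Kb}_{t-H}$ is not yet defined. The rest is routine bookkeeping, mainly tracking which power of $\kappa^{\prime}$ enters each of the four norm bounds so that the single constant $D$ dominates all of them.
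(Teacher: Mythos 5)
Your proposal is correct and follows essentially the same route as the paper's proof: bound the surrogate state via the transfer-matrix estimate of Lemma \ref{L: Upper bound of the disturbance-state transfer matrix}, close the bound on $\norm{\xb_t}$ against itself through the $(\Ab^{\prime}-\Bb^{\prime}\Kb)^{H+1}\xb_{t-H}$ term (the paper compresses your induction into the phrase ``expanding the recursion''), and then read off the action bounds and the two difference bounds from the same identity. Your explicit statement of the contraction condition $(\kappa^{\prime})^2(1-\gamma^{\prime})^{H+1}<1$ and the separate handling of $t\leq H+1$ only make explicit what the paper leaves implicit.
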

    \begin{proof}
    Based on the expression of DFC, we have
    \begin{equation*}
    \begin{split}
        \norm{\xb_{t+1}} &= \norm{(\Ab^{\prime} - \Bb^{\prime}\Kb)^{H+1}\xb_{t-H} + \sum_{i=0}^{2H} \Psi^{\Kb,H}_{t,i}(\Mb_{t-H:t}|\Ab^{\prime},\Bb^{\prime}) \wb^{\prime}_{t-i}}\\
        &\leq (\kappa')^2(1-\gamma')^{(H+1)}\norm{\xb_{t-H}} + W^{\prime}\sum_{i=0}^{2H} \norm{\Psi^{\Kb,H}_{t,i}(\Mb_{t-H:t}|\Ab^{\prime},\Bb^{\prime})}\\
        &\leq (\kappa')^2(1-\gamma')^{(H+1)}\norm{\xb_{t-H}} + W^{\prime}\left(\frac{(\kappa')^2 + H\kappa_{B^{\prime}}(\kappa')^2C}{\gamma'}\right),
    \end{split}    
    \end{equation*}
    which by expanding the recursion implies the following
    \begin{equation*}
        \norm{\xb_{t+1}} \leq \frac{W^{\prime}\big((\kappa')^2 + H\kappa_{B^{\prime}}(\kappa')^2C\big)}{\gamma'\big(1-(\kappa')^2(1-\gamma')^{(H+1)}\big)}\leq D.
    \end{equation*}
    For the surrogate state, we have
    \begin{equation*}
    \begin{split}
        \norm{\yb^{\Kb}_{t+1}(\Mb_{t-H:t}|\Ab^{\prime},\Bb^{\prime},\{\wb^{\prime}\})} &= \norm{\sum_{i=0}^{2H} \Psi^{\Kb,H}_{t,i}(\Mb_{t-H:t}|\Ab^{\prime},\Bb^{\prime}) \wb^{\prime}_{t-i}}\\
        &\leq W^{\prime}\sum_{i=0}^{2H} \norm{\Psi^{\Kb,H}_{t,i}(\Mb_{t-H:t}|\Ab^{\prime},\Bb^{\prime})}\\
        &\leq W^{\prime}\left(\frac{(\kappa')^2 + H\kappa_{B^{\prime}}(\kappa')^2C}{\gamma'}\right)\leq D.
    \end{split}    
    \end{equation*}
    And for the difference between the actual and surrogate states, we have
    \begin{equation*}
        \norm{\xb_{t+1} - \yb^{\Kb}_{t+1}(\Mb_{t-H:t}|\Ab^{\prime},\Bb^{\prime},\{\wb^{\prime}\})} \leq \norm{(\Ab^{\prime} - \Bb^{\prime}\Kb)^{H+1}}\norm{\xb_{t-H}}\leq (\kappa')^2(1-\gamma')^{(H+1)}D.
    \end{equation*}
    \end{proof}
    For the actual and surrogate actions, we have the following bounds
    \begin{equation*}
        \norm{\ub^{\Kb}_{t+1}}\leq \norm{\Kb}\norm{\xb_{t+1}} + W^{\prime}\sum_{i=1}^H \norm{\Mb_{t+1}^{[i-1]}}\leq \kappa'\norm{\xb_{t+1}} + \frac{W^{\prime}C}{\gamma}\leq D,
    \end{equation*}
    \begin{equation*}
        \norm{\vb^{\Kb}_{t+1}}\leq \norm{\Kb}\norm{\yb^{\Kb}_{t+1}} + W^{\prime}\sum_{i=1}^H \norm{\Mb_{t+1}^{[i-1]}}\leq \kappa'\norm{\yb^{\Kb }_{t+1}} + \frac{W^{\prime}C}{\gamma}\leq D.
    \end{equation*}
    And the difference is upper bounded as follows
    \begin{equation*}
        \norm{\ub^{\Kb}_{t+1} - \vb^{\Kb}_{t+1}}\leq \norm{\Kb}\norm{\xb^{\Kb}_{t+1}-\yb^{\Kb}_{t+1}}\leq (\kappa')^3(1-\gamma')^{(H+1)}D.
    \end{equation*}

    \begin{lemma}\label{L: Approximate a linear policy using time-invariant DFC policy}(Approximate a linear policy using time-invariant DFC policy) For any two $(\kappa, \gamma)$-strongly stable matrices $\Kb$ and $\Kb^*$, there is a policy $\pi(\Mb_*, \Kb)$ where $\Mb_*=(\Mb_*^{[0]},\ldots,\Mb_*^{[H-1]})$ defined as 
    \begin{equation*}
        \Mb_*^{[i]} = (\Kb - \Kb^*)(\Ab - \Bb\Kb^*)^{i}
    \end{equation*}
    such that the difference of the accumulated costs of the time-invariant DFC $(\Mb_*,\Kb)$ and the linear policy $\Kb^*$ is upper bounded as follows
    \begin{equation*}
        \sum_{t=1}^T \left[c_t\big(\xb^{\Kb}_t(\Mb_*), \ub^{\Kb}_t(\Mb_*)\big) - c_t\big(\xb^{\Kb^*}_t(0), \ub^{\Kb^*}_t(0)\big)\right]\leq TmG_c D\frac{6WH \kappa_B^2\kappa^6(1-\gamma)^{H-1}}{\gamma},
    \end{equation*}
    where 
    \begin{equation*}
    \begin{split}
        \xb^{\Kb}_{t+1}(\Mb_*) &= \Ab\xb^{\Kb}_t(\Mb_*) + \Bb\ub^{\Kb}_t(\Mb_*) + \wb_t,\\
        \ub^{\Kb}_t(\Mb_*) &= -\Kb\xb^{\Kb}_t(\Mb_*) + \sum_{k=1}^H \Mb_*^{[k-1]}\wb_{t-k}. 
    \end{split}
    \end{equation*}
    \end{lemma}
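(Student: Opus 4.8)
The plan is to recognize $\Mb_*$ as the length-$H$ truncation of an \emph{infinite-memory} disturbance-feedback policy that reproduces the linear comparator $\Kb^*$ exactly, and then to bound the cost incurred by that truncation. The key identity is the following: since $\xb^{\Kb^*}_1(0)=0$, unrolling $\xb^{\Kb^*}_{t+1}(0)=(\Ab-\Bb\Kb^*)\xb^{\Kb^*}_t(0)+\wb_t$ gives $\xb^{\Kb^*}_t(0)=\sum_{k=1}^{t-1}(\Ab-\Bb\Kb^*)^{k-1}\wb_{t-k}$, so by the definition $\Mb_*^{[i]}=(\Kb-\Kb^*)(\Ab-\Bb\Kb^*)^i$ we get $(\Kb-\Kb^*)\xb^{\Kb^*}_t(0)=\sum_{k=1}^{t-1}\Mb_*^{[k-1]}\wb_{t-k}$, hence $\ub^{\Kb^*}_t(0)=-\Kb^*\xb^{\Kb^*}_t(0)=-\Kb\xb^{\Kb^*}_t(0)+\sum_{k=1}^{t-1}\Mb_*^{[k-1]}\wb_{t-k}$. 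In other words the full-history DFC with parameters $\Mb_*$ and fixed matrix $\Kb$ coincides with the linear policy $\Kb^*$; equivalently, $(\Ab-\Bb\Kb^*)^i=(\Ab-\Bb\Kb)^i+\sum_{j=0}^{i-1}(\Ab-\Bb\Kb)^j\Bb\Mb_*^{[i-1-j]}$, proved by a one-line induction on $i$. (Note in passing that $\norm{\Mb_*^{[i]}}\le\norm{\Kb-\Kb^*}\norm{(\Ab-\Bb\Kb^*)^i}\le 2\kappa^3(1-\gamma)^i$ by strong stability of $\Kb^*$, so $\Mb_*\in\mathcal{M}$.)

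Next I would quantify the truncation error. Writing both closed loops in the common form $\xb_{t+1}=(\Ab-\Bb\Kb)\xb_t+\Bb\sum_k\Mb_*^{[k-1]}\wb_{t-k}+\wb_t$ — the inner sum running to $t-1$ for $\Kb^*$ but only to $H$ for the policy $(\Mb_*,\Kb)$ — and subtracting, the state gap $\zb_t:=\xb^{\Kb^*}_t(0)-\xb^{\Kb}_t(\Mb_*)$ obeys $\zb_1=0$ and $\zb_{t+1}=(\Ab-\Bb\Kb)\zb_t+\Bb\bb_t$ with dropped tail $\bb_t:=\sum_{k=H+1}^{t-1}\Mb_*^{[k-1]}\wb_{t-k}$. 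The bound on $\norm{\Mb_*^{[i]}}$ gives $\norm{\bb_t}=O(W\kappa^3(1-\gamma)^H/\gamma)$, and the strong stability of $\Kb$ (geometric decay of $(\Ab-\Bb\Kb)^j$) propagates this to $\norm{\zb_t}=O(W\kappa_B\kappa^5(1-\gamma)^H/\gamma^2)$; the control gap then follows from $\ub^{\Kb^*}_t(0)-\ub^{\Kb}_t(\Mb_*)=-\Kb\zb_t+\bb_t$. To recover the precise constants and the $H(1-\gamma)^{H-1}$ rate stated in the lemma, the natural route is to pass through the surrogate of Lemma \ref{L: Upper bounds of (surrogate)states and (surrogate)actions and the difference}: $\xb^{\Kb}_t(\Mb_*)$ is within $\kappa^2(1-\gamma)^{H+1}D$ of its surrogate $\yb^{\Kb}_t(\Mb_*)$, and the surrogate differs from $\xb^{\Kb^*}_t(0)$ only through the $\approx H$ cross-terms of the transfer matrix $\Psi^{\Kb,H}_{t,i}(\Mb_*)$ with $i>H$, each of size $O(\kappa_B\kappa^5(1-\gamma)^{i-1})$ by Lemma \ref{L: Upper bound of the disturbance-state transfer matrix}.

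Finally I would convert the state/action discrepancy into a cost gap. By Lemma \ref{L: Upper bounds of (surrogate)states and (surrogate)actions and the difference} (with the linear policy being the case $\Mb=0$) both trajectories stay in the radius-$D$ ball, so by Assumption \ref{A2} each $c_{i,t}$ is $G_cD$-Lipschitz there and $c_t=\sum_{i=1}^m c_{i,t}$ is $mG_cD$-Lipschitz; therefore $c_t(\xb^{\Kb}_t(\Mb_*),\ub^{\Kb}_t(\Mb_*))-c_t(\xb^{\Kb^*}_t(0),\ub^{\Kb^*}_t(0))\le mG_cD(\norm{\zb_t}+\norm{\ub^{\Kb^*}_t(0)-\ub^{\Kb}_t(\Mb_*)})$, and summing this per-round bound over $t=1,\dots,T$ gives $TmG_cD\cdot O(WH\kappa_B^2\kappa^6(1-\gamma)^{H-1}/\gamma)$, which is the claimed estimate.

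The main obstacle is Steps 1--2 together: spotting that $\Mb_*$ is the ``correct'' parameter (the matrix-series telescoping identity that makes the infinite-memory DFC equal to $\Kb^*$), and then accounting honestly for the truncation — in particular noticing that cutting the memory at $H$ costs a factor of $H$, not $1$, because roughly $H$ cross-terms of the transfer matrix survive near level $i\approx H$. Once the state/action discrepancy is pinned down as $O(H(1-\gamma)^{H-1})$, everything downstream (closed-loop stability of $\Kb$, Lipschitzness of the costs, the factor $m$ from $c_t=\sum_i c_{i,t}$, and the trivial sum over the $T$ rounds) is routine bookkeeping.
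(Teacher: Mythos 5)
Your proposal is correct and follows essentially the same route as the paper: the telescoping identity showing that the transfer matrix $\Psi^{\Kb,t-1}_{t,i}(\Mb_*)$ equals $(\Ab-\Bb\Kb^*)^i$ for $i\leq H$ (so the state gap reduces to the tail terms $i\geq H+1$), the geometric bound on those tail terms via the transfer-matrix norm bound and $\norm{\Mb_*^{[i]}}\leq 2\kappa^3(1-\gamma)^i$, and the conversion to a cost gap via the $mG_cD$-Lipschitzness of $c_t$ on the radius-$D$ ball. The minor detour through the surrogate state and the alternative recursion for the state gap are not needed, but they do not change the argument or the order of the bound.
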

    \begin{proof}
    Expanding the linear and DFC policies recursively, we have
    \begin{equation}\label{Eq1: Approximate a linear policy using time-invariant DFC policy}
    \begin{split}
        \xb^{\Kb^*}_{t+1}(0) &= \sum_{i=0}^{t-1} \Tilde{\Ab}^i_{\Kb^*}\wb_{t-i},\\
        \xb^{\Kb}_{t+1}(\Mb_*) &= \sum_{i=0}^{t-1} \Psi^{\Kb,t-1}_{t,i}(\Mb_*) \wb_{t-i}.
    \end{split}
    \end{equation}
    Also, based on the definition of the disturbance-state transfer matrix of DFC, we have $\forall i\leq H$ (we note that based on Equation \eqref{Eq1: Approximate a linear policy using time-invariant DFC policy}, $i$ is always less than $t-1$),
    \begin{equation}\label{Eq2: Approximate a linear policy using time-invariant DFC policy}
    \begin{split}
        \Psi^{\Kb,t-1}_{t,i}(\Mb_*) &= \Tilde{\Ab}_{\Kb}^i \mathbb{1}_{i\leq t-1} + \sum_{j=0}^{t-1} \Tilde{\Ab}_{\Kb}^j\Bb\Mb^{[i-j-1]}_{*}\mathbb{1}_{i-j\in [1,H]}\\
        &= \Tilde{\Ab}_{\Kb}^i + \sum_{j=0}^{i-1} \Tilde{\Ab}_{\Kb}^j\Bb\Mb^{[i-j-1]}_{*}\\
        &= \Tilde{\Ab}_{\Kb}^i + \sum_{j=0}^{i-1} \Tilde{\Ab}_{\Kb}^j\Bb (\Kb - \Kb^*)\Tilde{\Ab}_{\Kb^*}^{i-j-1}\\
        &= \Tilde{\Ab}_{\Kb}^i + \sum_{j=0}^{i-1} \Tilde{\Ab}_{\Kb}^j(\Tilde{\Ab}_{\Kb^*} - \Tilde{\Ab}_{\Kb})\Tilde{\Ab}_{\Kb^*}^{i-j-1}\\
        &= \Tilde{\Ab}_{\Kb}^i + \sum_{j=0}^{i-1} \Tilde{\Ab}_{\Kb}^j\Tilde{\Ab}_{\Kb^*}^{i-j} - \Tilde{\Ab}_{\Kb}^{j+1}\Tilde{\Ab}_{\Kb^*}^{i-j-1}\\
        &= \Tilde{\Ab}_{\Kb^*}^i.
    \end{split}
    \end{equation}
    Based on Equations \eqref{Eq1: Approximate a linear policy using time-invariant DFC policy} and \eqref{Eq2: Approximate a linear policy using time-invariant DFC policy}, we get
    \begin{equation}\label{Eq3: Approximate a linear policy using time-invariant DFC policy}
    \begin{split}
        \norm{\xb^{\Kb}_{t+1}(\Mb_*) - \xb^{\Kb^*}_{t+1}(0)} &= \norm{\sum_{i=H+1}^{t-1} \big(\Psi^{\Kb,t-1}_{t,i}(\Mb_*) - \Tilde{\Ab}^i_{\Kb^*}\big) \wb_{t-i}}\\
        &\leq W\sum_{i=H+1}^{t-1}\left[\norm{\Psi^{\Kb,t-1}_{t,i}(\Mb_*)} + \norm{\Tilde{\Ab}^i_{\Kb^*}}\right]\\
        &\leq W\sum_{i=H+1}^{t-1}\left[\kappa^2(1-\gamma)^i + H \kappa_B^2\kappa^5(1-\gamma)^{i-1} + \kappa^2(1-\gamma)^i\right]\\
        &\leq \frac{3WH \kappa_B^2\kappa^5(1-\gamma)^H}{\gamma},
    \end{split}    
    \end{equation}
    where the second inequality is based on Lemma \ref{L: Upper bound of the disturbance-state transfer matrix} and the fact that $\norm{\Mb_*^{[i]}}\leq \kappa_B\kappa^3(1-\gamma)^i$.
    Also based on the policies, we have
    \begin{equation}\label{Eq4: Approximate a linear policy using time-invariant DFC policy}
    \begin{split}
        \norm{\ub^{\Kb}_{t+1}(\Mb_*) - \ub^{\Kb^*}_{t+1}(0)} &= \norm{-\Kb\xb^{\Kb}_{t+1}(\Mb_*) + \Kb^*\xb^{\Kb^*}_{t+1}(0) +\sum_{k=0}^{H-1} \Mb_*^{[k]}\wb_{t-k} }\\
        &\leq \sum_{i=H}^{t-1} \norm{\big(-\Kb\Psi^{\Kb,t-1}_{t,i}(\Mb_*) +  \Kb^*\Tilde{\Ab}^i_{\Kb^*}\big) \wb_{t-i}}\\
        &\leq W\sum_{i=H}^{t-1}\left[\kappa^3(1-\gamma)^i + H \kappa_B^2\kappa^6(1-\gamma)^{i-1} + \kappa^3(1-\gamma)^i\right]\\
        &\leq \frac{3WH \kappa_B^2\kappa^6(1-\gamma)^{H-1}}{\gamma}.
    \end{split}    
    \end{equation}
    Then based on Assumption \ref{A2}, Lemma \ref{L: Upper bounds of (surrogate)states and (surrogate)actions and the difference}, Equations \eqref{Eq3: Approximate a linear policy using time-invariant DFC policy} and \eqref{Eq4: Approximate a linear policy using time-invariant DFC policy}, we get
    \begin{equation}\label{Eq5: Approximate a linear policy using time-invariant DFC policy}
    \begin{split}
        &c_{t+1}\big(\xb^{\Kb}_{t+1}(\Mb_*), \ub^{\Kb}_{t+1}(\Mb_*)\big) - c_{t+1}\big(\xb^{\Kb^*}_{t+1}(0), \ub^{\Kb^*}_{t+1}(0)\big)\\
        \leq &mG_c D\left(\norm{\xb^{\Kb}_{t+1}(\Mb_*) - \xb^{\Kb^*}_{t+1}(0)} + \norm{\ub^{\Kb}_{t+1}(\Mb_*) - \ub^{\Kb^*}_{t+1}(0)}\right)\\
        \leq &mG_c D\frac{6WH \kappa_B^2\kappa^6(1-\gamma)^{H-1}}{\gamma}.
    \end{split}
    \end{equation}
    Summing the inequality above over time, the final result is derived.
    
    \end{proof}

    \subsection{Bounds for The Distributed Online Learning with Memory and the Individual Regret (Known System)}
    In this section, we discuss the individual regret bound for systems with known parameters via the analysis of the distributed online learning with memory based on the parameterization of DFC. Specifically speaking, Lemma \ref{L: Lipschitz constant in terms of the framework of OCO with memory} characterizes the Lipschitz continuity of the function parameterized by DFC, Lemma \ref{L: Gradient bound for reparameterized functions} provides the gradient bound of the reparameterized function and Lemma \ref{L: Consecutive distance} quantifies the difference between iterates learned by different agents when D-OGD is applied based on the network topology. With these auxiliary lemmas, Theorem \ref{T: Regret bound for OCO with memory} shows the regret bound for the transformed distributed OL with memory, which is later used to prove Theorem \ref{T: Regret bound (known case)}.
    
    \begin{lemma}\label{L: Lipschitz constant in terms of the framework of OCO with memory} (Lipschitz constant in terms of the framework of OCO with memory) Let $\Kb$ be $(\kappa',\gamma')$ strongly stable w.r.t. the system $(\Ab^{\prime},\Bb^{\prime})$ where $\norm{\Bb^{\prime}}\leq \kappa_{B^{\prime}}$. Consider the same noise sequence $\{\wb^{\prime}\}$ ($\norm{\wb^{\prime}_t}\leq W'$ for all $t$) and two policy sequences $\{\Mb_{t-H},\ldots, \Mb_{t-k},\ldots, \Mb_{t+1}\}$ and $\{\Mb_{t-H},\ldots, \Tilde{\Mb}_{t-k},\ldots, \Mb_{t+1}\}$ which differ at a time step $t-k$ for $k\in\{-1,\ldots,H\}$, then under Assumption \ref{A2} we have
    \begin{equation*}
    \resizebox{\linewidth}{!}
    {
    \begin{minipage}{\linewidth}
    \begin{align*}
        &|f_{i,t+1}(\Mb_{t-H},\ldots, \Mb_{t-k},\ldots, \Mb_{t}|\Ab^{\prime},\Bb^{\prime},\{\wb^{\prime}\}) - f_{i,t+1}(\Mb_{t-H},\ldots, \Tilde{\Mb}_{t-k},\ldots, \Mb_{t}|\Ab^{\prime},\Bb^{\prime},\{\wb^{\prime}\})|\\ 
        \leq &G_c D 3\kappa_{B^{\prime}}\kappa^3W^{\prime} \sum_{j=0}^{H-1}\norm{\Mb_{t-k}^{[j]} - \Tilde{\Mb}_{t-k}^{[j]}},
    \end{align*}
    \end{minipage}
    }
    \end{equation*}
    where $D = \frac{W^{\prime}\big((\kappa')^2 + H\kappa_{B^{\prime}}(\kappa')^2C\big)}{\gamma'\big(1-(\kappa')^2(1-\gamma')^{(H+1)}\big)} + \frac{W^{\prime}C}{\gamma}$.
    \end{lemma}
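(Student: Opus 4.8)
The plan is to strip off the cost function via its local Lipschitz continuity and then control the resulting perturbation of the surrogate state and surrogate action by reading it directly off the closed form of the disturbance-state transfer matrix. I would first write $f_{i,t+1}(\cdot\,|\Ab^{\prime},\Bb^{\prime},\{\wb^{\prime}\})=c_{i,t+1}\big(\yb^{\Kb}_{t+1}(\cdot),\vb^{\Kb}_{t+1}(\cdot)\big)$ for each of the two policy sequences. Since every entry of both sequences lies in $\mathcal{M}$, Lemma \ref{L: Upper bounds of (surrogate)states and (surrogate)actions and the difference} gives $\norm{\yb^{\Kb}_{t+1}}\le D$ and $\norm{\vb^{\Kb}_{t+1}}\le D$ in both cases; by convexity the whole segment joining the two $(\yb,\vb)$-pairs also satisfies these coordinatewise bounds, so Assumption \ref{A2} makes $c_{i,t+1}$ $(G_cD)$-Lipschitz along it. Hence the left-hand side is at most $G_cD\big(\norm{\Delta\yb}+\norm{\Delta\vb}\big)$, where $\Delta\yb:=\yb^{\Kb}_{t+1}(\Mb_{t-H:t})-\yb^{\Kb}_{t+1}(\widetilde{\Mb}_{t-H:t})$ and $\Delta\vb$ is defined analogously, the two sequences differing only in the $(t-k)$-th entry.

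Next I would bound the surrogate perturbations. Using $\yb^{\Kb}_{t+1}=\sum_{i=0}^{2H}\Psi^{\Kb,H}_{t,i}(\Mb_{t-H:t}|\Ab^{\prime},\Bb^{\prime})\wb^{\prime}_{t-i}$ together with the explicit formula for $\Psi^{\Kb,H}_{t,i}$, the only summand of $\Psi^{\Kb,H}_{t,i}$ that involves $\Mb_{t-k}$ (for $k\in\{0,\dots,H\}$) is the $j=k$ term $(\Ab^{\prime}-\Bb^{\prime}\Kb)^k\Bb^{\prime}\Mb^{[i-k-1]}_{t-k}\mathbb{1}_{i-k\in[1,H]}$; after reindexing $j=i-k-1$ this gives $\Delta\yb=\sum_{j=0}^{H-1}(\Ab^{\prime}-\Bb^{\prime}\Kb)^k\Bb^{\prime}\big(\Mb^{[j]}_{t-k}-\widetilde{\Mb}^{[j]}_{t-k}\big)\wb^{\prime}_{t-k-1-j}$, and bounding $\norm{(\Ab^{\prime}-\Bb^{\prime}\Kb)^k}\le(\kappa^{\prime})^2$ by strong stability, $\norm{\Bb^{\prime}}\le\kappa_{B^{\prime}}$ and $\norm{\wb^{\prime}_\cdot}\le W^{\prime}$ yields $\norm{\Delta\yb}\le(\kappa^{\prime})^2\kappa_{B^{\prime}}W^{\prime}\sum_{j=0}^{H-1}\norm{\Mb^{[j]}_{t-k}-\widetilde{\Mb}^{[j]}_{t-k}}$. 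For $k=-1$ (the perturbation is in $\Mb_{t+1}$) the surrogate state does not depend on $\Mb_{t+1}$ at all, so $\Delta\yb=0$. For the action, $\vb^{\Kb}_{t+1}=-\Kb\yb^{\Kb}_{t+1}+\sum_{i=1}^H\Mb^{[i-1]}_{t+1}\wb^{\prime}_{t+1-i}$, so for $k\in\{0,\dots,H\}$ we get $\Delta\vb=-\Kb\,\Delta\yb$, hence $\norm{\Delta\vb}\le(\kappa^{\prime})^3\kappa_{B^{\prime}}W^{\prime}\sum_{j=0}^{H-1}\norm{\Mb^{[j]}_{t-k}-\widetilde{\Mb}^{[j]}_{t-k}}$, whereas for $k=-1$ only the moving-average term changes, giving $\norm{\Delta\vb}\le W^{\prime}\sum_{j=0}^{H-1}\norm{\Mb^{[j]}_{t+1}-\widetilde{\Mb}^{[j]}_{t+1}}$.

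Finally I would assemble the pieces: adding the two bounds and using $1\le\kappa^{\prime}\le\kappa$ and $\kappa,\kappa_{B^{\prime}}\ge1$ (so $(\kappa^{\prime})^2+(\kappa^{\prime})^3\le2\kappa^3$ when $k\ge0$, and $W^{\prime}\le\kappa_{B^{\prime}}\kappa^3W^{\prime}$ when $k=-1$) gives $\norm{\Delta\yb}+\norm{\Delta\vb}\le 3\kappa_{B^{\prime}}\kappa^3W^{\prime}\sum_{j=0}^{H-1}\norm{\Mb^{[j]}_{t-k}-\widetilde{\Mb}^{[j]}_{t-k}}$ in every case; multiplying by the Lipschitz factor $G_cD$ finishes the proof. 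The main obstacle I anticipate is purely bookkeeping in the second step — correctly isolating the single $\Mb_{t-k}$-dependent summand of $\Psi^{\Kb,H}_{t,i}$ across the range $0\le i\le2H$, doing the reindexing, and treating $k=-1$ separately, since there the dependence flows only through the explicit moving-average term of $\vb^{\Kb}_{t+1}$ rather than through the surrogate state. Everything else is a routine norm estimate.
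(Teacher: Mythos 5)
Your proposal is correct and follows essentially the same route as the paper's proof: bound the perturbation of the surrogate state by isolating the single $\Mb_{t-k}$-dependent summand of $\Psi^{\Kb,H}_{t,i}$, propagate it to the surrogate action via $\Delta\vb=-\Kb\Delta\yb$ plus the $k=-1$ moving-average term, and then apply the $G_cD$ gradient bound from Assumption \ref{A2} together with Lemma \ref{L: Upper bounds of (surrogate)states and (surrogate)actions and the difference}. Your separate treatment of $k=-1$ (where $\Delta\yb=0$) and the explicit reconciliation of $\kappa'$ versus $\kappa$ in the final constant are, if anything, slightly more careful than the paper's uniform bound.
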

    \begin{proof}
        Based on the definition of the surrogate state and action, we have the following upper bounds\\ (Define $\yb^{\Kb}_{t+1}(\Mb_{t-H:t}|\Ab^{\prime},\Bb^{\prime},\{\wb^{\prime}\})$ as $\yb^{\Kb}_{t+1}$. Similar for $\Tilde{\yb}^{\Kb}_{t+1}$, $\vb^{\Kb}_{t+1}$ and $\Tilde{\vb}^{\Kb}_{t+1}$),
    \begin{equation*}
    \begin{split}
        \norm{\yb^{\Kb}_{t+1} - \Tilde{\yb}^{\Kb}_{t+1}} &= \norm{\sum_{j=0}^{2H}(\Ab^{\prime} - \Bb^{\prime}\Kb)^k\Bb'(\Mb_{t-k}^{[j-k-1]} - \Tilde{\Mb}_{t-k}^{[j-k-1]})\wb^{\prime}_{t-j}\mathbb{1}_{j-k\in[1,H]}}\\
        &\leq \kappa_{B^{\prime}}(\kappa')^2(1-\gamma')^kW^{\prime} \sum_{j=0}^{H-1}\norm{\Mb_{t-k}^{[j]} - \Tilde{\Mb}_{t-k}^{[j]}} 
    \end{split}
    \end{equation*}
    and    
    \begin{equation*}
    \resizebox{\linewidth}{!}
    {
    \begin{minipage}{\linewidth}
    \begin{align*}
        \norm{\vb^{\Kb}_{t+1} - \Tilde{\vb}^{\Kb}_{t+1}} &= \norm{-\Kb(\yb^{\Kb}_{t+1} - \Tilde{\yb}^{\Kb}_{t+1}) + \mathbb{1}_{k=-1}\sum_{j=1}^H (\Mb_{t+1}^{[j-1]}-\Tilde{\Mb}_{t+1}^{[j-1]})\wb^{\prime}_{t+1-j}}\\
        &\leq \kappa_{B^{\prime}}(\kappa')^3(1-\gamma')^kW^{\prime} \sum_{j=0}^{H-1}\norm{\Mb_{t-k}^{[j]} - \Tilde{\Mb}_{t-k}^{[j]}} + \norm{\mathbb{1}_{k=-1}\sum_{j=1}^H (\Mb_{t+1}^{[j-1]}-\Tilde{\Mb}_{t+1}^{[j-1]})\wb^{\prime}_{t+1-j}}\\
        &\leq 2\kappa_{B^{\prime}}(\kappa')^3W^{\prime} \sum_{j=0}^{H-1}\norm{\Mb_{t-k}^{[j]} - \Tilde{\Mb}_{t-k}^{[j]}}.
    \end{align*}
    \end{minipage}
    }
    \end{equation*}
    Based the expression above and the assumption on the function gradient, we have
    \begin{equation*}
    \begin{split}
        &|f_{i,t+1}(\Mb_{i,t-H},\ldots, \Mb_{i,t-k},\ldots, \Mb_{i,t}) - f_{i,t+1}(\Mb_{i,t-H},\ldots, \Tilde{\Mb}_{i,t-k},\ldots, \Mb_{i,t})|\\
    = &|f_{i,t+1}(\yb_{t+1}^{\Kb}, \vb_{t+1}^{\Kb}) - f_{i,t+1}(\Tilde{\yb}_{t+1}^{\Kb}, \Tilde{\vb}_{t+1}^{\Kb})|\\
    \leq &G_c D(\sqrt{\norm{\yb^{\Kb}_{t+1} - \Tilde{\yb}^{\Kb}_{t+1}}^2 + \norm{\vb^{\Kb}_{t+1} - \Tilde{\vb}^{\Kb}_{t+1}}^2})\\
    \leq &G_c D(\norm{\yb^{\Kb}_{t+1} - \Tilde{\yb}^{\Kb}_{t+1}} + \norm{\vb^{\Kb}_{t+1} - \Tilde{\vb}^{\Kb}_{t+1}})\\
    \leq &G_c D 3\kappa_{B^{\prime}}(\kappa')^3W^{\prime} \sum_{j=0}^{H-1}\norm{\Mb_{t-k}^{[j]} - \Tilde{\Mb}_{t-k}^{[j]}}.
    \end{split}    
    \end{equation*}
    
    \end{proof}

    \begin{lemma}\label{L: Gradient bound for reparameterized functions}(Gradient bound for $\nabla_{\Mb}f_{t+1}(\Mb)$)Let $\Kb$ be $(\kappa',\gamma')$ strongly stable w.r.t. the system $(\Ab^{\prime},\Bb^{\prime})$ where $\norm{\Bb^{\prime}}\leq \kappa_{B^{\prime}}$. For any $\Mb\in \mathcal{M}=\{\Mb: \Mb^{[i-1]}\leq C(1-\gamma)^i, \gamma\geq\gamma'\}$, the derivative of the ideal cost $f_{i,t}(\Mb|\Ab^{\prime}, \Bb^{\prime},\{\wb^{\prime}\})$ by applying the time-invariant $\Mb$ on the system $(\Ab^{\prime}, \Bb^{\prime},\{\wb^{\prime}\})$ is upper bounded as follows
    \begin{equation*}
        \norm{\nabla_{\Mb}f_{i,t+1}(\Mb|\Ab^{\prime},\Bb^{\prime},\{\wb^{\prime}\})}_F\leq G_c D \sqrt{d}d\sqrt{H} (\frac{(\kappa')^3\kappa_{B^{\prime}} W^{\prime}}{\gamma'} + W^{\prime}),
    \end{equation*}    
    where $D = \frac{W^{\prime}\big((\kappa')^2 + H\kappa_{B^{\prime}}(\kappa')^2C\big)}{\gamma'\big(1-(\kappa')^2(1-\gamma')^{(H+1)}\big)} + \frac{W^{\prime}C}{\gamma}$ and $d=\max\{d_1,d_2\}$.
    \end{lemma}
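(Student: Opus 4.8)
The plan is to bound the gradient entry by entry and then collect the entries into the Frobenius norm. Write $f_{i,t+1}(\Mb\mid\Ab^{\prime},\Bb^{\prime},\{\wb^{\prime}\}) = c_{i,t+1}\bigl(\yb^{\Kb}_{t+1}(\Mb),\,\vb^{\Kb}_{t+1}(\Mb)\bigr)$ for the time-invariant policy, where $\yb^{\Kb}_{t+1}(\Mb)=\sum_{i=0}^{2H}\Psi^{\Kb,H}_{t,i}(\Mb\mid\Ab^{\prime},\Bb^{\prime})\wb^{\prime}_{t-i}$ and $\vb^{\Kb}_{t+1}(\Mb)=-\Kb\yb^{\Kb}_{t+1}(\Mb)+\sum_{i=1}^H\Mb^{[i-1]}\wb^{\prime}_{t+1-i}$. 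First I would invoke Lemma \ref{L: Upper bounds of (surrogate)states and (surrogate)actions and the difference} to get $\norm{\yb^{\Kb}_{t+1}},\norm{\vb^{\Kb}_{t+1}}\le D$, so that Assumption \ref{A2} yields $\norm{\bigl(\nabla_{\xb}c_{i,t+1},\nabla_{\ub}c_{i,t+1}\bigr)}\le G_cD$ at this state-action pair. The chain rule then gives, for every scalar entry $(\Mb^{[p]})_{rs}$ with $p\in\{0,\dots,H-1\}$,
\[
\frac{\partial f_{i,t+1}}{\partial (\Mb^{[p]})_{rs}}
=\Bigl\langle \nabla_{\xb}c_{i,t+1},\,\frac{\partial \yb^{\Kb}_{t+1}}{\partial (\Mb^{[p]})_{rs}}\Bigr\rangle
+\Bigl\langle \nabla_{\ub}c_{i,t+1},\,\frac{\partial \vb^{\Kb}_{t+1}}{\partial (\Mb^{[p]})_{rs}}\Bigr\rangle,
\]
so by Cauchy--Schwarz it is enough to bound the two sensitivities $\partial \yb^{\Kb}_{t+1}/\partial(\Mb^{[p]})_{rs}$ and $\partial \vb^{\Kb}_{t+1}/\partial(\Mb^{[p]})_{rs}$.

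Next I would read off these sensitivities from the explicit form of the transfer matrix. In $\Psi^{\Kb,H}_{t,i}$ the block $\Mb^{[p]}$ appears exactly in the summands with $i-j-1=p$, $j\in\{0,\dots,H\}$, i.e.\ as $(\Ab^{\prime}-\Bb^{\prime}\Kb)^{j}\Bb^{\prime}\Mb^{[p]}$ with $i=p+1+j$, so
\[
\frac{\partial \yb^{\Kb}_{t+1}}{\partial (\Mb^{[p]})_{rs}}
=\sum_{\ell=0}^{H}(\Ab^{\prime}-\Bb^{\prime}\Kb)^{\ell}\Bb^{\prime}\eb_r\,[\wb^{\prime}_{t-p-1-\ell}]_s .
\]
Using the strong-stability factorization $\Ab^{\prime}-\Bb^{\prime}\Kb=\Hb\Lb\Hb^{-1}$ with $\norm{\Lb}\le 1-\gamma'$ and $\norm{\Hb},\norm{\Hb^{-1}}\le\kappa'$ (hence $\norm{(\Ab^{\prime}-\Bb^{\prime}\Kb)^{\ell}}\le(\kappa')^2(1-\gamma')^{\ell}$), together with $\norm{\Bb^{\prime}}\le\kappa_{B^{\prime}}$, $\norm{\eb_r}=1$ and $|[\wb^{\prime}_{\cdot}]_s|\le W^{\prime}$, the geometric series gives $\norm{\partial \yb^{\Kb}_{t+1}/\partial(\Mb^{[p]})_{rs}}\le (\kappa')^2\kappa_{B^{\prime}}W^{\prime}/\gamma'$. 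For $\vb^{\Kb}_{t+1}$ the block $\Mb^{[p]}$ enters through $-\Kb\yb^{\Kb}_{t+1}$ and through the single direct term $\Mb^{[p]}\wb^{\prime}_{t-p}$ (present because $p+1\le H$), so $\partial \vb^{\Kb}_{t+1}/\partial(\Mb^{[p]})_{rs}=-\Kb\,\partial \yb^{\Kb}_{t+1}/\partial(\Mb^{[p]})_{rs}+\eb_r[\wb^{\prime}_{t-p}]_s$, whence $\norm{\partial \vb^{\Kb}_{t+1}/\partial(\Mb^{[p]})_{rs}}\le (\kappa')^3\kappa_{B^{\prime}}W^{\prime}/\gamma'+W^{\prime}$. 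Plugging these into the chain-rule identity (and using $(\kappa')^2\le(\kappa')^3$) bounds each entry of $\nabla_{\Mb}f_{i,t+1}$ by $O\!\bigl(G_cD((\kappa')^3\kappa_{B^{\prime}}W^{\prime}/\gamma'+W^{\prime})\bigr)$.

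Finally, since $\Mb$ is a stack of $H$ blocks of size $d_2\times d_1$, the gradient has $Hd_1d_2$ scalar entries, so $\norm{\nabla_{\Mb}f_{i,t+1}}_F\le\sqrt{Hd_1d_2}\cdot O\!\bigl(G_cD((\kappa')^3\kappa_{B^{\prime}}W^{\prime}/\gamma'+W^{\prime})\bigr)$, and using $\sqrt{d_1d_2}\le d\le d\sqrt d$ with $d=\max\{d_1,d_2\}$ (which also absorbs the absolute constant) yields the stated bound. An alternative to differentiating $\Psi^{\Kb,H}_{t,i}$ directly is to recycle the intermediate estimates inside the proof of Lemma \ref{L: Lipschitz constant in terms of the framework of OCO with memory}: perturbing only the slot $\Mb_{t-k}$ changes $\yb^{\Kb}_{t+1}$ by at most $\kappa_{B^{\prime}}(\kappa')^2(1-\gamma')^{k}W^{\prime}\sum_{j}\norm{\Delta\Mb^{[j]}_{t-k}}$, and summing this per-slot sensitivity over the slots $k=-1,0,\dots,H$ occupied by the time-invariant $\Mb$ produces the same geometric series, hence the same $1/\gamma'$. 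I expect the main obstacle to be the bookkeeping in the middle step --- correctly identifying which summands of $\Psi^{\Kb,H}_{t,i}$ (and which term of the direct contribution to $\vb^{\Kb}_{t+1}$) carry $\Mb^{[p]}$, and indexing the powers of $\Ab^{\prime}-\Bb^{\prime}\Kb$ so that the $(1-\gamma')^{\ell}$ decay produces a $1/\gamma'$ factor rather than an $H$; the remaining steps are routine norm estimates.
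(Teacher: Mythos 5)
Your proposal is correct and follows essentially the same route as the paper's proof: chain rule through $(\yb^{\Kb}_{t+1},\vb^{\Kb}_{t+1})$, identification of the summands of $\Psi^{\Kb,H}_{t,i}$ carrying $\Mb^{[p]}$, the strong-stability geometric series giving the $1/\gamma'$ factor, and the per-block sensitivities $(\kappa')^2\kappa_{B'}W'/\gamma'$ and $(\kappa')^3\kappa_{B'}W'/\gamma'+W'$. The only difference is the final dimensional bookkeeping --- you sum squares over the $Hd_1d_2$ scalar entries directly, whereas the paper bounds $\norm{\cdot}_F$ by rank times spectral norm and sums over output coordinates; your version is if anything slightly tighter and lands within the stated $G_cD\sqrt{d}d\sqrt{H}$ bound.
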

    \begin{proof}
        We first decompose the gradient using the chain rule as follows    
    \begin{equation*}
    \resizebox{\linewidth}{!}
    {
    \begin{minipage}{\linewidth}
    \begin{align*}
        &\norm{\nabla_{\Mb}f_{i,t+1}(\Mb|\Ab^{\prime},\Bb^{\prime},\{\wb^{\prime}\})}\\
        \leq &\sum_k\norm{\frac{\partial f_{i,t+1}(\Mb|\Ab^{\prime},\Bb^{\prime},\{\wb^{\prime}\})}{\partial[\yb_{t+1}^{\Kb}(\Mb|\Ab^{\prime},\Bb^{\prime},\{\wb^{\prime}\}), \vb_{t+1}^{\Kb}(\Mb)|\Ab^{\prime},\Bb^{\prime},\{\wb^{\prime}\}]_k}}\norm{\diff{[\yb_{t+1}^{\Kb}(\Mb|\Ab^{\prime},\Bb^{\prime},\{\wb^{\prime}\}), \vb_{t+1}^{\Kb}(\Mb|\Ab^{\prime},\Bb^{\prime},\{\wb^{\prime}\})]_k}{\Mb}},
    \end{align*}
    \end{minipage}
    }
    \end{equation*}
    where $[\yb_{t+1}^{\Kb}(\Mb|\Ab^{\prime},\Bb^{\prime},\{\wb^{\prime}\}), \vb_{t+1}^{\Kb}(\Mb)|\Ab^{\prime},\Bb^{\prime},\{\wb^{\prime}\}]_k$ denote the $k$-th element of the concatenated vector \\$[\yb_{t+1}^{\Kb}(\Mb|\Ab^{\prime},\Bb^{\prime},\{\wb^{\prime}\}), \vb_{t+1}^{\Kb}(\Mb)|\Ab^{\prime},\Bb^{\prime},\{\wb^{\prime}\}]$.\\\\ 
    To bound the term $\norm{\diff{[\yb_{t+1}^{\Kb}(\Mb|\Ab^{\prime},\Bb^{\prime},\{\wb^{\prime}\}), \vb_{t+1}^{\Kb}(\Mb|\Ab^{\prime},\Bb^{\prime},\{\wb^{\prime}\})]_k}{\Mb}}$ for all $k$, based on the expression of $\yb_{t+1}^{\Kb}(\Mb|\Ab^{\prime},\Bb^{\prime},\{\wb^{\prime}\})$ and $\vb_{t+1}^{\Kb}(\Mb|\Ab^{\prime},\Bb^{\prime},\{\wb^{\prime}\})$ we have\\

        {\bf For $\yb_{t+1}^{\Kb}(\Mb|\Ab^{\prime},\Bb^{\prime},\{\wb^{\prime}\})$:}
        \begin{equation*}
            \resizebox{\linewidth}{!}
            {
            \begin{minipage}{\linewidth}
            \begin{align*}
                \norm{\diff{[\yb_{t+1}^{\Kb}(\Mb|\Ab^{\prime},\Bb^{\prime},\{\wb^{\prime}\})]_k}{\Mb}} = \norm{\left[\frac{\partial[\yb_{t+1}^{\Kb}(\Mb|\Ab^{\prime},\Bb^{\prime},\{\wb^{\prime}\})]_k}{\partial\Mb^{[0]}}, \ldots, \frac{\partial[\yb_{t+1}^{\Kb}(\Mb|\Ab^{\prime},\Bb^{\prime},\{\wb^{\prime}\})]_k}{\partial\Mb^{[H-1]}}\right]} 
            \end{align*}
            \end{minipage}
            }
        \end{equation*}
        where $\norm{\frac{\partial[\yb_{t+1}^{\Kb}(\Mb|\Ab^{\prime},\Bb^{\prime},\{\wb^{\prime}\})]_k}{\partial\Mb^{[r]}}}$ for all $r$ is bounded as follows
        \begin{equation*}
        \begin{split}
            \norm{\frac{\partial[\yb_{t+1}^{\Kb}(\Mb|\Ab^{\prime},\Bb^{\prime},\{\wb^{\prime}\})]_k}{\partial\Mb^{[r]}}} &= \norm{\frac{\partial[\sum_{i=0}^{2H} \sum_{j=0}^H (\Ab^{\prime} - \Bb^{\prime}\Kb)^j\Bb^{\prime}\Mb^{[i-j-1]}\mathbb{1}_{i-j\in [1,H]} \wb^{\prime}_{t-i}]_k}{\partial\Mb^{[r]}}}\\
            &= \norm{\frac{\partial[\sum_{i=0}^{2H} \sum_{i-j=0}^H (\Ab^{\prime}-\Bb^{\prime}\Kb)^{i-j}\Bb^{\prime}\Mb^{[j-1]}\mathbb{1}_{j\in [1,H]} \wb^{\prime}_{t-i}]_k}{\partial\Mb^{[r]}}}\\
            &\leq \sum_{i=0}^{2H} \sum_{i-j=0}^H \left[\norm{(\Ab^{\prime}-\Bb^{\prime}\Kb)^{i-j}}\norm{\Bb^{\prime}}\norm{\wb^{\prime}_{t-i}}\mathbb{1}_{j-1=r}\mathbb{1}_{j\in [1,H]}\right]\\
            &\leq \kappa_{B^{\prime}} W^{\prime} \sum_{i=0}^H\norm{(\Ab^{\prime}-\Bb^{\prime}\Kb)^{i}}\leq \frac{(\kappa')^2\kappa_{B^{\prime}} W^{\prime}}{\gamma'}.
        \end{split}
        \end{equation*}

        {\bf For $\vb_{t+1}^{\Kb}(\Mb|\Ab^{\prime},\Bb^{\prime},\{\wb^{\prime}\})$:}
        \begin{equation*}
            \resizebox{\linewidth}{!}
            {
            \begin{minipage}{\linewidth}
            \begin{align*}
                \norm{\diff{[\vb_{t+1}^{\Kb}(\Mb|\Ab^{\prime},\Bb^{\prime},\{\wb^{\prime}\})]_k}{\Mb}} = \norm{\left[\frac{\partial[\vb_{t+1}^{\Kb}(\Mb|\Ab^{\prime},\Bb^{\prime},\{\wb^{\prime}\})]_k}{\partial\Mb^{[0]}}, \ldots, \frac{\partial[\vb_{t+1}^{\Kb}(\Mb|\Ab^{\prime},\Bb^{\prime},\{\wb^{\prime}\})]_k}{\partial\Mb^{[H-1]}}\right]} 
            \end{align*}
            \end{minipage}
            }
        \end{equation*}
        where $\norm{\frac{\partial[\vb_{t+1}^{\Kb}(\Mb|\Ab^{\prime},\Bb^{\prime},\{\wb^{\prime}\})]_k}{\partial\Mb^{[r]}}}$ for all $r$ is bounded as follows
        \begin{equation*}
        \begin{split}
            \norm{\frac{\partial[\vb_{t+1}^{\Kb}(\Mb|\Ab^{\prime},\Bb^{\prime},\{\wb^{\prime}\})]_k}{\partial\Mb^{[r]}}} &= \norm{\frac{\partial[-\Kb\yb_{t+1}^{\Kb}(\Mb|\Ab^{\prime},\Bb^{\prime},\{\wb^{\prime}\}) + \sum_{i=1}^H \Mb^{[i-1]} \wb^{\prime}_{t+1-i}]_k}{\partial\Mb^{[r]}}}\\
            &\leq \frac{(\kappa')^3\kappa_{B^{\prime}} W^{\prime}}{\gamma'} + W^{\prime}.
        \end{split}    
        \end{equation*}

        Based on the results, we have
        \begin{equation*}
        \begin{split}
            &\norm{\diff{[\yb_{t+1}^{\Kb}(\Mb|\Ab^{\prime},\Bb^{\prime},\{\wb^{\prime}\}), \vb_{t+1}^{\Kb}(\Mb|\Ab^{\prime},\Bb^{\prime},\{\wb^{\prime}\})]_k}{\Mb}}\\
            \leq &\sqrt{H} \max_r \norm{\frac{\partial[\yb_{t+1}^{\Kb}(\Mb|\Ab^{\prime},\Bb^{\prime},\{\wb^{\prime}\}), \vb_{t+1}^{\Kb}(\Mb|\Ab^{\prime},\Bb^{\prime},\{\wb^{\prime}\})]_k}{\partial\Mb^{[r]}}}\leq \sqrt{H} (\frac{(\kappa')^3\kappa_{B^{\prime}} W^{\prime}}{\gamma'} + W^{\prime}),
        \end{split}    
        \end{equation*}
        from which we get the final result
        \begin{equation*}
        \resizebox{\linewidth}{!}
        {
        \begin{minipage}{\linewidth}
        \begin{align*}
            &\norm{\nabla_{\Mb}f_{i,t+1}(\Mb|\Ab^{\prime},\Bb^{\prime},\{\wb^{\prime}\})}_F\\ 
            \leq &\text{rank}(\nabla_{\Mb}f_{i,t+1}(\Mb|\Ab^{\prime},\Bb^{\prime},\{\wb^{\prime}\})) \norm{\nabla_{\Mb}f_{i,t+1}(\Mb|\Ab^{\prime},\Bb^{\prime},\{\wb^{\prime}\})}\\ 
            \leq &d \sum_k\norm{\frac{\partial f_{i,t+1}(\Mb|\Ab^{\prime},\Bb^{\prime},\{\wb^{\prime}\})}{\partial[\yb_{t+1}^{\Kb}(\Mb|\Ab^{\prime},\Bb^{\prime},\{\wb^{\prime}\}), \vb_{t+1}^{\Kb}(\Mb|\Ab^{\prime},\Bb^{\prime},\{\wb^{\prime}\})]_k}}\norm{\diff{[\yb_{t+1}^{\Kb}(\Mb|\Ab^{\prime},\Bb^{\prime},\{\wb^{\prime}\}), \vb_{t+1}^{\Kb}(\Mb|\Ab^{\prime},\Bb^{\prime},\{\wb^{\prime}\})]_k}{\Mb}}\\
            \leq &d \sum_k\norm{\frac{\partial f_{t+1}(\Mb)}{\partial[\yb_{t+1}^{\Kb}(\Mb|\Ab^{\prime},\Bb^{\prime},\{\wb^{\prime}\}), \vb_{t+1}^{\Kb}(\Mb|\Ab^{\prime},\Bb^{\prime},\{\wb^{\prime}\})]_k}} \sqrt{H} (\frac{(\kappa')^3\kappa_{B^{\prime}} W^{\prime}}{\gamma'} + W^{\prime})\\
            \leq &G_c D \sqrt{d}d\sqrt{H} (\frac{(\kappa')^3\kappa_{B^{\prime}} W^{\prime}}{\gamma'} + W^{\prime}),
        \end{align*}
        \end{minipage}
        }
        \end{equation*}
        where the last inequality is due to the assumption on function gradients within the ball of radius of $D$.
  
    \end{proof}

    \begin{lemma}\label{L: Consecutive distance}
    Let Algorithm \ref{alg:D-online control} run with step size $\eta>0$ and define $\Mb_t = \frac{1}{m}\sum_{i=1}^m\Mb_{i,t}$. Under Assumptions \ref{A1}, \ref{A2} and \ref{A3}, we have that $\forall i \in [m]$    
    \begin{equation*}
        \norm{\Mb_{t} - \Mb_{i,t}}_F\leq \frac{2\eta G_c D \sqrt{d}d\sqrt{H} (\frac{\kappa^4 W}{\gamma} + W)\sqrt{m}}{1-\beta}
    \end{equation*}
    and
    \begin{equation*}
        \norm{\Mb_t - \widehat{\Mb}_{i,t+1}}_F\leq \frac{2\eta G_c D \sqrt{d}d\sqrt{H} (\frac{\kappa^4 W}{\gamma} + W)\sqrt{m}}{1-\beta},
    \end{equation*}
    where $D = \frac{W^{}(\kappa^2 + 2H\kappa^6)}{\gamma(1-\kappa^2(1-\gamma)^{(H+1)})} + \frac{2W\kappa^3}{\gamma}$.
    \end{lemma}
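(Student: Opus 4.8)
This is a consensus--error estimate for distributed online gradient descent, so the plan is to control how far each local iterate $\Mb_{i,t}$ drifts from the network average $\Mb_t$. I would begin by setting $G := G_c D \sqrt{d}\,d\sqrt{H}\big(\tfrac{\kappa^4 W}{\gamma}+W\big)$ and observing that, by Lemma~\ref{L: Gradient bound for reparameterized functions} applied with $\kappa'=\kappa$, $\gamma'=\gamma$, $\kappa_{B'}=\kappa$, $W'=W$ and $C=2\kappa^3$ (admissible under Assumptions~\ref{A1} and~\ref{A3} and the definition of $\mathcal{M}$, and for which the specialized constant $D$ equals the one in the statement of this lemma), we have $\norm{\nabla_{\Mb} f_{i,t}(\Mb_{i,t}|\Ab,\Bb,\{\wb\})}_F \le G$ for every $i$ and $t$, since the projection in Line~9 of Algorithm~\ref{alg:D-online control} forces each $\Mb_{i,t}$ to stay in $\mathcal{M}$.

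Next I would rewrite one step of the algorithm as $\Mb_{i,t+1} = \sum_{j=1}^m [\Pb]_{ji}\Mb_{j,t} + \mathbf{r}_{i,t}$, where $\mathbf{r}_{i,t} := \Mb_{i,t+1} - \sum_{j=1}^m [\Pb]_{ji}\Mb_{j,t}$ bundles together the gradient step and the projection correction. Since $\mathcal{M}$ is convex and each $\Mb_{j,t}\in\mathcal{M}$, the convex combination $\sum_{j}[\Pb]_{ji}\Mb_{j,t}$ also belongs to $\mathcal{M}$; combining $\widehat{\Mb}_{i,t+1} = \sum_j [\Pb]_{ji}\Mb_{j,t} - \eta\nabla_{\Mb}f_{i,t}$, nonexpansiveness of $\Pi_{\mathcal{M}}$, and the triangle inequality gives $\norm{\mathbf{r}_{i,t}}_F \le \norm{\Mb_{i,t+1}-\widehat{\Mb}_{i,t+1}}_F + \eta\norm{\nabla_{\Mb}f_{i,t}}_F \le 2\eta G$. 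Using that all $\Mb_{i,1}$ coincide and that every power $\Pb^k$ is doubly stochastic, I would then unroll the recursion to
\begin{equation*}
    \Mb_{i,t} = \Mb_1 + \sum_{s=1}^{t-1}\sum_{j=1}^m [\Pb^{t-1-s}]_{ji}\,\mathbf{r}_{j,s},
    \qquad
    \Mb_t = \Mb_1 + \sum_{s=1}^{t-1}\frac{1}{m}\sum_{j=1}^m \mathbf{r}_{j,s}.
\end{equation*}

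Subtracting these two expressions, applying the triangle inequality, and invoking the geometric mixing bound $\sum_{j=1}^m |[\Pb^k]_{ji}-1/m|\le\sqrt{m}\,\beta^k$ from the Network Structure subsection yields $\norm{\Mb_t-\Mb_{i,t}}_F \le 2\eta G\sqrt{m}\sum_{s=1}^{t-1}\beta^{t-1-s}\le \frac{2\eta G\sqrt{m}}{1-\beta}$, which is the first bound. For the second, I would substitute the unrolled form of $\Mb_{j,t}$ into $\widehat{\Mb}_{i,t+1} = \sum_j [\Pb]_{ji}\Mb_{j,t} - \eta\nabla_{\Mb}f_{i,t}$, obtaining $\widehat{\Mb}_{i,t+1} = \Mb_1 + \sum_{s=1}^{t-1}\sum_{k=1}^m[\Pb^{t-s}]_{ki}\mathbf{r}_{k,s} - \eta\nabla_{\Mb}f_{i,t}$; differencing with $\Mb_t$ and bounding term-by-term as before leaves an extra residual $\eta\norm{\nabla_{\Mb}f_{i,t}}_F\le\eta G$, which is absorbed into $\frac{2\eta G\sqrt{m}}{1-\beta}$ because $2\sqrt{m}\,\tfrac{\beta}{1-\beta}+1\le\tfrac{2\sqrt{m}}{1-\beta}$ for every $m\ge1$.

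I expect the only delicate point to be the estimate $\norm{\mathbf{r}_{i,t}}_F\le 2\eta G$: it relies on noticing that $\sum_j[\Pb]_{ji}\Mb_{j,t}$ is already feasible, so the projection can displace the iterate by no more than the length of the gradient step; the rest is a routine geometric-series computation driven by the mixing rate of $\Pb$, together with a bookkeeping check that the stated constant $D$ is exactly the one produced by Lemma~\ref{L: Upper bounds of (surrogate)states and (surrogate)actions and the difference} under the same parameter substitution.
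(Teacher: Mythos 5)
Your proposal is correct and follows essentially the same route as the paper's proof: unroll the consensus-plus-gradient recursion, bound the projection residual by $\eta\norm{\nabla f_{i,t}}$ using that the convex combination $\sum_j[\Pb]_{ji}\Mb_{j,t}$ is already in $\mathcal{M}$, and sum the geometric series from the mixing bound of $\Pb$, with the gradient norm controlled by Lemma~\ref{L: Gradient bound for reparameterized functions}. The only (immaterial) difference is that you lump the gradient step and projection correction into a single residual of norm at most $2\eta G$, whereas the paper tracks them as two separate series each contributing $\eta G\sqrt{m}/(1-\beta)$; the final constants agree.
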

    \begin{proof}
    In this proof, we consider the vectorized versions of $\Mb_{i,t}$, $\Mb_{t}$ and $\nabla f_{i,t}(\Mb_{i,t}|\Ab,\Bb,\{\wb\})$. For simplicity, we define the following matrices
    \begin{equation*}
    \begin{split}
        \mathbb{M}_t &= [\Mb_{1,t},\ldots,\Mb_{m,t}]\\
        \widehat{\mathbb{M}}_t &= [\widehat{\Mb}_{1,t},\ldots,\widehat{\Mb}_{m,t}]\\
        \Gb_t &= [\nabla f_{1,t}(\Mb_{1,t}|\Ab,\Bb,\{\wb\}),\ldots,\nabla f_{m,t}(\Mb_{m,t}|\Ab,\Bb,\{\wb\})]\\
        \Rb_t &= [r_{1,t},\ldots,r_{m,t}],
    \end{split}
    \end{equation*}
    where $r_{i,t} = \widehat{\Mb}_{i,t} - \Mb_{i,t}$. Then the update can be expressed as $\mathbb{M}_t = \mathbb{M}_{t-1}\Pb - \eta \Gb_{t-1} - \Rb_t$.

    Expanding the update recursively, we get
    \begin{equation*}
        \mathbb{M}_t = \mathbb{M}_1\Pb^{t-1} - \sum_{l=1}^{t-1}\eta \Gb_{t-l}\Pb^{l-1} - \sum_{l=0}^{t-2} \Rb_{t-l}\Pb^{l}.
    \end{equation*}
    Since $\Pb$ is doubly stochastic, we have $\Pb^k\mathbf{1}=\mathbf{1}$ for all $k\geq 1$. Based on the geometric mixing bound of $\Pb$, we get
    \begin{equation*}
    \begin{split}
        &\norm{\Mb_{t} - \Mb_{i,t}}\\
        = &\norm{\mathbb{M}_t(\frac{1}{m}\mathbf{1}-\eb_i)}\\
        \leq &\norm{\mathbb{M}_{1}-\mathbb{M}_{1}[\Pb^{(t-1)}]_{:,i}} + \eta\sum_{l=1}^{t-1}\norm{\Gb_{t-l}(\frac{1}{m}\mathbf{1}-[\Pb^{l-1}]_{:,i})}
        +\sum_{l=0}^{t-2}\norm{\Rb_{t-l}(\frac{1}{m}\mathbf{1}-[\Pb^{l}]_{:,i})}\\
        \leq & \eta G_c D \sqrt{d}d\sqrt{H} (\frac{\kappa^4 W}{\gamma} + W)\sum_{l=1}^{t-1}\sqrt{m}\beta^{l-1} + \eta G_c D \sqrt{d}d\sqrt{H} (\frac{\kappa^4 W}{\gamma} + W) \sum_{l=0}^{t-2}\sqrt{m}\beta^l\\
        \leq &\frac{2\eta G_c D \sqrt{d}d\sqrt{H} (\frac{\kappa^4 W}{\gamma} + W)\sqrt{m}}{1-\beta}, 
    \end{split}    
    \end{equation*}
    where the second inequality is due to Lemma \ref{L: Gradient bound for reparameterized functions} and the fact that $\norm{\mathbb{M}_{1}-\mathbb{M}_{1}[\Pb^{(t-1)}]_{:,i}}=0$ by the identical initialization. Also based on Lemma \ref{L: Gradient bound for reparameterized functions} we have
    \begin{equation*}
    \begin{split}
        \norm{r_{i,t}} &= \norm{\widehat{\Mb}_{i,t} - \Mb_{i,t}}\\
        &\leq \norm{ \big(\sum_j [\Pb^{}]_{ji}\Mb_{j,t-1}-\eta\nabla f_{i,t-1}(\Mb_{i,t-1})\big) - \sum_j [\Pb^{}]_{ji}\Mb_{j,t-1}}\\
        &\leq \eta G_c D \sqrt{d}d\sqrt{H} (\frac{\kappa^4 W}{\gamma} + W),
    \end{split}
    \end{equation*}
    where the first inequality is due to projection on convex sets.\\\\
    By the same token, 
    \begin{equation*}
    \begin{split}
        &\norm{\Mb_t - \widehat{\Mb}_{i,t+1}}=\norm{\frac{1}{m}\mathbb{M}_t \mathbf{1}-(\mathbb{M}_t\Pb-\eta \Gb_t)\eb_i}\\
        =&\norm{\mathbb{M}_t(\frac{1}{m}\mathbf{1}-\Pb\eb_i)+\eta \Gb_t\eb_i}\\
        \leq &\norm{\Mb_{1} - \mathbb{M}_{1}[\Pb^{t}]_{:,i}} + \eta\sum_{l=1}^{t-1}\norm{\Gb_{t-l}(\frac{1}{m}\mathbf{1}-[\Pb^{l}]_{:,i})}\\
        +&\sum_{l=0}^{t-2}\norm{R_{t-l}(\frac{1}{m}\mathbf{1}-[\Pb^{l+1}]_{:,i})} + \norm{\eta \nabla f_{i,t}(\Mb_{i,t})}\\
        \leq &\eta G_c D \sqrt{d}d\sqrt{H} (\frac{\kappa^4 W}{\gamma} + W)\\
        + &\eta G_c D \sqrt{d}d\sqrt{H} (\frac{\kappa^4 W}{\gamma} + W)\left[\sum_{l=1}^{t-1}\sqrt{m}\beta^l + \sum_{l=0}^{t-2}\sqrt{m}\beta^{l+1}\right]\\
        \leq &\frac{2\eta G_c D \sqrt{d}d\sqrt{H} (\frac{\kappa^4 W}{\gamma} + W)\sqrt{m}}{1-\beta}.
    \end{split}    
    \end{equation*}

    \end{proof}

    \begin{theorem}\label{T: Regret bound for OCO with memory}(Regret bound for OCO with memory)
    Let Algorithm \ref{alg:D-online control} run with step size $\eta>0$. Under Assumptions \ref{A1}, \ref{A2} and \ref{A3}, we have that $\forall j \in [m]$   
    \begin{equation*}
    \begin{split}
        &\sum_t\sum_i f_{i,t}(\Mb_{j,t-H-1:t}) - f_{i,t}(\Mb^*)\\
        \leq &O(T\eta m\sqrt{m}\frac{\big(G_c D d^2H(\frac{\kappa^4 W}{\gamma} + W)\big)^2}{1-\beta}) + O\big(mT\eta H^3\sqrt{H}(G_c Dd)^2\kappa^4 W\frac{ (\frac{\kappa^4 W}{\gamma} + W)\sqrt{m}}{1-\beta}\big)\\
        + &O\bigg(\eta Tm\big(G_c D d^2H (\frac{\kappa^4 W}{\gamma} + W)\big)^2\bigg) + O(\frac{1}{\eta}) + O\big(\frac{1}{\eta}Tm(\frac{\eta G_c D d^2H (\frac{\kappa^4 W}{\gamma} + W)\sqrt{m}}{1-\beta})^2\big),
    \end{split}
    \end{equation*}
    where $D = \frac{W^{}(\kappa^2 + 2H\kappa^6)}{\gamma(1-\kappa^2(1-\gamma)^{(H+1)})} + \frac{2W\kappa^3}{\gamma}$.
    \end{theorem}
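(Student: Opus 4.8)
The plan is to establish this regret bound for the transformed distributed OCO-with-memory problem by splitting the per-round, per-agent suboptimality into three pieces and feeding each into Lemmas \ref{L: Lipschitz constant in terms of the framework of OCO with memory}, \ref{L: Gradient bound for reparameterized functions}, and \ref{L: Consecutive distance}. Writing $\Mb_t=\frac{1}{m}\sum_{i}\Mb_{i,t}$ and letting $f_{i,t}(\Mb)$ abbreviate the cost obtained by feeding a single iterate $\Mb$ into every memory slot, I would decompose
\begin{align*}
f_{i,t}(\Mb_{j,t-H-1:t}) - f_{i,t}(\Mb^*)
&= \big[f_{i,t}(\Mb_{j,t-H-1:t}) - f_{i,t}(\Mb_{j,t})\big] + \big[f_{i,t}(\Mb_{j,t}) - f_{i,t}(\Mb_{i,t})\big]\\
&\quad + \big[f_{i,t}(\Mb_{i,t}) - f_{i,t}(\Mb^*)\big].
\end{align*}
The first bracket is the usual memory-to-diagonal gap of OCO with memory, the second captures the deviation of agent $j$'s iterate from the other agents' iterates, and the third is a plain distributed-OGD regret on the unary costs.

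For the first bracket I would change one coordinate of the argument sequence at a time and apply Lemma \ref{L: Lipschitz constant in terms of the framework of OCO with memory}, producing a bound proportional to $G_c D\,\kappa^4 W\sum_{k}\norm{\Mb_{j,t-k}-\Mb_{j,t}}_F$ after a Cauchy--Schwarz over the $H$ blocks, and then telescope $\norm{\Mb_{j,t-k}-\Mb_{j,t}}_F\le\sum_s\norm{\Mb_{j,s+1}-\Mb_{j,s}}_F$ to reduce everything to consecutive iterate movements. Each consecutive movement is controlled, using non-expansiveness of $\Pi_{\mathcal M}$ together with $\Mb_{j,t}\in\mathcal M$, by $\norm{\sum_i[\Pb]_{ij}\Mb_{i,t}-\Mb_{j,t}}_F+\eta\norm{\nabla f_{j,t}(\Mb_{j,t})}_F$, which Lemma \ref{L: Consecutive distance} (consensus error) and Lemma \ref{L: Gradient bound for reparameterized functions} (gradient bound) bound by $O\big(\eta G_c D d^2\sqrt H(\kappa^4W/\gamma+W)\sqrt m/(1-\beta)\big)$. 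Summing the two telescopings over the $O(H)$ coordinates supplies the poly-$H$ factor and yields the second summand of the claimed bound; the same consecutive-movement estimate, now multiplied by the unary Lipschitz constant and applied to $\norm{\Mb_{j,t}-\Mb_t}_F+\norm{\Mb_t-\Mb_{i,t}}_F$, handles the second bracket and feeds the first summand.

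For the third bracket I would use convexity of $f_{i,t}$, $f_{i,t}(\Mb_{i,t})-f_{i,t}(\Mb^*)\le\langle\nabla f_{i,t}(\Mb_{i,t}),\Mb_{i,t}-\Mb^*\rangle$, and split the inner product as $\langle\nabla f_{i,t}(\Mb_{i,t}),\Mb_{i,t}-\Mb_t\rangle+\langle\nabla f_{i,t}(\Mb_{i,t}),\Mb_t-\Mb^*\rangle$, the first piece being again gradient bound times consensus error. Summing the second piece over $i$ gives $\langle\sum_i\nabla f_{i,t}(\Mb_{i,t}),\Mb_t-\Mb^*\rangle$, and for this I would run the standard potential argument on $\norm{\Mb_t-\Mb^*}^2$: double stochasticity of $\Pb$ yields $\frac{1}{m}\sum_i\widehat\Mb_{i,t+1}=\Mb_t-\frac{\eta}{m}\sum_i\nabla f_{i,t}(\Mb_{i,t})$, and $\Mb_{t+1}$ differs from this average only through the averaged projection residuals $\frac{1}{m}\sum_i r_{i,t}$, with $\norm{r_{i,t}}=O(\eta G_c D d^2\sqrt H(\kappa^4W/\gamma+W))$ from the proof of Lemma \ref{L: Consecutive distance}. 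Expanding the square, telescoping over $t$, and bounding $\mathrm{diam}(\mathcal M)$ produces the $O(1/\eta)$ term, the $\eta T$-times-gradient-squared term, and the $\frac{1}{\eta}\sum_t\|\cdot\|^2$-of-residual term that completes the bound.

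The main obstacle --- and the reason the final bound carries an extra $\frac{1}{\eta}$-times-consensus-squared term absent in the centralized analyses of \cite{anava2015online,agarwal2019online} --- is that projection does not commute with averaging, so the descent inequality on $\norm{\Mb_t-\Mb^*}^2$ holds only up to the averaged projection residuals, and these residuals inherit the $\sqrt m/(1-\beta)$ consensus scaling. The remaining difficulty is essentially bookkeeping: every consensus and consecutive-iterate estimate comes with a geometric series in $\beta$ from the mixing bound $\sum_j|[\Pb^k]_{ji}-1/m|\le\sqrt m\beta^k$, so one must sum those series (producing the $1/(1-\beta)$ factors) and carefully propagate the poly-$H$, $d$, $G_c D$, and $W$ factors through Lemmas \ref{L: Lipschitz constant in terms of the framework of OCO with memory}--\ref{L: Consecutive distance} to land on the exact exponents stated.
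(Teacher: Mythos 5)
Your three-way decomposition is exactly the one the paper uses, and your handling of the first two brackets --- coordinate-wise application of Lemma \ref{L: Lipschitz constant in terms of the framework of OCO with memory}, telescoping consecutive iterate movements, and controlling each movement by the consensus and gradient bounds of Lemmas \ref{L: Consecutive distance} and \ref{L: Gradient bound for reparameterized functions} --- reproduces the paper's Equations (2) and (5) of that proof. The divergence is in the third bracket. The paper never forms a potential on the averaged iterate: it telescopes the \emph{per-agent} distances $\norm{\Mb_{i,t}-\Mb^*}^2$, splitting $\Mb_{i,t}-\eta\nabla f_{i,t}(\Mb_{i,t})-\Mb^*=\big(\Mb_{i,t}-\sum_j[\Pb]_{ji}\Mb_{j,t}\big)+\big(\widehat{\Mb}_{i,t+1}-\Mb^*\big)$, using non-expansiveness to get $\norm{\Mb_{i,t+1}-\Mb^*}\le\norm{\widehat{\Mb}_{i,t+1}-\Mb^*}$ so that \emph{no projection residual enters the telescope at all}, and annihilating the cross term $\sum_i\big(\Mb_{i,t}-\sum_j[\Pb]_{ji}\Mb_{j,t}\big)^{\top}(\widehat{\Mb}_{t+1}-\Mb^*)$ exactly by double stochasticity; only the cross term against $\widehat{\Mb}_{i,t+1}-\widehat{\Mb}_{t+1}$ survives and is bounded by consensus, which is where the final $O\big(\tfrac{1}{\eta}Tm(\cdot)^2\big)$ term comes from.

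Your averaged-iterate version has a concrete gap. Writing $\zb_{t+1}=\frac{1}{m}\sum_i\widehat{\Mb}_{i,t+1}$ and $\Mb_{t+1}=\zb_{t+1}-\bar r_{t+1}$, the expansion $\norm{\Mb_{t+1}-\Mb^*}^2=\norm{\zb_{t+1}-\Mb^*}^2-2\langle\bar r_{t+1},\zb_{t+1}-\Mb^*\rangle+\norm{\bar r_{t+1}}^2$ contains a \emph{linear} cross term in the residual that your accounting ("the $\tfrac{1}{\eta}\sum_t\norm{\cdot}^2$-of-residual term") omits. Bounding it by $\norm{\bar r_{t+1}}\cdot\mathrm{diam}(\mathcal{M})=O(\eta G\cdot\mathrm{diam}(\mathcal{M}))$ --- which is what "bounding $\mathrm{diam}(\mathcal{M})$" delivers --- and dividing by $\eta$ in the telescope produces an $O(T)$ contribution with no $\eta$ factor, which destroys the sublinear bound. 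The repair is the obtuse-angle property of the projection: since $\langle\widehat{\Mb}_{i,t+1}-\Mb_{i,t+1},\,\Mb_{i,t+1}-\Mb^*\rangle\ge0$ for $\Mb^*\in\mathcal{M}$, one gets $\langle r_{i,t+1},\zb_{t+1}-\Mb^*\rangle\ge-\norm{r_{i,t+1}}\,\norm{\zb_{t+1}-\widehat{\Mb}_{i,t+1}}$, and the second factor is itself $O\big(\eta G\sqrt{m}/(1-\beta)\big)$ by Lemma \ref{L: Consecutive distance}, so the cross term becomes $O(\eta^2)$ per round and lands in the right place. Either insert that step explicitly or adopt the paper's per-agent telescoping, which sidesteps the residual entirely.
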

    \begin{proof}
    For simplicity, we denote $f_{i,t}(\cdot|\Ab,\Bb,\{\wb\})$ as $f_{i,t}(\cdot)$. Based on the update rule, $\forall i\in [m]$ and $t\in[1,\ldots, T]$ we have
    \begin{equation}\label{Eq1: Regret bound for OCO with memory}
    \resizebox{\linewidth}{!}
    {
    \begin{minipage}{\linewidth}
    $
     \begin{aligned}
        &f_{i,t}(\Mb_{i,t}) - f_{i,t}(\Mb^*)\\
        \leq &\nabla f_{i,t}(\Mb_{i,t})^{\top}(\Mb_{i,t} - \Mb^*)\\
        = &\frac{1}{\eta}\left[\frac{1}{2}\eta^2\norm{\nabla f_{i,t}(\Mb_{i,t})}^2 + \frac{1}{2}\norm{\Mb_{i,t}-\Mb^*}^2 - \frac{1}{2}\norm{\Mb_{i,t}-\Mb^* - \eta \nabla f_{i,t}(\Mb_{i,t})}^2\right]\\
        = &\frac{1}{2}\eta\norm{\nabla f_{i,t}(\Mb_{i,t})}^2 + \frac{1}{2\eta}\norm{\Mb_{i,t}-\Mb^*}^2 - \frac{1}{2\eta}\norm{\Mb_{i,t} - \sum_j (\Pb_{ji}\Mb_{j.t}) + \sum_j (\Pb_{ji}\Mb_{j.t}) - \eta \nabla f_{i,t}(\Mb_{i,t})-\Mb^*}^2\\
        = &\frac{1}{2}\eta\norm{\nabla f_{i,t}(\Mb_{i,t})}^2 + \frac{1}{2\eta}\norm{\Mb_{i,t}-\Mb^*}^2\\
        - &\frac{1}{2\eta}\left[\norm{\Mb_{i,t} - \sum_j (\Pb_{ji}\Mb_{j.t})}^2 + 2\big(\Mb_{i,t} - \sum_j (\Pb_{ji}\Mb_{j.t})\big)^{\top}(\widehat{\Mb}_{i,t+1} - \Mb^*) + \norm{\widehat{\Mb}_{i,t+1} - \Mb^*}^2\right]\\
        \leq &\frac{1}{2}\eta\norm{\nabla f_{i,t}(\Mb_{i,t})}^2 + \frac{1}{2\eta}\norm{\Mb_{i,t}-\Mb^*}^2 - \frac{1}{2\eta}\norm{\Mb_{i,t+1} - \Mb^*}^2\\
        - &\frac{1}{\eta}\big(\Mb_{i,t} - \sum_j (\Pb_{ji}\Mb_{j.t})\big)^{\top}(\widehat{\Mb}_{i,t+1} - \widehat{\Mb}_{t+1}) - \frac{1}{\eta}\big(\Mb_{i,t} - \sum_j (\Pb_{ji}\Mb_{j.t})\big)^{\top}(\widehat{\Mb}_{t+1} - \Mb^*),
    \end{aligned}   
    $
    \end{minipage}
    }
    \end{equation}
    where $\widehat{\Mb}_{t} = \frac{1}{m}\sum_i \widehat{\Mb}_{i,t}$ and the second inequality is due to the projection such that $\norm{\Mb_{i,t+1} - \Mb^*}\leq \norm{\widehat{\Mb}_{i,t+1} - \Mb^*}$.
    
    Based on Equation \eqref{Eq1: Regret bound for OCO with memory} and Lemmas \ref{L: Gradient bound for reparameterized functions} \& \ref{L: Consecutive distance}, we have
    \begin{equation}\label{Eq2: Regret bound for OCO with memory}
    \begin{split}
        &f_{i,t}(\Mb_{j,t}) - f_{i,t}(\Mb^*)\\
        =& f_{i,t}(\Mb_{j,t}) - f_{i,t}(\Mb_{i,t}) + f_{i,t}(\Mb_{i,t}) - f_{i,t}(\Mb^*)\\
        \leq& G_c D \sqrt{d}d\sqrt{H} (\frac{\kappa^4 W}{\gamma} + W)\norm{\Mb_{j,t} - \Mb_{i,t}} + f_{i,t}(\Mb_{i,t}) - f_{i,t}(\Mb^*)\\
        \leq & G_c D \sqrt{d}d\sqrt{H} (\frac{\kappa^4 W}{\gamma} + W)\frac{2\eta G_c D \sqrt{d}d\sqrt{H} (\frac{\kappa^4 W}{\gamma} + W)\sqrt{m}}{1-\beta}\\
        + &\frac{1}{2}\eta\norm{\nabla f_{i,t}(\Mb_{i,t})}^2 + \frac{1}{2\eta}\norm{\Mb_{i,t}-\Mb^*}^2 - \frac{1}{2\eta}\norm{\Mb_{i,t+1} - \Mb^*}^2\\
    - &\frac{1}{\eta}\big(\Mb_{i,t} - \sum_j (\Pb_{ji}\Mb_{j.t})\big)^{\top}(\widehat{\Mb}_{i,t+1} - \widehat{\Mb}_{t+1}) - \frac{1}{\eta}\big(\Mb_{i,t} - \sum_j (\Pb_{ji}\Mb_{j.t})\big)^{\top}(\widehat{\Mb}_{t+1} - \Mb^*).
    \end{split}    
    \end{equation}
    Summing Equation \eqref{Eq2: Regret bound for OCO with memory} over $i$, we get
    \begin{equation}\label{Eq3: Regret bound for OCO with memory}
    \begin{split}
        &\sum_i f_{i,t}(\Mb_{j,t}) - f_{i,t}(\Mb^*)\\
        \leq & mG_c D \sqrt{d}d\sqrt{H} (\frac{\kappa^4 W}{\gamma} + W)\frac{2\eta G_c D \sqrt{d}d\sqrt{H} (\frac{\kappa^4 W}{\gamma} + W)\sqrt{m}}{1-\beta}\\
        + &\frac{1}{2}\eta\sum_i\norm{\nabla f_{i,t}(\Mb_{i,t})}^2 
        + \frac{1}{2\eta}\sum_i\left[\norm{\Mb_{i,t}}^2-\norm{\Mb_{i,t+1}}^2 + 2(\Mb_{i,t+1} - \Mb_{i,t})^{\top}\Mb^*\right]\\    
    - &\frac{1}{\eta}\sum_i\big(\Mb_{i,t} - \sum_j (\Pb_{ji}\Mb_{j.t})\big)^{\top}(\widehat{\Mb}_{i,t+1} - \widehat{\Mb}_{t+1}).
    \end{split}    
    \end{equation}
    Summing Equation \eqref{Eq3: Regret bound for OCO with memory} over $t\in[1,\ldots,T]$, we have
    \begin{equation}\label{Eq4: Regret bound for OCO with memory}
    \begin{split}
        &\sum_t\sum_i f_{i,t}(\Mb_{j,t}) - f_{i,t}(\Mb^*)\\
        \leq & 2T\eta m\sqrt{m}\frac{\big(G_c D \sqrt{d}d\sqrt{H} (\frac{\kappa^4 W}{\gamma} + W)\big)^2}{1-\beta}        
        + \frac{1}{2}\eta\sum_t\sum_i\norm{\nabla f_{i,t}(\Mb_{i,t})}^2 
        + \frac{1}{2\eta}\sum_i\norm{\Mb_{i,1}}^2\\      
        + &\frac{1}{\eta}\sum_i\left(\Mb_{i,T+1}^{\top} - \Mb_{i,1}^{\top}\right)\Mb^*
        + \frac{1}{\eta}\sum_t\sum_i \norm{\big(\Mb_{i,t} - \sum_j (\Pb_{ji}\Mb_{j.t})\big)}\norm{(\widehat{\Mb}_{i,t+1} - \widehat{\Mb}_{t+1})}.       
    \end{split}
    \end{equation}
    Next, based on Lemma \ref{L: Lipschitz constant in terms of the framework of OCO with memory} and Lemma \ref{L: Consecutive distance}, we have
    \begin{equation}\label{Eq5: Regret bound for OCO with memory}
    \begin{split}
        &f_{i,t}(\Mb_{j,t-H-1:t}) - f_{i,t}(\Mb_{j,t}) = f_{i,t}(\Mb_{j,t-H-1},\ldots,\Mb_{j,t}) - f_{i,t}(\Mb_{j,t},\ldots,\Mb_{j,t})\\
        \leq &3G_c D \kappa^4W \sum_{k=1}^{H+1}\sum_{l=0}^{H-1}\norm{\Mb_{j,t-k}^{[l]} - \Mb_{j,t}^{[l]}}\\
        \leq &3G_c D \kappa^4W \sum_{l=0}^{H-1}\sum_{k=1}^{H+1}\sum_{s=0}^{k-1}\norm{\Mb_{j,t-s-1}^{[l]} - \Mb_{j,t-s}^{[l]}}_F\\
        \leq &3G_c D \kappa^4W \sum_{k=1}^{H+1}\sum_{s=0}^{k-1} 
        \sqrt{H} \norm{\Mb_{j,t-s-1} - \Mb_{j,t-s}}_F\\
        \leq &3G_c D \kappa^4W  \sum_{k=1}^{H+1}\sum_{s=0}^{k-1} 
        \sqrt{H} \norm{\Mb_{j,t-s-1} - \Mb_{t-s-1} + \Mb_{t-s-1} - \Mb_{j,t-s}}_F\\
        \leq &3G_c D \kappa^4W  \sum_{k=1}^{H+1}\sum_{s=0}^{k-1} 
        \left[\sqrt{H} \frac{4\eta G_c D \sqrt{d}d\sqrt{H} (\frac{\kappa^4 W}{\gamma} + W)\sqrt{m}}{1-\beta}\right]\\
        \leq &O\big(\eta H^3(G_c D)^2\sqrt{d}d\kappa^4 W\frac{ (\frac{\kappa^4 W}{\gamma} + W)\sqrt{m}}{1-\beta}\big).
    \end{split}    
    \end{equation}
    Summing Equation \eqref{Eq5: Regret bound for OCO with memory} over $i,t$ and combining it with Equation \eqref{Eq4: Regret bound for OCO with memory}, we have
    \begin{equation}\label{Eq6: Regret bound for OCO with memory}
    \begin{aligned}
    &\sum_t\sum_i f_{i,t}(\Mb_{j,t-H-1:t}) - f_{i,t}(\Mb^*)\\
    \leq & 2T\eta m\sqrt{m}\frac{\big(G_c D \sqrt{d}d\sqrt{H} (\frac{\kappa^4 W}{\gamma} + W)\big)^2}{1-\beta} + O\big(mT\eta H^3(G_c D)^2\sqrt{d}d\kappa^4 W\frac{ (\frac{\kappa^4 W}{\gamma} + W)\sqrt{m}}{1-\beta}\big)\\        
    + &\frac{1}{2}\eta\sum_t\sum_i\norm{\nabla f_{i,t}(\Mb_{i,t})}^2 
    + \frac{1}{2\eta}\sum_i\norm{\Mb_{i,1}}^2      
    + \frac{1}{\eta}\sum_i\left(\Mb_{i,T+1}^{\top} - \Mb_{i,1}^{\top}\right)\Mb^*\\
    + &\frac{1}{\eta}\sum_t\sum_i \norm{\big(\Mb_{i,t} - \sum_j (\Pb_{ji}\Mb_{j.t})\big)}\norm{(\widehat{\Mb}_{i,t+1} - \widehat{\Mb}_{t+1})}\\
    \leq &O(T\eta m\sqrt{m}\frac{\big(G_c D \sqrt{d}d\sqrt{H} (\frac{\kappa^4 W}{\gamma} + W)\big)^2}{1-\beta}) + O\big(mT\eta H^3(G_c D)^2\sqrt{d}d\kappa^4 W\frac{ (\frac{\kappa^4 W}{\gamma} + W)\sqrt{m}}{1-\beta}\big)\\
    + &O\bigg(\eta Tm\big(G_c D \sqrt{d}d\sqrt{H} (\frac{\kappa^4 W}{\gamma} + W)\big)^2\bigg) + O(\frac{1}{\eta}) + O\big(\frac{1}{\eta}Tm(\frac{\eta G_c D \sqrt{d}d\sqrt{H} (\frac{\kappa^4 W}{\gamma} + W)\sqrt{m}}{1-\beta})^2\big),
    \end{aligned}
    \end{equation}
    where the last inequality is based on Lemmas \ref{L: Gradient bound for reparameterized functions} and \ref{L: Consecutive distance}.
    \end{proof}

\begin{proof}[Proof of Theorem \ref{T: Regret bound (known case)}]
Suppose $\Kb^*$ is the optimal linear policy in hindsight, and we decompose the individual regret into the following terms
\begin{equation}\label{Eq1: Regret bound (known case)}
\begin{split}
    &J_T^j(\mathcal{A}_1) - \min_{\Kb\in\mathcal{K}}J_T(\Kb)\\
    =&\sum_{t=1}^{T}\sum_{i=1}^m\left[c_{i,t}\big(\xb^{\Kb}_{j,t}(\Mb_{j,1:t-1}), \ub^{\Kb}_{j,t}(\Mb_{j,1:t})\big) - c_{i,t}\big(\xb^{\Kb^*}_t(0),\ub^{\Kb^*}_t(0)\big)\right]\\
    =&\sum_{t=1}^{T}\sum_{i=1}^m\left[c_{i,t}\big(\xb^{\Kb}_{j,t}(\Mb_{j,1:t-1}), \ub^{\Kb}_{j,t}(\Mb_{j,1:t})\big) - c_{i,t}\big(\yb^{\Kb}_{j,t}(\Mb_{j,t-H-1:t}), \vb^{\Kb}_{j,t}(\Mb_{j,t-H-1:t})\big)\right]\\
    +&\sum_{t=1}^{T}\sum_{i=1}^m\left[c_{i,t}\big(\yb^{\Kb}_{j,t}(\Mb_{j,t-H-1:t}), \vb^{\Kb}_{j,t}(\Mb_{j,t-H-1:t}) - c_{i,t}\big(\yb^{\Kb}_{t}(\Mb^*), \vb^{\Kb}_{t}(\Mb^*)\big)\right]\\
    +&\sum_{t=1}^{T}\sum_{i=1}^m\left[c_{i,t}\big(\yb^{\Kb}_{t}(\Mb^*), \vb^{\Kb}_{t}(\Mb^*)\big) - c_{i,t}\big(\xb^{\Kb^*}_t(\Mb^*),\ub^{\Kb^*}_t(\Mb^*)\right]\\
    +&\sum_{t=1}^{T}\sum_{i=1}^m\left[c_{i,t}\big(\xb^{\Kb^*}_t(\Mb^*),\ub^{\Kb^*}_t(\Mb^*)-c_{i,t}\big(\xb^{\Kb^*}_t(0),\ub^{\Kb^*}_t(0)\big)\right],
\end{split}    
\end{equation}
where each term is bounded as follows:

1. Based on Lemma \ref{L: Upper bounds of (surrogate)states and (surrogate)actions and the difference}, we have that 
\begin{equation}\label{Eq2: Regret bound (known case)}
    \norm{\xb^{\Kb}_{j,t}(\Mb_{j,1:t-1}) - \yb^{\Kb}_{j,t}(\Mb_{j,t-H-1:t})},\norm{\ub^{\Kb}_{j,t}(\Mb_{j,1:t}) - \vb^{\Kb}_{j,t}(\Mb_{j,t-H-1:t})}\leq \kappa^3(1-\gamma)^{H+1}D,
\end{equation}
where $D$ is defined as $\frac{W^{}(\kappa^2 + 2H\kappa^6)}{\gamma(1-\kappa^2(1-\gamma)^{(H+1)})} + \frac{2W\kappa^3}{\gamma}$. Then based on Equation \eqref{Eq2: Regret bound (known case)} and Assumption \ref{A1}, we have
\begin{equation}\label{Eq3: Regret bound (known case)}
\begin{split}
    &c_{i,t}\big(\xb^{\Kb}_{j,t}(\Mb_{j,1:t-1}), \ub^{\Kb}_{j,t}(\Mb_{j,1:t})\big) - c_{i,t}\big(\yb^{\Kb}_{j,t}(\Mb_{j,t-H-1:t}), \vb^{\Kb}_{j,t}(\Mb_{j,t-H-1:t})\big)\\
    \leq &G_c D\sqrt{\norm{\xb^{\Kb}_{j,t}(\Mb_{j,1:t-1}) - \yb^{\Kb}_{j,t}(\Mb_{j,t-H-1:t})}^2 + \norm{\ub^{\Kb}_{j,t}(\Mb_{j,1:t}) - \vb^{\Kb}_{j,t}(\Mb_{j,t-H-1:t})}^2}\\
    \leq & G_c D\sqrt{2}\kappa^3(1-\gamma)^{H+1}D.
\end{split}
\end{equation}
Summing Equation \eqref{Eq3: Regret bound (known case)} over $i$ and $t$, we get
\begin{equation}\label{Eq4: Regret bound (known case)}
\begin{split}
    &\sum_{t=1}^{T}\sum_{i=1}^m\left[c_{i,t}\big(\xb^{\Kb}_{j,t}(\Mb_{j,1:t-1}), \ub^{\Kb}_{j,t}(\Mb_{j,1:t})\big) - c_{i,t}\big(\yb^{\Kb}_{j,t}(\Mb_{j,t-H-1:t}), \vb^{\Kb}_{j,t}(\Mb_{j,t-H-1:t})\big)\right] \\
    \leq &Tm G_c D\sqrt{2}\kappa^3(1-\gamma)^{H+1}D.
\end{split}
\end{equation}
Based on the same process, we also have
\begin{equation}\label{Eq5: Regret bound (known case)}
    \sum_{t=1}^{T}\sum_{i=1}^m\left[c_{i,t}\big(\yb^{\Kb}_{t}(\Mb^*), \vb^{\Kb}_{t}(\Mb^*)\big) - c_{i,t}\big(\xb^{\Kb^*}_t(\Mb^*),\ub^{\Kb^*}_t(\Mb^*)\right] \leq Tm G_c D\sqrt{2}\kappa^3(1-\gamma)^{H+1}D.
\end{equation}
\\
2. Based on Lemma \ref{L: Approximate a linear policy using time-invariant DFC policy}, we have
\begin{equation}\label{Eq6: Regret bound (known case)}
    \sum_{t=1}^{T}\sum_{i=1}^m\left[c_{i,t}\big(\xb^{\Kb^*}_t(\Mb^*),\ub^{\Kb^*}_t(\Mb^*)-c_{i,t}\big(\xb^{\Kb^*}_t(0),\ub^{\Kb^*}_t(0)\big)\right]\leq TmG_c D\frac{6WH \kappa^8(1-\gamma)^{H-1}}{\gamma}.
\end{equation}
\\
3. Based on Theorem \ref{T: Regret bound for OCO with memory}, we get
\begin{equation}\label{Eq7: Regret bound (known case)}
\resizebox{\linewidth}{!}
{
\begin{minipage}{\linewidth}
    $
    \begin{aligned}
    &\sum_{t=1}^{T}\sum_{i=1}^m\left[c_{i,t}\big(\yb^{\Kb}_{j,t}(\Mb_{j,t-H-1:t}), \vb^{\Kb}_{j,t}(\Mb_{j,t-H-1:t}) - c_{i,t}\big(\yb^{\Kb}_{t}(\Mb^*), \vb^{\Kb}_{t}(\Mb^*)\big)\right]\\
    \leq &O(T\eta m\sqrt{m}\frac{\big(G_c D \sqrt{d}d\sqrt{H} (\frac{\kappa^4 W}{\gamma} + W)\big)^2}{1-\beta}) + O\big(mT\eta H^3(G_c D)^2\sqrt{d}d\kappa^4 W\frac{ (\frac{\kappa^4 W}{\gamma} + W)\sqrt{m}}{1-\beta}\big)\\
    + &O\bigg(\eta Tm\big(G_c D \sqrt{d}d\sqrt{H} (\frac{\kappa^4 W}{\gamma} + W)\big)^2\bigg) + O(\frac{1}{\eta}) + O\big(\frac{1}{\eta}Tm(\frac{\eta G_c D \sqrt{d}d\sqrt{H} (\frac{\kappa^4 W}{\gamma} + W)\sqrt{m}}{1-\beta})^2\big).
    \end{aligned}
    $
\end{minipage}

}
\end{equation}

Combining Equations \eqref{Eq4: Regret bound (known case)}, \eqref{Eq5: Regret bound (known case)}, \eqref{Eq6: Regret bound (known case)} and \eqref{Eq7: Regret bound (known case)}, the order of the result is proved by the selections of $\eta$ and $H$.
\end{proof}

\subsection{System Identification:}
For the unknown case, since the learning process is built on agents' system estimates, it is necessary to quantify the estimation error in order to see how it affects the final regret bound. Here we provide a series of results for system estimates, including quantifying how a strongly stable w.r.t. one system performs on another similar system (Lemma \ref{L:strong stability for a close system}), the system estimation error of each agent (Theorem \ref{T: System identification}) and the noise estimation error (Lemma \ref{L: Diff between estimated and real noise}).

\begin{lemma}\label{L:strong stability for a close system}
A linear policy $\Kb$ which is $(\kappa, \gamma)$-strongly stable ($\kappa\geq 1$) for the LTI system $(\Ab_1, \Bb_1)$, is also $(\kappa, \gamma-2\kappa^3\epsilon)$-strongly stable for the LTI system $(\Ab_2, \Bb_2)$ if $\norm{\Ab_1 - \Ab_2\}}, \norm{\Bb_1 - \Bb_2}\leq \epsilon$ and $\epsilon < \frac{\gamma}{2\kappa^3}$.\end{lemma}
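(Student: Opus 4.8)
The plan is to reuse the similarity transform that witnesses strong stability of $\Kb$ for $(\Ab_1,\Bb_1)$ and absorb the perturbation into the ``middle'' matrix. By Definition~\ref{D: Strong Stability}, there exist $\Hb,\Lb_1$ with $\Ab_1-\Bb_1\Kb=\Hb\Lb_1\Hb^{-1}$, $\norm{\Lb_1}\le 1-\gamma$, and $\norm{\Hb},\norm{\Hb^{-1}}\le\kappa$, while $\norm{\Kb}\le\kappa$. First I would write the closed-loop matrix of the second system as a perturbation of the first:
\begin{equation*}
    \Ab_2-\Bb_2\Kb = (\Ab_1-\Bb_1\Kb) + \Delta, \qquad \Delta := (\Ab_2-\Ab_1) - (\Bb_2-\Bb_1)\Kb,
\end{equation*}
and bound $\norm{\Delta}\le \norm{\Ab_2-\Ab_1} + \norm{\Bb_2-\Bb_1}\norm{\Kb} \le \epsilon + \epsilon\kappa \le 2\kappa\epsilon$, using $\kappa\ge 1$.

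Next I would conjugate $\Delta$ by the \emph{same} $\Hb$: since $\Hb$ is invertible,
\begin{equation*}
    \Ab_2-\Bb_2\Kb = \Hb\bigl(\Lb_1 + \Hb^{-1}\Delta\Hb\bigr)\Hb^{-1} =: \Hb\,\Lb_2\,\Hb^{-1}.
\end{equation*}
Then $\norm{\Lb_2}\le \norm{\Lb_1} + \norm{\Hb^{-1}}\norm{\Delta}\norm{\Hb} \le (1-\gamma) + \kappa\cdot 2\kappa\epsilon\cdot\kappa = 1-(\gamma-2\kappa^3\epsilon)$. Keeping the same $\Hb$ (hence the same bounds $\norm{\Hb},\norm{\Hb^{-1}}\le\kappa$) and the same $\Kb$ (with $\norm{\Kb}\le\kappa$), the triple $(\Hb,\Lb_2,\Hb^{-1})$ certifies that $\Kb$ is $(\kappa,\gamma-2\kappa^3\epsilon)$-strongly stable for $(\Ab_2,\Bb_2)$, provided the new decay parameter is a valid one. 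The condition $\epsilon<\gamma/(2\kappa^3)$ gives $\gamma-2\kappa^3\epsilon>0$, and $\gamma\le 1$ together with $2\kappa^3\epsilon>0$ gives $\gamma-2\kappa^3\epsilon<1$, so $0<\gamma-2\kappa^3\epsilon\le 1$ as required by Definition~\ref{D: Strong Stability}. This completes the argument.

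There is essentially no deep obstacle here; the only point requiring a bit of care is recognizing that one should keep $\Hb$ fixed and push the entire perturbation into $\Lb$, rather than trying to re-diagonalize $\Ab_2-\Bb_2\Kb$ from scratch, and then verifying that the resulting $\gamma-2\kappa^3\epsilon$ stays in the admissible range $(0,1]$.
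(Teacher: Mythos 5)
Your proof is correct and follows essentially the same route as the paper's: keep the similarity transform $\Hb$ from the strong stability of $\Kb$ for $(\Ab_1,\Bb_1)$, push the perturbation $(\Ab_2-\Ab_1)$ and $(\Bb_2-\Bb_1)\Kb$ into the middle matrix via $\Hb^{-1}(\cdot)\Hb$, and bound its norm by $(1-\gamma)+\kappa^2(\epsilon+\epsilon\kappa)\le 1-(\gamma-2\kappa^3\epsilon)$. The only (cosmetic) difference is that you also explicitly check $\gamma-2\kappa^3\epsilon\in(0,1]$, which the paper leaves implicit.
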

\begin{proof}
    Based on the definition of strong stability, we have $\Ab_1 + \Bb_1\Kb=\Hb\Lb\Hb^{-1}$ with $\norm{\Hb},\norm{\Hb^{-1}}\leq \kappa$ and $\norm{\Lb}\leq 1-\gamma$. Therefore, we have
    \begin{equation*}
    \begin{split}
        &\Ab_2+\Bb_2\Kb\\
        =&\Ab_1+\Bb_1\Kb + (\Ab_2 - \Ab_1) + (\Bb_2 - \Bb_1)\Kb\\
        =&\Hb\Lb\Hb^{-1} + \Hb\Hb^{-1}[(\Ab_2 - \Ab_1) + (\Bb_2 - \Bb_1)\Kb]\Hb\Hb^{-1}\\
        =&\Hb\big[\Lb + \Hb^{-1}[(\Ab_2 - \Ab_1) + (\Bb_2 - \Bb_1)\Kb]\Hb\big]\Hb^{-1}.
    \end{split}    
    \end{equation*}
    For the middle term
    , it can be shown that
    \begin{equation*}
    \begin{split}
        &\norm{\Lb + \Hb^{-1}[(\Ab_2 - \Ab_1) + (\Bb_2 - \Bb_1)\Kb]\Hb}\\
        \leq &\norm{\Lb} + \norm{\Hb^{-1}}\norm{(\Ab_2 - \Ab_1) + (\Bb_2 - \Bb_1)\Kb}\norm{\Hb}\\
        \leq & (1-\gamma) + \kappa^2(\norm{\Ab_2 - \Ab_1} + \norm{(\Bb_2 - \Bb_1)\Kb})\\
        \leq & (1-\gamma) + \kappa^2(\epsilon + \epsilon\kappa)\\
        \leq & (1-\gamma) + 2\kappa^3\epsilon.
    \end{split}    
    \end{equation*}
    Suppose $\gamma - 2\kappa^3\epsilon > 0$, based on the result above, we can see that the policy $\Kb$ is $(\kappa, \gamma - 2\kappa^3\epsilon)$-strongly stable w.r.t. the LTI system $(\Ab_2, \Bb_2)$ as $\Ab_2 + \Bb_2\Kb = \Hb\Lb_2\Hb^{-1}$ where $\Lb_2:=\Lb + \Hb^{-1}[(\Ab_2 - \Ab_1) + (\Bb_2 - \Bb_1)\Kb]\Hb$ and $\norm{\Lb_2}\leq 1-(\gamma - 2\kappa^3\epsilon)$. 
\end{proof}
\begin{lemma}\label{L: Series of moment differences}(Lemma 17 in \cite{hazan2020nonstochastic})
For a matrix pair $(\Mb,\Delta \Mb)$ such that $\norm{\Mb},\norm{\Mb+\Delta\Mb}\leq 1-\gamma$, where $\gamma <1$, we have
\begin{equation*}
    \norm{(\Mb+\Delta\Mb)^t - \Mb^t}\leq t(1-\gamma)^{t-1}\norm{\Delta\Mb}
\end{equation*}
and 
\begin{equation*}
    \sum_{t=0}^{\infty} \norm{(\Mb+\Delta\Mb)^t - \Mb^t}\leq \gamma^{-2}\norm{\Delta\Mb}.
\end{equation*}
\end{lemma}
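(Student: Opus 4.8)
The plan is to prove the first inequality via the standard telescoping identity for the difference of matrix powers, and then deduce the second inequality by summing a geometric-type series. Write $\Nb := \Mb + \Delta\Mb$, so that $\Nb - \Mb = \Delta\Mb$. The starting point is the identity, valid for every $t \geq 1$,
\begin{equation*}
    \Nb^t - \Mb^t = \sum_{k=0}^{t-1} \Nb^{k}(\Nb - \Mb)\Mb^{t-1-k} = \sum_{k=0}^{t-1} \Nb^{k}\,\Delta\Mb\,\Mb^{t-1-k},
\end{equation*}
which follows by telescoping: the $k$-th summand equals $\Nb^{k+1}\Mb^{t-1-k} - \Nb^{k}\Mb^{t-k}$, and summing over $k = 0,\ldots,t-1$ collapses to $\Nb^t - \Mb^t$.

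From here I would apply the triangle inequality together with submultiplicativity of the spectral norm and the hypotheses $\norm{\Nb}, \norm{\Mb} \leq 1-\gamma$:
\begin{equation*}
    \norm{\Nb^t - \Mb^t} \leq \sum_{k=0}^{t-1} \norm{\Nb}^{k}\norm{\Delta\Mb}\norm{\Mb}^{t-1-k} \leq \sum_{k=0}^{t-1} (1-\gamma)^{t-1}\norm{\Delta\Mb} = t(1-\gamma)^{t-1}\norm{\Delta\Mb}.
\end{equation*}
This is the first claim; it also holds at $t=0$ since both sides vanish.

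For the second claim, sum the first bound over $t$, noting the $t=0$ term contributes nothing:
\begin{equation*}
    \sum_{t=0}^{\infty}\norm{\Nb^t - \Mb^t} \leq \norm{\Delta\Mb}\sum_{t=1}^{\infty} t(1-\gamma)^{t-1} = \frac{\norm{\Delta\Mb}}{\bigl(1-(1-\gamma)\bigr)^2} = \gamma^{-2}\norm{\Delta\Mb},
\end{equation*}
where I use the power-series identity $\sum_{t\geq 1} t x^{t-1} = (1-x)^{-2}$ with $x = 1-\gamma$; this is legitimate because the stated hypotheses implicitly force $0 < \gamma < 1$, so $|1-\gamma| < 1$ and the series converges.

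I do not expect a genuine obstacle here: the argument is entirely routine once the telescoping identity is written in the correct order. The only points worth a moment's care are getting the exponent bookkeeping right in the submultiplicativity step and observing that convergence of the series requires $1-\gamma < 1$, i.e.\ $\gamma > 0$.
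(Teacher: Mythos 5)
Your proof is correct. Note that the paper itself offers no proof of this statement: it is imported verbatim as Lemma 17 of \cite{hazan2020nonstochastic}, so there is no internal argument to compare against. Your telescoping decomposition $\Nb^t - \Mb^t = \sum_{k=0}^{t-1}\Nb^k\,\Delta\Mb\,\Mb^{t-1-k}$, followed by submultiplicativity and the identity $\sum_{t\geq 1}t(1-\gamma)^{t-1} = \gamma^{-2}$, is the standard route and is exactly what one would expect the cited reference to do; your side remark that the second inequality additionally needs $\gamma > 0$ (only $\gamma < 1$ is written in the hypothesis) is a fair and correct observation about how the lemma is stated.
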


\begin{lemma}\label{L: Bounds on states and actions during the exploration phase}
During the exploration phase where the control is chosen as $\ub_{i,t} = -\Kb\xb_{i,t} + \xi_{i,t}$, where $\xi_{i,t}\sim\{\pm 1\}^{d_2}$, it can be shown that for $t\leq T_0+1$, we have 
\begin{equation*}
    \norm{\xb_{i,t}}\leq \frac{2\kappa^3 W\sqrt{d_2}}{\gamma},\;\forall i\in[m].
\end{equation*}
\end{lemma}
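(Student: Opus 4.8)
The plan is to unroll the closed-loop recursion and bound it with a geometric series using the strong stability of $\Kb$. First I would substitute the exploration control law $\ub_{i,t} = -\Kb\xb_{i,t} + \xi_{i,t}$ into the dynamics $\xb_{i,t+1} = \Ab\xb_{i,t} + \Bb\ub_{i,t} + \wb_t$ to obtain the closed-loop form $\xb_{i,t+1} = (\Ab - \Bb\Kb)\xb_{i,t} + \Bb\xi_{i,t} + \wb_t$. Writing $\Tilde{\Ab}_{\Kb} := \Ab - \Bb\Kb$ and invoking Assumption \ref{A3}, we have $\Tilde{\Ab}_{\Kb} = \Hb\Lb\Hb^{-1}$ with $\norm{\Hb},\norm{\Hb^{-1}}\le\kappa$ and $\norm{\Lb}\le 1-\gamma$, hence $\norm{\Tilde{\Ab}_{\Kb}^k}\le \kappa^2(1-\gamma)^k$ for every $k\ge 0$.

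Next, starting from $\xb_{i,1}=0$ and expanding the recursion (the state is only propagated during the $T_{collect}$ exploration rounds, and remains unchanged afterwards, so it suffices to treat $t$ in the exploration range and the bound carries over to all $t\le T_0+1$), I would write
\begin{equation*}
    \xb_{i,t} = \sum_{k=0}^{t-2} \Tilde{\Ab}_{\Kb}^{k}\big(\Bb\xi_{i,t-1-k} + \wb_{t-1-k}\big).
\end{equation*}
Taking norms and using $\norm{\Bb}\le\kappa$ (Assumption \ref{A1}), $\norm{\xi_{i,s}}=\sqrt{d_2}$ deterministically (each coordinate is $\pm 1$), and $\norm{\wb_s}\le W$ (Assumption \ref{A1}), each summand is bounded by $\kappa^2(1-\gamma)^k(\kappa\sqrt{d_2}+W)$, so
\begin{equation*}
    \norm{\xb_{i,t}} \le (\kappa\sqrt{d_2}+W)\,\kappa^2\sum_{k=0}^{\infty}(1-\gamma)^k = \frac{\kappa^2(\kappa\sqrt{d_2}+W)}{\gamma}.
\end{equation*}

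Finally, using the standing convention $\kappa,W>1$ and $\sqrt{d_2}\ge 1$, we get $\kappa^2(\kappa\sqrt{d_2}+W) = \kappa^3\sqrt{d_2} + \kappa^2 W \le 2\kappa^3 W\sqrt{d_2}$, which yields the claimed bound $\norm{\xb_{i,t}}\le \frac{2\kappa^3 W\sqrt{d_2}}{\gamma}$. There is no genuine obstacle here; the only points requiring a little care are that the perturbation $\Bb\xi_{i,t}$ has the \emph{deterministic} norm bound $\kappa\sqrt{d_2}$ (so the resulting state bound is worst-case, not merely in expectation or high probability), and the final constant clean-up that matches the stated coefficient via $\kappa,W>1$.
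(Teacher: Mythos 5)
Your proof is correct and follows essentially the same route as the paper's: unroll the closed-loop recursion $\xb_{i,t+1}=(\Ab-\Bb\Kb)\xb_{i,t}+\Bb\xi_{i,t}+\wb_t$, bound $\norm{(\Ab-\Bb\Kb)^k}\le\kappa^2(1-\gamma)^k$ via strong stability, sum the geometric series, and absorb $\kappa^3\sqrt{d_2}+\kappa^2W\le 2\kappa^3W\sqrt{d_2}$ using $\kappa,W>1$. The only difference is cosmetic (your indexing of the powers and your aside about what happens during the exchange rounds), neither of which affects the bound.
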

\begin{proof}
    Based on Equation \eqref{Eq1: Diff between estimated and real noise}, we have for any $t\leq T_0+1$
    \begin{equation}\label{Eq1: Bounds on states and actions during the exploration phase}
    \begin{split}
        \norm{\xb_{t}} &= \norm{\sum_{i=1}^{t-1} (\Ab-\Bb\Kb)^{t-i}(\wb_i + \Bb\Tilde{\ub}_i)}\\
        &\leq \norm{\sum_{i=1}^{t-1} (\Ab-\Bb\Kb)^{t-i} \wb_i} + \sum_{i=1}^{t-1} \norm{(\Ab-\Bb\Kb)^{t-i}}\norm{\Bb}\sqrt{d_2}\\
        &\leq \frac{\kappa^2 W}{\gamma} + \frac{\kappa^3 \sqrt{d_2}}{\gamma}\\
        &\leq \frac{2\kappa^3 W\sqrt{d_2}}{\gamma}.
    \end{split}    
    \end{equation}
        
\end{proof}

\begin{lemma}\label{L: Martingale estimation} Let $\Nb_{k-1} = \frac{1}{m}\sum_i\Nb^{T_{collect}}_{i,k-1},\;\forall k\in [q+1]$. Suppose Assumptions \ref{A1} and \ref{A3} hold. By running Algorithm \ref{alg:D-online control (unknown system)}, we have with probability at least $(1-\delta)$,
\begin{equation*}
    \norm{\Nb^{T_0}_{i,k-1} - (\Ab-\Bb\Kb)^{k-1}\Bb}\leq \frac{2d_2\kappa^3W}{\gamma}\sqrt{\frac{8\log\big(d_1 d_2 (q+1)m\delta^{-1}\big)}{m\big(T_{collect}-(q+1)\big)}},\; \forall k\in[q+1] \text{ and } i\in[m].
\end{equation*} 
\end{lemma}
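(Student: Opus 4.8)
The plan is to show that $\Nb^{T_{collect}}_{i,k-1}$ is an essentially unbiased estimator of $(\Ab-\Bb\Kb)^{k-1}\Bb$, that averaging over the $m$ independent agents and the $N:=T_{collect}-(q+1)$ time samples suppresses the fluctuation via a martingale concentration bound, and finally to transfer the bound from the exact average $\Nb_{k-1}$ to $\Nb^{T_0}_{i,k-1}$ using the gossip step. First I would unroll the closed-loop recursion of the exploration phase: with $\ub_{i,t}=-\Kb\xb_{i,t}+\xi_{i,t}$ we have $\xb_{i,t+1}=\tilde\Ab\xb_{i,t}+\Bb\xi_{i,t}+\wb_t$ for $\tilde\Ab:=\Ab-\Bb\Kb$, and iterating $k$ steps (taking $\xb_{i,1}=0$ for simplicity),
\[
\xb_{i,t+k}=\tilde\Ab^{k-1}\Bb\,\xi_{i,t}+\tilde\Ab^{k}\xb_{i,t}+\tilde\Ab^{k-1}\wb_t+\sum_{j=0}^{k-2}\tilde\Ab^{j}\big(\Bb\,\xi_{i,t+k-1-j}+\wb_{t+k-1-j}\big).
\]
Since $\xi_{i,t}$ appears only in the first term, $\E[\xi_{i,t}\xi_{i,t}^\top]=\Ib$, and all other $\xi$'s and (under the standard convention that the disturbances do not anticipate the injected exploration noise) all $\wb$'s present are independent of $\xi_{i,t}$, we get $\E[\xb_{i,t+k}\xi_{i,t}^\top]=\tilde\Ab^{k-1}\Bb$.

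The main difficulty is that $\{\xb_{i,t+k}\xi_{i,t}^\top-\tilde\Ab^{k-1}\Bb\}_t$ is \emph{not} a martingale difference sequence for the natural filtration, because $\xb_{i,t+k}$ depends on the future noises $\xi_{i,t+1},\dots,\xi_{i,t+k-1}$. I would handle this by writing $\xb_{i,t+k}=\sum_{r=1}^{t+k-1}\tilde\Ab^{\,t+k-1-r}(\Bb\,\xi_{i,r}+\wb_r)$, so that $N\big(\Nb^{T_{collect}}_{i,k-1}-\tilde\Ab^{k-1}\Bb\big)$ becomes a double sum over pairs $(t,r)$, then assigning each summand to the largest exploration-noise index it carries and letting $\Db_u$ collect the summands with index $u$ (equivalently, splitting the outer sum into the $q+1$ residue classes modulo $q+1$, within each of which the samples lie more than $k-1$ apart). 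In every summand the factor carried at the latest such index is a fresh, mean-zero (or, for the $r=t$ piece, identity-mean) function of $\xi_{i,u}$, so $\{\Db_u-\E[\Db_u\mid\mathcal{H}_{u-1}]\}$ with $\mathcal{H}_u=\sigma(\{\xi_{j,s}:j\in[m],\,s\le u\})$ is a genuine martingale difference sequence whose partial sums telescope to $N(\Nb^{T_{collect}}_{i,k-1}-\tilde\Ab^{k-1}\Bb)$. Using strong stability of $\Kb$ w.r.t.\ $(\Ab,\Bb)$, i.e.\ $\norm{\tilde\Ab^{j}}\le\kappa^2(1-\gamma)^{j}$, a geometric summation, $\norm{\Bb}\le\kappa$, $\norm{\wb_t}\le W$, $\norm{\xi_{i,t}}=\sqrt{d_2}$, and Lemma \ref{L: Bounds on states and actions during the exploration phase} for the magnitude of $\xb_{i,t+k}$, each increment obeys $\norm{\Db_u-\E[\Db_u\mid\mathcal{H}_{u-1}]}=O\!\big(\kappa^3 W\sqrt{d_2}/\gamma\big)$; keeping the factor $(1-\gamma)^{k-1}$ inside the geometric series, rather than peeling off $\tilde\Ab^{k-1}$ and bounding $\xb_{i,t+k}$ crudely, is what avoids an extra power of $\kappa$.

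Because the $m$ agents' exploration noises are independent, concatenating these martingale difference sequences across agents produces one sequence of length at most $m(N+q)$ whose normalized sum equals $\Nb_{k-1}-\tilde\Ab^{k-1}\Bb$. Applying a vector-valued Azuma--Hoeffding (Hayes-type) inequality and union-bounding over the $d_1 d_2$ entries, the $q+1$ values of $k$, and the $m$ agents yields the factor $\log\!\big(d_1 d_2 (q+1) m\,\delta^{-1}\big)$ and the $1/\sqrt{m\,(T_{collect}-(q+1))}$ rate; carrying the $\kappa$- and dimension-dependent constants through the concentration bound (and using $\norm{\cdot}\le\norm{\cdot}_F$) delivers the prefactor $\tfrac{2d_2\kappa^3 W}{\gamma}$. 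Finally, to pass from $\Nb_{k-1}$ to $\Nb^{T_0}_{i,k-1}$, note that $\Nb^{T_0}_{i,k-1}=\sum_j[\Pb^{T_{exchange}}]_{ji}\Nb^{T_{collect}}_{j,k-1}$, so $\norm{\Nb^{T_0}_{i,k-1}-\Nb_{k-1}}\le\sqrt m\,\beta^{T_{exchange}}\max_j\norm{\Nb^{T_{collect}}_{j,k-1}}\le\sqrt m\,\beta^{T_{exchange}}\tfrac{2\kappa^3 W d_2}{\gamma}$, which for $T_{exchange}=\Theta(\log T)$ is of strictly lower order and is absorbed into the constant.

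The step I expect to be the main obstacle is the martingale reorganization in the second paragraph: reconciling the forward-in-time dependence of the estimator with an adapted filtration (via the latest-index assignment or the residue blocking) while keeping the per-increment bound at the $O(\kappa^3 W\sqrt{d_2}/\gamma)$ level is the delicate part; the rest is bookkeeping with the geometric decay of $\tilde\Ab^{j}$ and a standard concentration inequality.
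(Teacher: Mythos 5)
Your proposal is correct and follows essentially the same route as the paper: establish concentration of the agent-averaged moment estimate $\Nb_{k-1}$ around $(\Ab-\Bb\Kb)^{k-1}\Bb$ via a martingale argument over the concatenated exploration noises of the $m$ agents, then bound $\norm{\Nb^{T_0}_{i,k-1}-\Nb_{k-1}}$ by $\sqrt{m}\,\beta^{T_{exchange}}\tfrac{2\kappa^3 W d_2}{\gamma}$ using the geometric mixing of $\Pb$ and absorb it by the choice of $T_{exchange}$. The only difference is that the paper outsources the concentration step to Lemma 21 of \cite{hazan2020nonstochastic}, whereas you unpack it; your reindexing by the latest exploration-noise index to obtain a genuinely adapted martingale difference sequence is precisely the content of that cited lemma (and is, if anything, stated more carefully than the paper's one-line assertion that $\{\xb_{i,t+k}\xi_{i,t}^{\top}-(\Ab-\Bb\Kb)^{k-1}\Bb\}_t$ is itself an MDS).
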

\begin{proof}
    Let $\Nb^{t}_{i,k-1} = \xb_{i,t+k}\xi^{\top}_{i,t} - (\Ab-\Bb\Kb)^{k-1}\Bb$. Based on Lemma 21 in \cite{hazan2020nonstochastic}, we know $\{\Nb^{t}_{i,k-1}\}_t$ is a martingale difference sequence w.r.t. $\{\xi_{i,t}\}_t$. Notice that each agent possesses an identical and independent martingale difference sequence. If we consider the concatenation of those sequences: $\{\{\Nb^{t}_{1,k-1}\}_t, \{\Nb^{t}_{2,k-1}\}_t, \ldots, \{\Nb^{t}_{m,k-1}\}_t\}$, we can see that sequence is also a martingale sequence w.r.t. $\{\{\xi_{1,t}\}_t, \{\xi_{2,t}\}_t,\ldots,\{\xi_{m,t}\}_t \}$. Then with the result of Lemma 21 in \cite{hazan2020nonstochastic}, we have with probability at least $(1-\delta)$
    \begin{equation}\label{Eq1: Martingale estimation}
        \norm{\Nb_{k-1} - (\Ab-\Bb\Kb)^{k-1}\Bb} \leq \frac{d_2\kappa^3W}{\gamma}\sqrt{\frac{8\log\big(d_1 d_2 (q+1)m\delta^{-1}\big)}{m\big(T_{collect}-(q+1)\big)}},\;\forall k\in[q+1],
    \end{equation}
    where $\Nb_{k-1} = \frac{1}{m}\sum_i\Nb^{T_{collect}}_{i,k-1}$ and $\Nb_{k-1} - (\Ab-\Bb\Kb)^{k-1}\Bb$ is the average over the elements of\\ $\{\{\Nb^{t}_{1,k-1}\}_t, \{\Nb^{t}_{2,k-1}\}_t, \ldots, \{\Nb^{t}_{m,k-1}\}_t\}$.\\\\

    Now based on the geometric bound of the network topology, we have
    \begin{equation}\label{Eq2: Martingale estimation}
    \begin{split}
        \norm{\Nb^{t}_{i,k-1} - \Nb_{k-1}} &= \norm{\sum_{j=1}^m [\Pb^{t-T_{collect}}]_{ji}\Nb^{T_{collect}}_{j,k-1} -  \frac{1}{m}\sum_j\Nb^{T_{collect}}_{j,k-1}}\\
        &\leq \sum_{j=1}^m |[\Pb^{t-T_{collect}}]_{ji} - \frac{1}{m}|\norm{\Nb^{T_{collect}}_{j,k-1}}\\
        &\leq \sqrt{m}\beta^{t-T_{collect}}\frac{2\kappa^3Wd_2}{\gamma},
    \end{split}
    \end{equation}
    where the last inequality is due to the fact that based on Lemma \ref{L: Bounds on states and actions during the exploration phase} we have $\norm{\xb_{i,t}}\leq \frac{2\kappa^3W\sqrt{d_2}}{\gamma}$ , and also $\norm{\xi_{i,t}}\leq \sqrt{d_2}$.

    Based on Equations \eqref{Eq1: Martingale estimation} and \eqref{Eq2: Martingale estimation}, we have $\forall i$
    \begin{equation}\label{Eq3: Martingale estimation}
    \begin{split}
        \norm{\Nb^{T_0}_{i,k-1} - (\Ab-\Bb\Kb)^{k-1}\Bb}&\leq \sqrt{m}\beta^{T_{exchange}}\frac{2\kappa^3Wd_2}{\gamma} + \frac{d_2\kappa^3W}{\gamma}\sqrt{\frac{8\log\big(d_1 d_2 (q+1)m\delta^{-1}\big)}{m\big(T_{collect}-(q+1)\big)}}\\
        &\leq \frac{2d_2\kappa^3W}{\gamma}\sqrt{\frac{8\log\big(d_1 d_2 (q+1)m\delta^{-1}\big)}{m\big(T_{collect}-(q+1)\big)}},        
    \end{split}
    \end{equation}
    where the second term in the first inequality can be the dominant one by specifically choosing the lengths of $T_{collect}$ and $T_{exchange}$. 
\end{proof}

\begin{lemma}\label{L: Diff between solutions of the actual and perturbed linear systems}(Lemma 22 in \cite{hazan2020nonstochastic})
Let $\xb^*$ be the solution to the linear system: $\Ab \xb = \bb$, and $\hat{\xb}$ be the solution to $(\Ab + \Delta \Ab)\xb = (\bb + \Delta\bb)$, then as long as $\norm{\Delta\Ab}<\sigma_{\text{min}}(\Ab)$, we have
\begin{equation*}
    \norm{\xb^* - \hat{\xb}} \leq \frac{\norm{\Delta \bb} +\norm{\Delta\Ab}\norm{\xb^*}}{\sigma_{\text{min}}(\Ab) - \norm{\Delta \Ab}}.
\end{equation*}
\end{lemma}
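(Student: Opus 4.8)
The plan is to isolate the error vector $\xb^* - \hat{\xb}$ as the image under $(\Ab+\Delta\Ab)^{-1}$ of an explicit expression in the perturbations, and then to control $\norm{(\Ab+\Delta\Ab)^{-1}}$ through a one-line singular-value perturbation estimate.

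First I would subtract the two defining relations. Starting from $\Ab\xb^* = \bb$ and $(\Ab+\Delta\Ab)\hat{\xb} = \bb + \Delta\bb$, I rewrite $\Ab\xb^* = (\Ab+\Delta\Ab)\xb^* - (\Delta\Ab)\xb^*$, so that subtraction yields $(\Ab+\Delta\Ab)(\hat{\xb} - \xb^*) = \Delta\bb - (\Delta\Ab)\xb^*$. The remaining task is then purely a matter of inverting $\Ab+\Delta\Ab$ and taking norms on the right-hand side, where the triangle inequality gives $\norm{\Delta\bb - (\Delta\Ab)\xb^*} \leq \norm{\Delta\bb} + \norm{\Delta\Ab}\norm{\xb^*}$.

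Next I would show that $\Ab+\Delta\Ab$ is invertible together with a quantitative bound. The key fact is the $1$-Lipschitzness of the smallest singular value under additive spectral-norm perturbations: for every unit vector $\vb$, $\norm{(\Ab+\Delta\Ab)\vb} \geq \norm{\Ab\vb} - \norm{(\Delta\Ab)\vb} \geq \sigma_{\text{min}}(\Ab) - \norm{\Delta\Ab}$, hence $\sigma_{\text{min}}(\Ab+\Delta\Ab) \geq \sigma_{\text{min}}(\Ab) - \norm{\Delta\Ab}$, which is strictly positive by the hypothesis $\norm{\Delta\Ab} < \sigma_{\text{min}}(\Ab)$. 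Consequently $\Ab+\Delta\Ab$ is invertible with $\norm{(\Ab+\Delta\Ab)^{-1}} = 1/\sigma_{\text{min}}(\Ab+\Delta\Ab) \leq 1/\big(\sigma_{\text{min}}(\Ab) - \norm{\Delta\Ab}\big)$. Combining this with the previous display gives $\norm{\xb^* - \hat{\xb}} \leq \big(\norm{\Delta\bb} + \norm{\Delta\Ab}\norm{\xb^*}\big)/\big(\sigma_{\text{min}}(\Ab) - \norm{\Delta\Ab}\big)$, which is exactly the claim.

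There is no genuine obstacle here; this is a standard matrix-perturbation estimate, quoted as Lemma 22 of \cite{hazan2020nonstochastic}. The only point requiring mild care is the possibly non-square case relevant to Algorithm \ref{alg:parameter estimation} (where ``the solution'' should be read as the unique consistent one and $\Ab+\Delta\Ab$ has full column rank): there $(\Ab+\Delta\Ab)^{-1}$ is interpreted as a left inverse carrying the same $1/\sigma_{\text{min}}$ norm bound, and the argument is otherwise unchanged. For completeness I would include the short derivation above rather than only citing the source.
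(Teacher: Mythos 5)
Your derivation is correct. The paper itself gives no proof of this lemma --- it is imported verbatim as Lemma~22 of \cite{hazan2020nonstochastic} --- and your argument (subtract the two systems to get $(\Ab+\Delta\Ab)(\hat{\xb}-\xb^*)=\Delta\bb-(\Delta\Ab)\xb^*$, then bound via $\sigma_{\text{min}}(\Ab+\Delta\Ab)\geq\sigma_{\text{min}}(\Ab)-\norm{\Delta\Ab}>0$) is exactly the standard perturbation estimate the citation refers to. Your closing remark about the non-square case is apt, since in Theorem~\ref{T: System identification} the lemma is applied row-wise to $\Xb\Cb_{i,0}=\Cb_{i,1}$, where the coefficient matrix is tall with full column rank and $\widehat{\Ab}'_i$ is the least-squares solution; there the cleanest phrasing is to avoid the inverse altogether and use $\norm{\hat{\xb}-\xb^*}\leq\norm{(\Ab+\Delta\Ab)(\hat{\xb}-\xb^*)}/\sigma_{\text{min}}(\Ab+\Delta\Ab)$ directly.
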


\begin{theorem}\label{T: System identification}
Suppose Assumptions \ref{A1}, \ref{A3} and \ref{A4} hold. By running Algorithm \ref{alg:D-online control (unknown system)}, we have with probability at least $(1-\delta)$,
\begin{equation*}
    \norm{\widehat{\Ab}_i - \Ab}, \norm{\widehat{\Bb}_i - \Bb} \leq \epsilon_{\Ab,\Bb},\;\forall i\in[m],
\end{equation*}
where $\epsilon_{\Ab,\Bb} = O\big(\frac{d_2W\sqrt{d_1 q}\kappa^{11/2}}{\gamma}\sqrt{\frac{\log\big(d_1 d_2 (q+1)m\delta^{-1}\big)}{m\big(T_{collect}-(q+1)\big)}}\big)$.
\end{theorem}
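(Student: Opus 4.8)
The plan is to reduce everything to the per-block estimation error already quantified in Lemma~\ref{L: Martingale estimation}. Write $\epsilon_N := \frac{2d_2\kappa^3 W}{\gamma}\sqrt{\frac{8\log(d_1 d_2(q+1)m\delta^{-1})}{m(T_{collect}-(q+1))}}$ for that bound, so that on the $(1-\delta)$-probability event of Lemma~\ref{L: Martingale estimation} we have $\norm{\Nb^{T_0}_{i,k-1} - (\Ab-\Bb\Kb)^{k-1}\Bb}\le\epsilon_N$ for every $i\in[m]$ and $k\in[q+1]$; the remainder of the argument is deterministic on this event, which is the source of the probability statement. Since $\widehat{\Bb}_i = \Nb^{T_0}_{i,0}$ and $(\Ab-\Bb\Kb)^0\Bb=\Bb$, the case $k=1$ gives $\norm{\widehat{\Bb}_i - \Bb}\le\epsilon_N$ at once. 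It then remains to bound $\norm{\widehat{\Ab}'_i-(\Ab-\Bb\Kb)}$, because $\widehat{\Ab}_i = \widehat{\Ab}'_i + \widehat{\Bb}_i\Kb$ and $\Ab = (\Ab-\Bb\Kb) + \Bb\Kb$, so that $\norm{\widehat{\Ab}_i-\Ab}\le\norm{\widehat{\Ab}'_i-(\Ab-\Bb\Kb)}+\kappa\epsilon_N$.

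For the $\widehat{\Ab}'_i$ term I would introduce the noiseless limits $\Cb_{q,0}:=[\Bb,(\Ab-\Bb\Kb)\Bb,\ldots,(\Ab-\Bb\Kb)^{q-1}\Bb]$ (this is precisely $\Cb_q$ for the system $(\Ab-\Bb\Kb,\Bb)$) and $\Cb_{q,1}:=[(\Ab-\Bb\Kb)\Bb,\ldots,(\Ab-\Bb\Kb)^{q}\Bb]=(\Ab-\Bb\Kb)\Cb_{q,0}$. Concatenating the per-block bounds and using $\norm{[\Eb_1,\ldots,\Eb_q]}\le\sqrt{q}\max_k\norm{\Eb_k}$ yields $\norm{\Cb_{i,0}-\Cb_{q,0}},\norm{\Cb_{i,1}-\Cb_{q,1}}\le\sqrt{q}\,\epsilon_N$, while strong stability of $\Kb$ w.r.t. $(\Ab,\Bb)$ (Assumption~\ref{A3}) gives $\norm{(\Ab-\Bb\Kb)^k\Bb}\le\kappa^3(1-\gamma)^k$ and hence bounds $\norm{\Cb_{q,0}},\norm{\Cb_{q,1}}$ and $\norm{\Ab-\Bb\Kb}\le\kappa^2$ by constants depending only on $\kappa,\gamma$. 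Next, observe that $\widehat{\Ab}'_i$ is the least-squares solution of $X\Cb_{i,0}=\Cb_{i,1}$, so that the $\ell$-th row of $\widehat{\Ab}'_i$ solves the normal equations $(\Cb_{i,0}\Cb_{i,0}^{\top})\yb = \Cb_{i,0}(\Cb_{i,1})_{\ell,:}^{\top}$, whereas the $\ell$-th row of $\Ab-\Bb\Kb$ solves $(\Cb_{q,0}\Cb_{q,0}^{\top})\yb = \Cb_{q,0}(\Cb_{q,1})_{\ell,:}^{\top}$ exactly, since $\Cb_{q,0}$ is full row rank by Assumption~\ref{A4}. Applying Lemma~\ref{L: Diff between solutions of the actual and perturbed linear systems} row by row, with coefficient matrix $\Cb_{q,0}\Cb_{q,0}^{\top}$, perturbation $\Cb_{i,0}\Cb_{i,0}^{\top}-\Cb_{q,0}\Cb_{q,0}^{\top}$ and the analogous right-hand-side perturbation, and using $\sigma_{\min}(\Cb_{q,0}\Cb_{q,0}^{\top})=\norm{(\Cb_{q,0}\Cb_{q,0}^{\top})^{-1}}^{-1}\ge\kappa^{-1}$ from $(q,\kappa)$-strong controllability of $(\Ab-\Bb\Kb,\Bb)$ (Assumption~\ref{A4}), I would bound $\norm{\widehat{\Ab}'_i-(\Ab-\Bb\Kb)}$ by $O(\sqrt{d_1 q}\,\text{poly}(\kappa,\gamma^{-1})\,\epsilon_N)$, the extra $\sqrt{d_1}$ coming from summing the $d_1$ row-wise bounds in Frobenius norm. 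Combining this with $\norm{\widehat{\Bb}_i-\Bb}\le\epsilon_N$ and substituting the expression for $\epsilon_N$ then recovers $\epsilon_{\Ab,\Bb} = O\big(\tfrac{d_2 W\sqrt{d_1 q}\,\kappa^{11/2}}{\gamma}\sqrt{\tfrac{\log(d_1 d_2(q+1)m\delta^{-1})}{m(T_{collect}-(q+1))}}\big)$ after absorbing the $\kappa,\gamma$ factors into the stated power of $\kappa$.

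The main obstacle is the perturbation analysis of the pseudoinverse, i.e. controlling $(\Cb_{i,0}\Cb_{i,0}^{\top})^{-1}$ in terms of $(\Cb_{q,0}\Cb_{q,0}^{\top})^{-1}$: this requires $\norm{\Cb_{i,0}\Cb_{i,0}^{\top}-\Cb_{q,0}\Cb_{q,0}^{\top}}$ to stay strictly below $\sigma_{\min}(\Cb_{q,0}\Cb_{q,0}^{\top})\ge\kappa^{-1}$, which is exactly where Assumption~\ref{A4} and the choice of a sufficiently large $T_{collect}$ (making $\epsilon_N$ small) enter, and one must track carefully how the block-concatenation structure produces the $\sqrt{q}$ (and $\sqrt{d_1}$) dimension factors so as not to lose in the final rate.
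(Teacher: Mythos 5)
Your proposal matches the paper's proof essentially step for step: the high-probability event from Lemma~\ref{L: Martingale estimation}, the $k=1$ block giving $\norm{\widehat{\Bb}_i-\Bb}\le\epsilon$, the $\sqrt{q}$ concatenation bounds on $\Cb_{i,0},\Cb_{i,1}$, the row-wise application of the perturbation lemma producing the $\sqrt{d_1}$ factor, and the recombination $\widehat{\Ab}_i=\widehat{\Ab}'_i+\widehat{\Bb}_i\Kb$ costing an extra $\kappa\epsilon$. The one deviation is that you route the perturbation analysis through the normal equations $(\Cb_{i,0}\Cb_{i,0}^{\top})\yb=\Cb_{i,0}(\Cb_{i,1})_{\ell,:}^{\top}$, whereas the paper applies Lemma~\ref{L: Diff between solutions of the actual and perturbed linear systems} directly to $\Xb\Cb_{i,0}=\Cb_{i,1}$ with $\sigma_{\min}(\Cb_q)\ge\kappa^{-1/2}$; your route squares the conditioning and introduces extra $\norm{\Cb_q}=O(\kappa^3/\sqrt{\gamma})$ factors, so it gives the same rate in $T_{collect}$ but a somewhat larger power of $\kappa$ (and an extra $\gamma^{-1/2}$) than the stated $\kappa^{11/2}/\gamma$, which cannot simply be ``absorbed'' if one cares about that explicit constant.
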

\begin{proof}
    Denote $(\Ab - \Bb\Kb)$ as $\Ab^{\prime}$. Based on Assumption , we know $\Ab^{\prime}$ is the unique solution $\Xb^*$ of the following linear system
    \begin{equation*}
        \Xb\Cb_q = [\Ab^{\prime}\Bb,(\Ab^{\prime})^2\Bb,\ldots,(\Ab^{\prime})^q\Bb] = \Ab^{\prime}\Cb_q.
    \end{equation*}
    Suppose $\forall i\in[m]\text{ and }k\in[q+1]$, $\norm{\Nb^{T_0}_{i,k-1} - (\Ab^{\prime})^{k-1}\Bb}\leq \epsilon$. Then it is implied that $\forall i$, $\norm{\Cb_{i,0} - \Cb_q},\norm{\Cb_{i,1} - \Ab^{\prime}\Cb_q}\leq \sqrt{q}\epsilon$ and $\norm{\widehat{\Bb}_i - \Bb}\leq \epsilon$. Note that $\widehat{\Ab}^{\prime}_i$ is the solution to the following perturbed system
    \begin{equation*}
        \Xb\Cb_{i,0} = \Cb_{i,1}.
    \end{equation*}
    By applying Lemma \ref{L: Diff between solutions of the actual and perturbed linear systems} on the rows of the system above, we have
    \begin{equation}\label{Eq1: System identification}
    \begin{split}
        \norm{\widehat{\Ab}^{\prime}_i - {\Ab}^{\prime}} &\leq \frac{\sqrt{d_1}(\norm{\Cb_{i,1} - \Ab^{\prime}\Cb_q} + \norm{\Cb_{i,0} - \Cb_q}\norm{\Ab^{\prime}})}{\sigma_{\text{min}}(\Cb_q) - \norm{\Cb_{i,0} - \Cb_q}}\\
        &\leq \frac{\sqrt{d_1}(\sqrt{q}\epsilon + \sqrt{q}\epsilon2\kappa^2)}{1/\sqrt{\kappa} - \sqrt{q}\epsilon}\\
        &\leq 6\sqrt{d_1 q}\kappa^{5/2}\epsilon,
    \end{split}
    \end{equation}
    where the last inequality is by assuming $\sqrt{q}\epsilon \leq \frac{1}{2\sqrt{\kappa}}$ (This can be achieved by specifically choosing $T_{collect}$ and $T_{exchange}$).
    Based on Equation \eqref{Eq1: System identification}, we have
    \begin{equation}\label{Eq2: System identification}
    \begin{split}
        \norm{\widehat{\Ab}_i - \Ab} &= \norm{(\widehat{\Ab}^{\prime}_i + \widehat{\Bb}_i\Kb) - (\Ab^{\prime} + \Bb\Kb)}\\
        &\leq  \norm{\widehat{\Ab}^{\prime}_i - {\Ab}^{\prime}} + \norm{\widehat{\Bb}_i\Kb - \Bb\Kb}\\
        &\leq 6\sqrt{d_1 q}\kappa^{5/2}\epsilon + \epsilon\kappa\leq 7\sqrt{d_1 q}\kappa^{5/2}\epsilon:=\epsilon_{\Ab,\Bb}.
    \end{split}    
    \end{equation}

    From Lemma \ref{L: Martingale estimation}, we have $\epsilon = O\big(\frac{d_2\kappa^3W}{\gamma}\sqrt{\frac{\log\big(d_1 d_2 (q+1)m\delta^{-1}\big)}{m\big(T_{collect}-(q+1)\big)}}\big)$, which concludes the claim.
\end{proof}

\begin{lemma}\label{L: Diff between estimated and real noise}Suppose Assumptions \ref{A1}, \ref{A3} and \ref{A4} hold. By running Algorithm \ref{alg:D-online control (unknown system)} with specifically chosen $T_{collect}$ and $T_{exchange}$ such that $\norm{\Ab - \widehat{\Ab}_i},\norm{\Bb - \widehat{\Bb}_i}\leq \epsilon\leq \frac{1}{12}\kappa^{-7}\gamma^2,\;\forall i\in[m]$, we have for any $t\geq T_0+1$, $\norm{\wb_t - \hat{\wb}_{i,t}}\leq \zeta\epsilon$, where $\zeta = \frac{24 W\sqrt{d_2}\kappa^7}{\gamma^2}$.
    
\end{lemma}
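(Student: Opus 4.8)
The plan is to bound $\norm{\wb_t-\hat{\wb}_{i,t}}$ by the system–estimation error $\epsilon$ times a uniform bound on the size of the states and controls in the exploitation phase, and then to close a short fixed–point argument. First, note that $\wb_t$ is the one–step residual $\xb_{i,t+1}-\Ab\xb_{i,t}-\Bb\ub_{i,t}$ of the true dynamics, whereas $\hat{\wb}_{i,t}$ is the same residual with $(\Ab,\Bb)$ replaced by the agent's estimate $(\widehat{\Ab}_i,\widehat{\Bb}_i)$ (line~9 of Algorithm~\ref{alg:D-online control (unknown system)}); subtracting these, the term $\xb_{i,t+1}$ cancels and leaves
\begin{equation*}
  \wb_t-\hat{\wb}_{i,t}=(\widehat{\Ab}_i-\Ab)\xb_{i,t}+(\widehat{\Bb}_i-\Bb)\ub_{i,t},\qquad\text{hence}\qquad \norm{\wb_t-\hat{\wb}_{i,t}}\le\epsilon\big(\norm{\xb_{i,t}}+\norm{\ub_{i,t}}\big).
\end{equation*}
So it suffices to bound $\norm{\xb_{i,t}}+\norm{\ub_{i,t}}$ uniformly over $t\in[T_0+1,T]$.

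To get that bound I would unroll the closed loop: writing $\ub_{i,s}=-\Kb\xb_{i,s}+(\ub_{i,s}+\Kb\xb_{i,s})$ and iterating $\xb_{i,t+1}=\Ab\xb_{i,t}+\Bb\ub_{i,t}+\wb_t$ from $\xb_{i,1}=0$ gives
\begin{equation}\label{Eq1: Diff between estimated and real noise}
  \xb_{i,t}=\sum_{s=1}^{t-1}(\Ab-\Bb\Kb)^{t-1-s}\Big(\wb_s+\Bb\big(\ub_{i,s}+\Kb\xb_{i,s}\big)\Big),
\end{equation}
where the ``extra'' control $\ub_{i,s}+\Kb\xb_{i,s}$ equals $\xi_{i,s}$ for $s\le T_0$ and $\sum_{j=1}^H\Mb_{i,s}^{[j-1]}\hat{\wb}_{i,s-j}$ for $s>T_0$ (Lemma~\ref{L: Bounds on states and actions during the exploration phase} records the resulting bound for the exploration phase). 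Let $W_{\max}:=\max_{i\in[m]}\max_{t\le T}\norm{\hat{\wb}_{i,t}}$, which is finite since the horizon is finite. Splitting the sum in \eqref{Eq1: Diff between estimated and real noise} at $T_0$: the $s\le T_0$ part collapses to $(\Ab-\Bb\Kb)^{t-1-T_0}$ applied to the exploration–end state $\xb_{i,T_0+1}$, of norm at most $\frac{2\kappa^5W\sqrt{d_2}}{\gamma}$ by Lemma~\ref{L: Bounds on states and actions during the exploration phase} and Assumption~\ref{A3}; the $s>T_0$ part is controlled using Assumption~\ref{A3} ($\norm{(\Ab-\Bb\Kb)^k}\le\kappa^2(1-\gamma)^k$, summing geometrically to $\kappa^2/\gamma$), $\norm{\Bb}\le\kappa$, and $\Mb_{i,s}\in\mathcal{M}$ (so $\sum_{j=1}^H\norm{\Mb_{i,s}^{[j-1]}}\le 2\kappa^3/\gamma$). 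Together with $\norm{\ub_{i,t}}\le\kappa\norm{\xb_{i,t}}+\frac{2\kappa^3}{\gamma}W_{\max}$, this yields $\norm{\xb_{i,t}},\norm{\ub_{i,t}}=O\!\big(\tfrac{\kappa^6W\sqrt{d_2}}{\gamma}+\tfrac{\kappa^7}{\gamma^2}W_{\max}\big)$ for all $t\in[T_0+1,T]$; note that $\sqrt{d_2}$ enters only through the exploration transient, and that no factor of $H$ appears because the disturbance part of the control is grouped before norms are taken.

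Combining the last two estimates gives $\norm{\wb_t-\hat{\wb}_{i,t}}\le O\!\big(\tfrac{\kappa^6W\sqrt{d_2}}{\gamma}+\tfrac{\kappa^7}{\gamma^2}W_{\max}\big)\epsilon$, and since $\hat{\wb}_{i,t}=0$ for $t\le T_0$ while $\norm{\hat{\wb}_{i,t}}\le W+\norm{\wb_t-\hat{\wb}_{i,t}}$ for $t>T_0$, maximizing over $t$ produces the self–referential inequality
\begin{equation*}
  W_{\max}\ \le\ W+O\!\Big(\tfrac{\kappa^6W\sqrt{d_2}}{\gamma}\Big)\epsilon+O\!\Big(\tfrac{\kappa^7}{\gamma^2}\Big)\epsilon\,W_{\max}.
\end{equation*}
The hypothesis $\epsilon\le\frac{1}{12}\kappa^{-7}\gamma^2$ is precisely what makes the coefficient of $W_{\max}$ on the right strictly less than $1$, so the inequality can be solved for $W_{\max}=O(W\sqrt{d_2})$ (using $\kappa,W\ge1$ and $\sqrt{d_2}\ge1$). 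Substituting this back into $\norm{\wb_t-\hat{\wb}_{i,t}}\le\epsilon(\norm{\xb_{i,t}}+\norm{\ub_{i,t}})$ and carrying the constants through then yields $\norm{\wb_t-\hat{\wb}_{i,t}}\le\zeta\epsilon$ with $\zeta=\frac{24W\sqrt{d_2}\kappa^7}{\gamma^2}$, as claimed.

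The crux — and the main obstacle — is exactly this circular coupling: the estimated noises feed the control through the DFC, the control drives the state, and the state defines the estimated noises, so none of the three can be bounded in isolation. It is handled by the a priori argument above, which is legitimate because $W_{\max}$ is a maximum of finitely many finite numbers, so once its coefficient in the self–referential inequality is known to be below $1$ that inequality genuinely pins down $W_{\max}$; an equivalent route is a strong induction on $t$ with hypothesis $\norm{\hat{\wb}_{i,s}}\le 4W\sqrt{d_2}$ for all $s<t$, the base case being $\hat{\wb}_{i,s}=0$ for $s\le T_0$. Either way, the numerical threshold $\epsilon\le\frac{1}{12}\kappa^{-7}\gamma^2$ is what keeps the loop contractive, and the rest is routine bookkeeping of the $\sqrt{d_2}$ factor and the powers of $\kappa$ needed to reach the stated constant.
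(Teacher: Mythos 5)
Your proposal is correct and follows essentially the same route as the paper's proof: the same residual identity $\wb_t-\hat{\wb}_{i,t}=(\widehat{\Ab}_i-\Ab)\xb_{i,t}+(\widehat{\Bb}_i-\Bb)\ub_{i,t}$, the same closed-loop unrolling through $(\Ab-\Bb\Kb)$ with the exploration-phase state bound from Lemma \ref{L: Bounds on states and actions during the exploration phase}, and the same use of $\epsilon\leq\frac{1}{12}\kappa^{-7}\gamma^2$ to make the self-coupling term subdominant. The paper formalizes the circular dependence via induction on $t$ (base case $t=T_0+1$ where the estimated noises are zero, inductive step bounding $\norm{\xb_{i,t_0+1}}$ under the hypothesis $\norm{\wb_s-\hat{\wb}_{i,s}}\leq\zeta\epsilon$), whereas you package it as an a priori bound on the finite maximum $W_{\max}$; as you note, these are equivalent, and the remaining work is the constant bookkeeping that the paper carries out explicitly.
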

\begin{proof}
    Suppose the control is chosen as the form: $\ub_t = -\Kb\xb_t + \Tilde{\ub}_t$, then based on the linear dynamics we have
    \begin{equation}\label{Eq1: Diff between estimated and real noise}
        \xb_{t+1} = (\Ab-\Bb\Kb)^{t-t^{\prime}+1}\xb_{t^{\prime}} + \sum_{i=t^{\prime}}^{t} (\Ab-\Bb\Kb)^{t-i}(\wb_i + \Bb\Tilde{\ub}_i).
    \end{equation}
    In the learning phase, the control $\Tilde{\ub}_{i,t}$ of agent $i$ is chosen as
    \begin{equation}\label{Eq2: Diff between estimated and real noise}
        \Tilde{\ub}_{i,t} = \sum_{j=1}^H \Mb_{i,t}^{[j-1]} \hat{\wb}_{i,t-j}.
    \end{equation}
    Now based on Equations \eqref{Eq1: Diff between estimated and real noise} and \eqref{Eq2: Diff between estimated and real noise}, it can be shown that for $t\geq T_0+1$, $\norm{\xb_{i,t}}\leq $ and $\norm{\wb_t - \hat{\wb}_{i,t}}\leq \zeta\epsilon$ by induction.

    {\bf Based case: $\norm{\wb_{T_0+1} - \hat{\wb}_{i,T_0+1}}\leq \zeta \epsilon$}\\
    Based on the linear dynamics, we have
    \begin{equation*}
    \begin{split}
        \norm{\wb_{T_0+1} - \hat{\wb}_{i,T_0+1}} &= \norm{(\widehat{\Ab}_i - \Ab)\xb_{i,T_0+1} + (\widehat{\Bb}_i - \Bb)\ub_{i,T_0+1}}\\
        &= \norm{(\widehat{\Ab}_i - \Ab)\xb_{i,T_0+1} - (\widehat{\Bb}_i - \Bb)\Kb\xb_{i,T_0+1}}\\
        &\leq \epsilon(1+\kappa) \frac{2\kappa^3 W\sqrt{d_2}}{\gamma}\leq \zeta \epsilon,
    \end{split}
    \end{equation*}
    where the second equality is due to the initialization of the estimated noises and the first inequality is based on Lemma \ref{L: Bounds on states and actions during the exploration phase}.\\\\

    Suppose for a $t_0\geq T_0+1$, we have for all $t\in [T_0+1, t_0]$, $\norm{\wb_t - \hat{\wb}_{i,t}}\leq \zeta\epsilon$, then based on Equations \eqref{Eq1: Diff between estimated and real noise} and \eqref{Eq2: Diff between estimated and real noise} we get
    \begin{equation}\label{Eq3: Diff between estimated and real noise}
        \begin{aligned}
            \norm{\xb_{i,t_0+1}} &= \norm{(\Ab-\Bb\Kb)^{t_0 - T_0}\xb_{i,T_0+1} + \sum_{j=T_0+1}^{t_0} (\Ab-\Bb\Kb)^{t_0-j}(\wb_j + \Bb\Tilde{\ub}_{i,j})}\\
            &\leq \kappa^2\norm{\xb_{i,T_0+1}} + \norm{\sum_{j=T_0+1}^{t_0} (\Ab-\Bb\Kb)^{t_0-j}\wb_j} + \norm{\sum_{j=T_0+1}^{t_0}\left[ (\Ab-\Bb\Kb)^{t_0-j}\Bb\sum_{k=1}^H \Mb_{i,j}^{[k-1]} \hat{\wb}_{i,j-k}\right]}\\
            &\leq \frac{2\kappa^5 W\sqrt{d_2}}{\gamma} + \frac{\kappa^2 W}{\gamma} + \sum_{j=T_0+1}^{t_0}\left[ \norm{(\Ab-\Bb\Kb)^{t_0-j}}\norm{\Bb}\sum_{k=1}^H \norm{\Mb_{i,j}^{[k-1]}} \norm{\hat{\wb}_{i,j-k}}\right]\\
            &\leq \frac{2\kappa^5 W\sqrt{d_2}}{\gamma} + \frac{\kappa^2 W}{\gamma} + \sum_{j=T_0+1}^{t_0}\left[ \kappa^3(1-\gamma)^{t_0-j}\sum_{k=1}^H 2\kappa^3(1-\gamma)^{k-1} (W+\zeta\epsilon)\right]\\
            &\leq \frac{2\kappa^5 W\sqrt{d_2}}{\gamma} + \frac{\kappa^2 W}{\gamma} + \frac{2\kappa^6(W+\zeta\epsilon)}{\gamma^2}
        \end{aligned}
    \end{equation}
    and 
    \begin{equation}\label{Eq4: Diff between estimated and real noise}
    \begin{aligned}
        \norm{\wb_{t_0+1} - \hat{\wb}_{i,t_0+1}} &= \norm{(\widehat{\Ab}_i - \Ab)\xb_{i,t_0+1} + (\widehat{\Bb}_i - \Bb)\ub_{i,t_0+1}}\\
        &=\norm{(\widehat{\Ab}_i - \Ab)\xb_{i,t_0+1} + (\widehat{\Bb}_i - \Bb)(-\Kb\xb_{i,t_0+1} + \sum_{j=1}^H \Mb_{i,t}^{[j-1]} \hat{\wb}_{i,t_0+1-j})}\\
        &\leq \epsilon(1+\kappa)\big(\frac{2\kappa^5 W\sqrt{d_2}}{\gamma} + \frac{\kappa^2 W}{\gamma} + \frac{2\kappa^6(W+\zeta\epsilon)}{\gamma^2}\big) + \epsilon2\kappa^3 (W+\zeta\epsilon) \sum_{j=1}^H(1-\gamma)^{j-1}\\
        &\leq \epsilon(1+\kappa)\big(\frac{2\kappa^5 W\sqrt{d_2}}{\gamma} + \frac{\kappa^2 W}{\gamma} + \frac{2\kappa^6(W+\zeta\epsilon)}{\gamma^2}\big) + \frac{\epsilon2\kappa^3 (W+\zeta\epsilon)}{\gamma}\\
        &\leq \frac{12 W\sqrt{d_2}\kappa^7}{\gamma^2}\epsilon + \frac{6\kappa^7}{\gamma^2}\zeta \epsilon^2\leq \frac{24 W\sqrt{d_2}\kappa^7}{\gamma^2}\epsilon = \zeta\epsilon.
    \end{aligned}        
    \end{equation}
    Based on the rule of induction, the result is proved.
\end{proof}

\subsection{Bounds for The Distributed Online Learning with Memory and the Individual Regret (Unknown System)}
For the unknown case, as the local surrogate function is built on each agent's system estimate, the analysis of distributed OL is not directly applicable. To tackle this issue, in Lemma \ref{L: Difference between surrogate states (actions) on the real and estiamted systems} we quantify the deviation of the evolvement of the surrogate state due to the effect of different estimates. With Lemma \ref{L: Difference between surrogate states (actions) on the real and estiamted systems}, we provide the regret bound on the corresponding distributed OL with memory of the unknown case in Theorem \ref{T: Regret bound for OCO with memory (Unknown case)} and show the proof of Theorem \ref{T: Regret bound (unknown case)}.

\begin{lemma}\label{L: Difference between surrogate states (actions) on the real and estiamted systems}
Suppose the conditions of Lemma \ref{L: Diff between estimated and real noise} hold and $\norm{\Ab - \widehat{\Ab}_i},\norm{\Bb - \widehat{\Bb}_i}\leq \epsilon\text{ and }\norm{\widehat{\Ab}_i - \widehat{\Ab}_j},\norm{\widehat{\Bb}_i - \widehat{\Bb}_j}\leq \epsilon'$. For any $\Mb\in \mathcal{M}=\{\Mb: \Mb^{[i-1]}\leq C(1-\gamma)^i\}$, we have $\forall i,l \in [m]$
\begin{equation*}
\begin{split}
    &\norm{\yb^{\Kb}_{t+1}(\Mb|\widehat{\Ab}_i, \widehat{\Bb}_i,\{\hat{\wb}_i\}) - \yb^{\Kb}_{t+1}(\Mb|\Ab, \Bb,\{\wb\})}\\
    \leq &\frac{2\kappa^5\epsilon}{(\gamma - 2\kappa^3\epsilon)^2} + \frac{2\kappa^5H^2C(\kappa+\epsilon)\epsilon}{(1-\gamma+2\kappa^3\epsilon)(\gamma - 2\kappa^3\epsilon)} + \frac{(H+1)\kappa^2C\epsilon}{(1-\gamma)\gamma }\\
    + & \left(\frac{\kappa^2 + (H)\kappa^3C}{\gamma}\right)(2W + \zeta\epsilon) \text{ for }t+1\in[T_0+1,T_0+2H+1],
\end{split}
\end{equation*}
and
\begin{equation*}
\begin{split}
    &\norm{\yb^{\Kb}_{t+1}(\Mb|\widehat{\Ab}_i, \widehat{\Bb}_i,\{\hat{\wb}_i\}) - \yb^{\Kb}_{t+1}(\Mb|\Ab, \Bb,\{\wb\})}\\
    \leq &\frac{2\kappa^5\epsilon}{(\gamma - 2\kappa^3\epsilon)^2} + \frac{2\kappa^5H^2C(\kappa+\epsilon)\epsilon}{(1-\gamma+2\kappa^3\epsilon)(\gamma - 2\kappa^3\epsilon)} + \frac{(H+1)\kappa^2C\epsilon}{(1-\gamma)\gamma }\\
    + & \left(\frac{\kappa^2 + (H)\kappa^3C}{\gamma}\right)(\zeta\epsilon)\text{ for }t+1\geq T_0+2H+2,
\end{split}
\end{equation*}
and 
\begin{equation*}
\begin{split}
    &\norm{\yb^{\Kb}_{t+1}(\Mb|\widehat{\Ab}_i, \widehat{\Bb}_i,\{\hat{\wb}_i\}) - \yb^{\Kb}_{t+1}(\Mb|\widehat{\Ab}_l, \widehat{\Bb}_l,\{\hat{\wb}_l\})}\\
    \leq &\frac{2\kappa^5\epsilon^{\prime}}{(\gamma - 2\kappa^3\epsilon)^2} + \frac{2\kappa^5H^2C(\kappa+\epsilon)\epsilon^{\prime}}{(1-\gamma+2\kappa^3\epsilon)(\gamma - 2\kappa^3\epsilon)} + \frac{(H+1)\kappa^2C\epsilon^{\prime}}{(1-\gamma+2\kappa^3\epsilon)(\gamma - 2\kappa^3\epsilon)}\\
    + & \left(\frac{\kappa^2 + (H)(\kappa+\epsilon)\kappa^2C}{\gamma - 2\kappa^3\epsilon}\right)(2\zeta\epsilon).
\end{split}    
\end{equation*}
\end{lemma}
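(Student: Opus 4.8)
The plan is to expand every surrogate state through its disturbance--state transfer matrices and to separate the three sources of discrepancy: the drift of $(\widehat{\Ab}_i,\widehat{\Bb}_i)$ from $(\Ab,\Bb)$, the drift of $(\widehat{\Ab}_l,\widehat{\Bb}_l)$ from $(\widehat{\Ab}_i,\widehat{\Bb}_i)$, and the mismatch between the true noises $\{\wb\}$ and the estimated noises $\{\hat{\wb}_i\}$. Writing $\widetilde{\Ab}_{\Kb}:=\Ab-\Bb\Kb$ and $\widehat{\widetilde{\Ab}}_{\Kb,i}:=\widehat{\Ab}_i-\widehat{\Bb}_i\Kb$, for the first bound I would begin with the decomposition
\begin{equation*}
\begin{split}
&\yb^{\Kb}_{t+1}(\Mb|\widehat{\Ab}_i,\widehat{\Bb}_i,\{\hat{\wb}_i\})-\yb^{\Kb}_{t+1}(\Mb|\Ab,\Bb,\{\wb\})\\
=&\sum_{k=0}^{2H}\big(\Psi^{\Kb,H}_{t,k}(\Mb|\widehat{\Ab}_i,\widehat{\Bb}_i)-\Psi^{\Kb,H}_{t,k}(\Mb|\Ab,\Bb)\big)\hat{\wb}_{i,t-k}\\
&+\sum_{k=0}^{2H}\Psi^{\Kb,H}_{t,k}(\Mb|\Ab,\Bb)\,(\hat{\wb}_{i,t-k}-\wb_{t-k}),
\end{split}
\end{equation*}
so that the first sum carries the system drift and the second the noise drift.

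For the noise-drift sum I would invoke Lemma~\ref{L: Upper bound of the disturbance-state transfer matrix} (with $\kappa_{B'}=\kappa$) to get $\sum_k\norm{\Psi^{\Kb,H}_{t,k}(\Mb|\Ab,\Bb)}\le(\kappa^2+H\kappa^3C)/\gamma$, and Lemma~\ref{L: Diff between estimated and real noise} to get $\norm{\hat{\wb}_{i,s}-\wb_s}\le\zeta\epsilon$ for $s\ge T_0+1$. The only subtlety is that $\hat{\wb}_{i,s}=0$ for $s\le T_0$ by the initialization, so when $t+1\in[T_0+1,T_0+2H+1]$ a few summands instead obey $\norm{\hat{\wb}_{i,t-k}-\wb_{t-k}}=\norm{\wb_{t-k}}\le W$, which is what forces an $O(W)$ factor (rather than $O(\zeta\epsilon)$) in the last term of the first displayed bound, whereas for $t+1\ge T_0+2H+2$ every index $t-k$ exceeds $T_0$ and only $\zeta\epsilon$ survives --- precisely the case split in the statement. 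For the system-drift sum I would expand $\Psi^{\Kb,H}_{t,k}(\Mb|\cdot,\cdot)=\widehat{\widetilde{\Ab}}_{\Kb,i}^{\,k}\mathbb{1}_{k\le H}+\sum_{j=0}^{H}\widehat{\widetilde{\Ab}}_{\Kb,i}^{\,j}\widehat{\Bb}_i\Mb^{[k-j-1]}\mathbb{1}_{k-j\in[1,H]}$ (and likewise for the true system) and split the $j$-th term as $\widehat{\widetilde{\Ab}}_{\Kb,i}^{\,j}\widehat{\Bb}_i-\widetilde{\Ab}_{\Kb}^{\,j}\Bb=\widehat{\widetilde{\Ab}}_{\Kb,i}^{\,j}(\widehat{\Bb}_i-\Bb)+(\widehat{\widetilde{\Ab}}_{\Kb,i}^{\,j}-\widetilde{\Ab}_{\Kb}^{\,j})\Bb$; summing the resulting pieces over $k$ and $j$ against the geometric weights $\norm{\Mb^{[k-j-1]}}\le C(1-\gamma)^{k-j}$ should reproduce the three terms $\tfrac{2\kappa^5\epsilon}{(\gamma-2\kappa^3\epsilon)^2}$, $\tfrac{2\kappa^5H^2C(\kappa+\epsilon)\epsilon}{(1-\gamma+2\kappa^3\epsilon)(\gamma-2\kappa^3\epsilon)}$ and $\tfrac{(H+1)\kappa^2C\epsilon}{(1-\gamma)\gamma}$.

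The key technical device --- and the step I expect to require the most bookkeeping --- is controlling $\norm{\widehat{\widetilde{\Ab}}_{\Kb,i}^{\,j}-\widetilde{\Ab}_{\Kb}^{\,j}}$ together with its sum over $j$. I would take the $\Hb,\Lb$ furnished by the strong stability of $\Kb$ w.r.t.\ $(\Ab,\Bb)$ and write $\widetilde{\Ab}_{\Kb}^{\,j}=\Hb\Lb^{j}\Hb^{-1}$ and $\widehat{\widetilde{\Ab}}_{\Kb,i}^{\,j}=\Hb(\Lb+\Delta)^{j}\Hb^{-1}$ with $\Delta=\Hb^{-1}[(\widehat{\Ab}_i-\Ab)-(\widehat{\Bb}_i-\Bb)\Kb]\Hb$, so that $\norm{\Delta}\le 2\kappa^3\epsilon$ and $\norm{\Lb},\norm{\Lb+\Delta}\le 1-(\gamma-2\kappa^3\epsilon)$; then Lemma~\ref{L: Series of moment differences} yields $\norm{\widehat{\widetilde{\Ab}}_{\Kb,i}^{\,j}-\widetilde{\Ab}_{\Kb}^{\,j}}\le 2\kappa^5 j(1-\gamma+2\kappa^3\epsilon)^{j-1}\epsilon$ and $\sum_j\norm{\widehat{\widetilde{\Ab}}_{\Kb,i}^{\,j}-\widetilde{\Ab}_{\Kb}^{\,j}}\le 2\kappa^5\epsilon/(\gamma-2\kappa^3\epsilon)^2$. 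A single common conjugator $\Hb$ is essential here, so that $\Lb$ and $\Lb+\Delta$ are simultaneously contractive and Lemma~\ref{L: Series of moment differences} applies; Lemma~\ref{L:strong stability for a close system} additionally gives $\norm{\widehat{\widetilde{\Ab}}_{\Kb,i}^{\,j}}\le\kappa^2(1-\gamma+2\kappa^3\epsilon)^{j}$ and $\norm{\widehat{\Bb}_i}\le\kappa+\epsilon$, which is what turns $\norm{\Bb}$ into $\kappa+\epsilon$ in the second term. The second bound then follows by the same argument with the burn-in summands removed, and the third bound (agent $i$ versus agent $l$) by the identical decomposition with $\epsilon$ replaced by $\epsilon'$ in the perturbation $(\widehat{\Ab}_i-\widehat{\Ab}_l)-(\widehat{\Bb}_i-\widehat{\Bb}_l)\Kb$, with both transfer matrices conjugated into the $(\kappa,\gamma-2\kappa^3\epsilon)$ regime via Lemma~\ref{L:strong stability for a close system}, and with the noise mismatch handled through $\norm{\hat{\wb}_{i,s}-\hat{\wb}_{l,s}}\le\norm{\hat{\wb}_{i,s}-\wb_s}+\norm{\wb_s-\hat{\wb}_{l,s}}\le 2\zeta\epsilon$ (Lemma~\ref{L: Diff between estimated and real noise}), noting that $\hat{\wb}_{i,s}=\hat{\wb}_{l,s}=0$ for $s\le T_0$ so that no burn-in term arises there.
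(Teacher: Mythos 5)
Your proposal is correct and follows essentially the same route as the paper's proof: the identical add-and-subtract decomposition into a system-drift sum and a noise-drift sum, the same conjugation of both closed-loop matrices by the single $\Hb$ from Lemma \ref{L:strong stability for a close system} so that Lemma \ref{L: Series of moment differences} controls $\sum_j\|\widehat{\widetilde{\Ab}}_{\Kb,i}^{\,j}-\widetilde{\Ab}_{\Kb}^{\,j}\|$, and the same burn-in case split driven by $\hat{\wb}_{i,s}=0$ for $s\le T_0$. The only (cosmetic) deviation is that you split $\widehat{\widetilde{\Ab}}^{\,j}\widehat{\Bb}_i-\widetilde{\Ab}^{\,j}\Bb$ in the opposite order from the paper, which would place the $(\kappa+\epsilon)$ versus $\kappa$ factors in the other two terms but changes nothing of substance.
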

\begin{proof}
    Based on the expression of the surrogate state, we have
    \begin{equation}\label{Eq1: Difference between surrogate states (actions) on the real and estiamted systems}
    \resizebox{\linewidth}{!}
    {
    \begin{minipage}{\linewidth}
    $
    \begin{aligned}
        &\norm{\yb^{\Kb}_{t+1}(\Mb|\widehat{\Ab}_i, \widehat{\Bb}_i,\{\hat{\wb}_i\}) - \yb^{\Kb}_{t+1}(\Mb|\Ab, \Bb,\{\wb\})}\\
        = &\norm{\sum_{j=0}^{2H} \left[\Psi^{\Kb,H}_{t,j}(\Mb|\widehat{\Ab}_i, \widehat{\Bb}_i) \hat{\wb}_{i,t-j} - \Psi^{\Kb,H}_{t,j}(\Mb|\Ab,\Bb) \wb_{t-j}\right]}\\
        = &\norm{\sum_{j=0}^{2H} \left[\Psi^{\Kb,H}_{t,j}(\Mb|\widehat{\Ab}_i, \widehat{\Bb}_i) \hat{\wb}_{i,t-j} - \Psi^{\Kb,H}_{t,j}(\Mb|\Ab,\Bb)\hat{\wb}_{i,t-j} + \Psi^{\Kb,H}_{t,j}(\Mb|\Ab,\Bb)\hat{\wb}_{i,t-j} - \Psi^{\Kb,H}_{t,j}(\Mb|\Ab,\Bb) \wb_{t-j}\right]}\\
        \leq &\sum_{j=0}^{2H} \left[\norm{\big(\Psi^{\Kb,H}_{t,j}(\Mb|\widehat{\Ab}_i, \widehat{\Bb}_i) - \Psi^{\Kb,H}_{t,j}(\Mb|\Ab,\Bb)\big)\hat{\wb}_{i,t-j}} + \norm{\Psi^{\Kb,H}_{t,j}(\Mb|\Ab,\Bb)}\norm{\hat{\wb}_{i,t-j} - \wb_{t-j}}\right].
    \end{aligned}
    $    
    \end{minipage}
    }
    \end{equation}
    From the definition of the disturbance-state transfer matrix, we get
    \begin{equation}\label{Eq2: Difference between surrogate states (actions) on the real and estiamted systems}
    \resizebox{\linewidth}{!}
    {
    $
    \begin{aligned}
        &\norm{\Psi^{\Kb,H}_{t,j}(\Mb|\widehat{\Ab}_i, \widehat{\Bb}_i) - \Psi^{\Kb,H}_{t,j}(\Mb|\Ab,\Bb)}\\
        =&\norm{(\widehat{\Ab}_i - \widehat{\Bb}_i\Kb)^j \mathbb{1}_{j\leq H} + \sum_{k=0}^H (\widehat{\Ab}_i - \widehat{\Bb}_i\Kb)^k\widehat{\Bb}_i\Mb^{[j-k-1]}\mathbb{1}_{j-k\in [1,H]} - (\Ab - \Bb\Kb)^j \mathbb{1}_{j\leq H} - \sum_{k=0}^H (\Ab - \Bb\Kb)^k\Bb\Mb^{[j-k-1]}\mathbb{1}_{j-k\in [1,H]}}\\
        \leq &\norm{(\widehat{\Ab}_i - \widehat{\Bb}_i\Kb)^j - (\Ab - \Bb\Kb)^j}
        +\norm{\sum_{k=0}^H \left[(\widehat{\Ab}_i - \widehat{\Bb}_i\Kb)^k\widehat{\Bb}_i\Mb^{[j-k-1]} - (\Ab - \Bb\Kb)^k\widehat{\Bb}_i\Mb^{[j-k-1]}\right]}\\
        + &\norm{\sum_{k=0}^H \left[(\Ab - \Bb\Kb)^k\widehat{\Bb}_i\Mb^{[j-k-1]} - (\Ab - \Bb\Kb)^k\Bb\Mb^{[j-k-1]}\right]}\\
        \leq & \kappa^2j(1-\gamma+2\kappa^3\epsilon)^{j-1}(2\kappa^3\epsilon) + \sum_{k=0}^H \kappa^2k(1-\gamma+2\kappa^3\epsilon)^{k-1}(2\kappa^3\epsilon)\norm{\hat{\Bb}_i}C(1-\gamma)^{j-k-1} + (H+1)\epsilon\kappa^2C(1-\gamma)^{j-1},
    \end{aligned}
    $    
    }
    \end{equation}
    where the last inequality is by applying Lemma \ref{L: Series of moment differences} on the fact that from Lemma \ref{L:strong stability for a close system}, we have $(\Ab - \Bb\Kb) = \Hb\Lb_1\Hb^{-1}$ and $(\widehat{\Ab}_i - \widehat{\Bb}_i\Kb) = \Hb\Lb_2\Hb^{-1}$ such that $\norm{\Lb_1}, \norm{\Lb_2}\leq 1-\gamma+2\kappa^3\epsilon$ and $\norm{\Lb_1 - \Lb_2}\leq 2\kappa^3\epsilon$.
    \\\\
    For the term $\sum_{j=0}^{2H} \left[\norm{\Psi^{\Kb,H}_{t,j}(\Mb|\Ab,\Bb)}\norm{\hat{\wb}_{i,t-j} - \wb_{t-j}}\right]$, we consider two cases:
    \begin{itemize}
        \item $t+1\in[T_0+1,T_0+2H+1]$:
        \begin{equation}\label{Eq3: Difference between surrogate states (actions) on the real and estiamted systems}
        \begin{split}
            &\sum_{j=0}^{2H} \left[\norm{\Psi^{\Kb,H}_{t,j}(\Mb|\Ab,\Bb)}\norm{\hat{\wb}_{i,t-j} - \wb_{t-j}}\right]\\
            \leq &\sum_{j=0}^{2H} \left[\big(\kappa^2(1-\gamma)^j + H \kappa^3C(1-\gamma)^{j-1}\big)(\norm{\hat{\wb}_{i,t-j}} + \norm{\wb_{t-j}})\right]\\
            \leq &\left(\frac{\kappa^2 + (H)\kappa^3C}{\gamma}\right)(2W + \zeta\epsilon).
        \end{split}    
        \end{equation}
        \item $t+1\geq T_0+2H+2$:
        \begin{equation}\label{Eq4: Difference between surrogate states (actions) on the real and estiamted systems}
        \begin{split}
            &\sum_{j=0}^{2H} \left[\norm{\Psi^{\Kb,H}_{t,j}(\Mb|\Ab,\Bb)}\norm{\hat{\wb}_{i,t-j} - \wb_{t-j}}\right]\\
            \leq &\sum_{j=0}^{2H} \left[\big(\kappa^2(1-\gamma)^j + H \kappa^3C(1-\gamma)^{j-1}\big)(\zeta\epsilon)\right]\\
            \leq &\left(\frac{\kappa^2 + (H)\kappa^3C}{\gamma}\right)(\zeta\epsilon).
        \end{split}    
        \end{equation}
    \end{itemize}
    Substituting Equations \eqref{Eq2: Difference between surrogate states (actions) on the real and estiamted systems}, \eqref{Eq3: Difference between surrogate states (actions) on the real and estiamted systems} and \eqref{Eq4: Difference between surrogate states (actions) on the real and estiamted systems} into Equation \eqref{Eq1: Difference between surrogate states (actions) on the real and estiamted systems}, we get
    \begin{equation}\label{Eq5: Difference between surrogate states (actions) on the real and estiamted systems}
    \begin{split}
        &\norm{\yb^{\Kb}_{t+1}(\Mb|\widehat{\Ab}_i, \widehat{\Bb}_i,\{\hat{\wb}_i\}) - \yb^{\Kb}_{t+1}(\Mb|\Ab, \Bb,\{\wb\})}\\
        \leq &\left[\frac{2\kappa^5\epsilon}{(\gamma - 2\kappa^3\epsilon)^2} + \frac{2\kappa^5H^2C(\kappa+\epsilon)\epsilon}{(1-\gamma+2\kappa^3\epsilon)(\gamma - 2\kappa^3\epsilon)} + \frac{(H+1)\kappa^2C\epsilon}{(1-\gamma)\gamma }\right](W +\zeta\epsilon)\\
        + & \left(\frac{\kappa^2 + (H)\kappa^3C}{\gamma}\right)(2W + \zeta\epsilon) \text{ for }t+1\in[T_0+1,T_0+2H+1],
    \end{split}
    \end{equation}
    and
    \begin{equation}\label{Eq6: Difference between surrogate states (actions) on the real and estiamted systems}
    \begin{split}
        &\norm{\yb^{\Kb}_{t+1}(\Mb|\widehat{\Ab}_i, \widehat{\Bb}_i,\{\hat{\wb}_i\}) - \yb^{\Kb}_{t+1}(\Mb|\Ab, \Bb,\{\wb\})}\\
        \leq &\left[\frac{2\kappa^5\epsilon}{(\gamma - 2\kappa^3\epsilon)^2} + \frac{2\kappa^5H^2C(\kappa+\epsilon)\epsilon}{(1-\gamma+2\kappa^3\epsilon)(\gamma - 2\kappa^3\epsilon)} + \frac{(H+1)\kappa^2C\epsilon}{(1-\gamma)\gamma }\right](W+\zeta\epsilon)\\
        + & \left(\frac{\kappa^2 + (H)\kappa^3C}{\gamma}\right)(\zeta\epsilon)\text{ for }t+1\geq T_0+2H+2.
    \end{split}
    \end{equation}

    Knowing that from Lemma \ref{L:strong stability for a close system}, we have $(\widehat{\Ab}_i - \widehat{\Bb}_i\Kb) = \Hb\Lb_1\Hb^{-1}$ and $(\widehat{\Ab}_{l} - \widehat{\Bb}_{l}\Kb) = \Hb\Lb_2\Hb^{-1}$ such that $\norm{\Lb_1}, \norm{\Lb_2}\leq 1-\gamma+2\kappa^3\epsilon$ and $\norm{\Lb_1 - \Lb_2}\leq 2\kappa^3\epsilon^{\prime}$, as Equation \eqref{Eq2: Difference between surrogate states (actions) on the real and estiamted systems}, we have
    \begin{equation}\label{Eq7: Difference between surrogate states (actions) on the real and estiamted systems}
    \begin{split}
        &\norm{\Psi^{\Kb,H}_{t,j}(\Mb|\widehat{\Ab}_i, \widehat{\Bb}_i) - \Psi^{\Kb,H}_{t,j}(\Mb|\widehat{\Ab}_l, \widehat{\Bb}_l)}\\
        \leq & \kappa^2j(1-\gamma+2\kappa^3\epsilon)^{j-1}(2\kappa^3\epsilon^{\prime}) + \sum_{k=0}^H \kappa^2k(1-\gamma+2\kappa^3\epsilon)^{k-1}(2\kappa^3\epsilon^{\prime})\norm{\widehat{\Bb}_i}C(1-\gamma)^{j-k-1}\\
        + &(H+1)\epsilon^{\prime}\kappa^2C(1-\gamma+2\kappa^3\epsilon)^{j-1}.
    \end{split}
    \end{equation}
    Based on the definition of the estimated noise sequence, we have $\norm{\hat{\wb}_{i,t} - \hat{\wb}_{l,t}}\leq 2\zeta\epsilon$ for all $t$. Therefore, for $t\geq T_0+1$ we get
    \begin{equation}\label{Eq8: Difference between surrogate states (actions) on the real and estiamted systems}
    \begin{split}
        &\sum_{j=0}^{2H} \left[\norm{\Psi^{\Kb,H}_{t,j}(\Mb|\widehat{\Ab}_l,\widehat{\Bb}_l)}\norm{\hat{\wb}_{i,t-j} - \hat{\wb}_{l,t-j}}\right]\\
        \leq &\sum_{j=0}^{2H} \left[\big(\kappa^2(1-\gamma+2\kappa^3\epsilon)^j + H (\kappa+\epsilon)\kappa^2C(1-\gamma+2\kappa^3\epsilon)^{j-1}\big)(2\zeta\epsilon)\right]\\
        \leq &\left(\frac{\kappa^2 + (H)(\kappa+\epsilon)\kappa^2C}{\gamma - 2\kappa^3\epsilon}\right)(2\zeta\epsilon).
    \end{split}
    \end{equation}    
    Based on Equations \eqref{Eq7: Difference between surrogate states (actions) on the real and estiamted systems} and \eqref{Eq8: Difference between surrogate states (actions) on the real and estiamted systems}, we can get
    \begin{equation}\label{Eq9: Difference between surrogate states (actions) on the real and estiamted systems}
    \begin{split}
        &\norm{\yb^{\Kb}_{t+1}(\Mb|\widehat{\Ab}_i, \widehat{\Bb}_i,\{\hat{\wb}_i\}) - \yb^{\Kb}_{t+1}(\Mb|\widehat{\Ab}_l, \widehat{\Bb}_l,\{\hat{\wb}_l\})}\\
        \leq &\left[\frac{2\kappa^5\epsilon^{\prime}}{(\gamma - 2\kappa^3\epsilon)^2} + \frac{2\kappa^5H^2C(\kappa+\epsilon)\epsilon^{\prime}}{(1-\gamma+2\kappa^3\epsilon)(\gamma - 2\kappa^3\epsilon)} + \frac{(H+1)\kappa^2C\epsilon^{\prime}}{(1-\gamma+2\kappa^2\epsilon)(\gamma - 2\kappa^2\epsilon)}\right](W+\zeta\epsilon)\\
        + & \left(\frac{\kappa^2 + (H)(\kappa+\epsilon)\kappa^2C}{\gamma - 2\kappa^3\epsilon}\right)(2\zeta\epsilon).
    \end{split}    
    \end{equation}
    
\end{proof}

\begin{lemma}\label{L: Diff between resulting states based on different initial states}
    For a linear system evolving based on $\{\Ab,\Bb,\{\wb\}\}$ and the control is chosen as the form: $\ub_t = -\Kb\xb_t + \Tilde{\ub}_t$, where $\Tilde{\ub}_t$ does not depend on $\xb_t$. For two state sequences $\{\xb\}$ and $\{\xb^{\prime}\}$ starting from two different initial states: $\xb_{t_0}$ and $\xb_{t_0} + \Delta\xb$, we have
    \begin{equation*}
    \begin{split}
        \norm{\xb_{t} - \xb^{\prime}_t} &\leq \norm{(\Ab-\Bb\Kb)^{t-t_0}}\norm{\Delta \xb},\\
        \norm{\ub_{t} - \ub^{\prime}_t} &\leq \kappa\norm{(\Ab-\Bb\Kb)^{t-t_0}}\norm{\Delta \xb}.
    \end{split}
    \end{equation*}
\end{lemma}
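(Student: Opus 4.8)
The plan is to exploit linearity: the difference of the two trajectories, driven by identical disturbances and identical exogenous inputs, evolves according to the \emph{homogeneous} closed-loop dynamics, so it decays (or grows) exactly like a power of $\Ab-\Bb\Kb$.

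First I would substitute the control law $\ub_t = -\Kb\xb_t + \Tilde{\ub}_t$ into the system equation to obtain the closed-loop form
\begin{equation*}
    \xb_{t+1} = (\Ab - \Bb\Kb)\xb_t + \Bb\Tilde{\ub}_t + \wb_t,
\end{equation*}
and likewise $\xb^{\prime}_{t+1} = (\Ab - \Bb\Kb)\xb^{\prime}_t + \Bb\Tilde{\ub}_t + \wb_t$. The key observation is that $\Tilde{\ub}_t$ is assumed not to depend on the state, hence it is the \emph{same} signal for both trajectories, and the same is trivially true of $\wb_t$. Subtracting the two recursions, the $\Bb\Tilde{\ub}_t$ and $\wb_t$ terms cancel, so $\delb_t := \xb_t - \xb^{\prime}_t$ satisfies $\delb_{t+1} = (\Ab - \Bb\Kb)\delb_t$ with initial condition $\delb_{t_0} = \xb_{t_0} - (\xb_{t_0} + \Delta\xb) = -\Delta\xb$.

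Next I would unroll this recursion over the $t-t_0$ steps to get $\delb_t = -(\Ab - \Bb\Kb)^{t-t_0}\Delta\xb$, and take norms, using submultiplicativity of the spectral norm, to conclude $\norm{\xb_t - \xb^{\prime}_t} \le \norm{(\Ab - \Bb\Kb)^{t-t_0}}\norm{\Delta\xb}$. For the control bound, I would subtract the two control laws directly: since the $\Tilde{\ub}_t$ terms cancel, $\ub_t - \ub^{\prime}_t = -\Kb(\xb_t - \xb^{\prime}_t)$, whence $\norm{\ub_t - \ub^{\prime}_t} \le \norm{\Kb}\,\norm{\xb_t - \xb^{\prime}_t} \le \kappa\,\norm{(\Ab - \Bb\Kb)^{t-t_0}}\norm{\Delta\xb}$, the last step using $\norm{\Kb}\le\kappa$, which holds because $\Kb$ is $(\kappa,\gamma)$-strongly stable (Definition \ref{D: Strong Stability}).

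There is essentially no obstacle here; the whole argument is the cancellation of the inhomogeneous terms followed by one application of submultiplicativity. The only point worth stating explicitly is \emph{why} those terms cancel, namely the hypothesis that $\Tilde{\ub}_t$ is state-independent, so that it is literally the same sequence for $\{\xb\}$ and $\{\xb^{\prime}\}$; were this violated, the difference recursion would acquire an extra driving term and the clean geometric bound would no longer hold.
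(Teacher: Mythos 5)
Your proof is correct: the cancellation of the common $\Bb\Tilde{\ub}_t$ and $\wb_t$ terms reduces the trajectory difference to the homogeneous closed-loop recursion, and the control bound follows from $\ub_t-\ub^{\prime}_t=-\Kb(\xb_t-\xb^{\prime}_t)$ with $\norm{\Kb}\leq\kappa$. The paper states this lemma without proof, treating it as immediate, and your argument is exactly the one it implicitly relies on.
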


\begin{lemma}\label{L: Consecutive distance (unknown case)}
    Let Algorithm 2 run with step size $\eta>0$ and define $\Mb_t = \frac{1}{m}\sum_{i=1}^m\Mb_{i,t}$. Under Assumptions \ref{A1} to \ref{A4}, we have that $\forall i \in [m]$    
    \begin{equation*}
        \norm{\Mb_{t} - \Mb_{i,t}}_F\leq \frac{2\eta G_c D d\sqrt{d}\sqrt{H} \big(\frac{\kappa^3(\kappa+\epsilon) (W+\zeta\epsilon)}{\gamma - 2\kappa^3\epsilon} + (W+\zeta\epsilon)\big)\sqrt{m}}{1-\beta}
    \end{equation*}
    and
    \begin{equation*}
        \norm{\Mb_t - \widehat{\Mb}_{i,t+1}}_F\frac{2\eta G_c D d\sqrt{d}\sqrt{H} \big(\frac{\kappa^3(\kappa+\epsilon) (W+\zeta\epsilon)}{\gamma - 2\kappa^3\epsilon} + (W+\zeta\epsilon)\big)\sqrt{m}}{1-\beta},
    \end{equation*}
    where $D = \frac{(W+\zeta\epsilon)(\kappa^2 + 2H(\kappa+\epsilon)\kappa^5)}{(\gamma - 2\kappa^3\epsilon)(1-\kappa^2(1-\gamma+2\kappa^3\epsilon)^{(H+1)})} + \frac{2(W+\zeta\epsilon)\kappa^3}{\gamma}$.
    \end{lemma}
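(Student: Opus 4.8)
The plan is to mirror the proof of Lemma~\ref{L: Consecutive distance} almost verbatim; the only substantive change is that each agent now runs D-OGD on the surrogate cost $f_{i,t}(\cdot\,|\,\widehat{\Ab}_i,\widehat{\Bb}_i,\{\hat{\wb}_i\})$ built from its \emph{own} system and noise estimates, so the uniform gradient bound feeding the distributed-averaging argument must be re-derived with the appropriate perturbed constants. First I would record three boundedness facts. Since the hypotheses of Lemma~\ref{L: Diff between estimated and real noise} are in force (via the stated choice of $T_{collect},T_{exchange}$), we have $\norm{\Ab-\widehat{\Ab}_i},\norm{\Bb-\widehat{\Bb}_i}\le\epsilon<\gamma/(2\kappa^3)$, so Lemma~\ref{L:strong stability for a close system} gives that $\Kb$ is $(\kappa,\gamma-2\kappa^3\epsilon)$-strongly stable w.r.t.\ $(\widehat{\Ab}_i,\widehat{\Bb}_i)$; Theorem~\ref{T: System identification} gives $\norm{\widehat{\Bb}_i}\le\kappa+\epsilon$; and Lemma~\ref{L: Diff between estimated and real noise} together with Assumption~\ref{A1} gives $\norm{\hat{\wb}_{i,t}}\le W+\zeta\epsilon$ for every $t$ (with $\hat{\wb}_{i,t}=0$ for $t\le T_0$).

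Next I would invoke Lemma~\ref{L: Gradient bound for reparameterized functions} with the instantiation $\kappa'=\kappa$, $\gamma'=\gamma-2\kappa^3\epsilon$, $\kappa_{B'}=\kappa+\epsilon$, $W'=W+\zeta\epsilon$, and $C=2\kappa^3$, noting that the algorithm's constraint set $\mathcal{M}$ is contained in the set $\{\Mb:\norm{\Mb^{[i-1]}}\le C(1-\gamma)^i,\ \gamma\ge\gamma'\}$ required there (this is precisely where $\gamma-2\kappa^3\epsilon>0$ is used). This yields, for all $i,t$,
\[
\norm{\nabla f_{i,t}(\Mb_{i,t}\,|\,\widehat{\Ab}_i,\widehat{\Bb}_i,\{\hat{\wb}_i\})}_F\le G:=G_c D\sqrt{d}\,d\sqrt{H}\Big(\frac{\kappa^3(\kappa+\epsilon)(W+\zeta\epsilon)}{\gamma-2\kappa^3\epsilon}+(W+\zeta\epsilon)\Big),
\]
with $D$ exactly as in the statement; by nonexpansiveness of $\Pi_{\mathcal{M}}$ the projection residuals then satisfy $\norm{\widehat{\Mb}_{i,t}-\Mb_{i,t}}\le\eta G$ as well.

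Finally I would carry out the distributed-averaging estimate exactly as in Lemma~\ref{L: Consecutive distance}: vectorize the iterates, stack them into $\mathbb{M}_t=[\Mb_{1,t},\ldots,\Mb_{m,t}]$ with gradient matrix $\Gb_t$ and residual matrix $\Rb_t$, write $\mathbb{M}_t=\mathbb{M}_{t-1}\Pb-\eta\Gb_{t-1}-\Rb_t$, unroll the recursion back to the initialization time $T_0+1$, and apply $\Pb^k\mathbf{1}=\mathbf{1}$ with the geometric mixing bound $\sum_j|[\Pb^k]_{ji}-1/m|\le\sqrt{m}\beta^k$. The boundary term vanishes because $\Mb_{i,T_0+1}$ is identical across agents, and the two geometric sums $\sum_{l\ge0}\sqrt{m}\beta^l\le\sqrt{m}/(1-\beta)$ (one for the gradient contributions, one for the residual contributions) produce the factor $2$ in both displayed bounds, with $G$ replacing the known-case gradient constant. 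I expect the only real work to be bookkeeping---checking that the perturbed constants thread correctly through Lemma~\ref{L: Gradient bound for reparameterized functions} and that the preconditions of Lemmas~\ref{L:strong stability for a close system} and \ref{L: Diff between estimated and real noise} (smallness of $\epsilon$, hence a long enough exploration phase) hold; the averaging argument itself is mechanically unchanged from the known-system case.
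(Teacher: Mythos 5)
Your proposal matches the paper's own proof essentially verbatim: the paper likewise invokes Lemma~\ref{L:strong stability for a close system} to get $(\kappa,\gamma-2\kappa^3\epsilon)$-strong stability of $\Kb$ w.r.t.\ $(\widehat{\Ab}_i,\widehat{\Bb}_i)$, uses Lemma~\ref{L: Diff between estimated and real noise} for $\norm{\hat{\wb}_{i,t}}\le W+\zeta\epsilon$, plugs these perturbed constants into Lemma~\ref{L: Gradient bound for reparameterized functions} to obtain the new uniform gradient bound, and then reuses the averaging argument of Lemma~\ref{L: Consecutive distance} unchanged. Your instantiation $\kappa'=\kappa$, $\gamma'=\gamma-2\kappa^3\epsilon$, $\kappa_{B'}=\kappa+\epsilon$, $W'=W+\zeta\epsilon$, $C=2\kappa^3$ reproduces exactly the constants in the stated bound, so the proof is correct and takes the same route as the paper.
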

    \begin{proof}
        The proof follows the same process as Lemma \ref{L: Consecutive distance}. The main difference is that since ideal cost of agent $i$ is now defined as $f_{i,t}(\Mb_{i,t}|\widehat{\Ab}_i, \widehat{\Bb}_i, \{\hat{\wb}_i\})$. Knowing that $\forall i,\; \norm{\widehat{\Ab}_i - \Ab}, \norm{\widehat{\Bb}_i - \Bb}\leq \epsilon$, based on Lemma \ref{L:strong stability for a close system}, $\Kb$ is shown to be $(\kappa, \gamma-2\kappa^3\epsilon)$-strongly stable w.r.t. $(\widehat{\Ab}_i, \widehat{\Bb}_i)$. Also from Lemma \ref{L: Diff between estimated and real noise}, we have $\forall i,t,\;\norm{\hat{\wb}_{i,t}}\leq W + \zeta\epsilon$, where $\zeta = \frac{24 W\sqrt{d_2}\kappa^7}{\gamma^2}$. Based on these facts and Lemma \ref{L: Gradient bound for reparameterized functions}, we have
        \begin{equation}\label{Eq1: Consecutive distance (unknown case)}
            \norm{\nabla_{\Mb}f_{i,t}(\Mb_{i,t}|\widehat{\Ab}_i, \widehat{\Bb}_i, \{\hat{\wb}_i\})}_F\leq G_c D d\sqrt{d}\sqrt{H} \big(\frac{\kappa^3(\kappa+\epsilon) (W+\zeta\epsilon)}{\gamma - 2\kappa^3\epsilon} + (W+\zeta\epsilon)\big),
        \end{equation}
        where $D = \frac{(W+\zeta\epsilon)(\kappa^2 + 2H(\kappa+\epsilon)\kappa^5)}{(\gamma - 2\kappa^3\epsilon)(1-\kappa^2(1-\gamma+2\kappa^3\epsilon)^{(H+1)})} + \frac{2(W+\zeta\epsilon)\kappa^3}{\gamma}$.
        Based on Equation \eqref{Eq1: Consecutive distance (unknown case)} and the same proof procedure as Lemma \ref{L: Consecutive distance}, the result is proved.
    \end{proof}

\begin{theorem}\label{T: Regret bound for OCO with memory (Unknown case)}(Regret bound for OCO with memory)
    Let Algorithm 2 run with step size $\eta>0$. Under Assumptions \ref{A1} to \ref{A4}, we have that $\forall j \in [m]$   
    \begin{equation*}
    \resizebox{\linewidth}{!}
    {$
    \begin{aligned}
        &\sum_{t=T_0+1}^T \sum_i f_{i,t}(\Mb_{j,t-H-1:t}|\widehat{\Ab}_j, \widehat{\Bb}_j, \{\hat{\wb}_j\}) - f_{i,t}(\Mb^*|\widehat{\Ab}_i, \widehat{\Bb}_i, \{\hat{\wb}_i\})\\
        \leq &O\bigg(\eta Tm H^3(G_c D)^2(\kappa+\epsilon)\kappa^3 (W+\zeta\epsilon)\frac{ \big(\frac{\kappa^3(\kappa+\epsilon) (W+\zeta\epsilon)}{\gamma - 2\kappa^3\epsilon} + (W+\zeta\epsilon)\big)\sqrt{m}}{1-\beta}\bigg)\\
        + &O\bigg(mTG_cD \sqrt{2}\left[\kappa\left(\frac{2\kappa^5\epsilon^{\prime}(W+\zeta\epsilon)}{(\gamma - 2\kappa^3\epsilon)^2} + \frac{4\kappa^8H^2(\kappa+\epsilon)\epsilon^{\prime}(W+\zeta\epsilon)}{(1-\gamma+2\kappa^3\epsilon)(\gamma - 2\kappa^3\epsilon)} + \frac{(H+1)2\kappa^5\epsilon^{\prime}(W+\zeta\epsilon)}{(1-\gamma+2\kappa^3\epsilon)(\gamma - 2\kappa^3\epsilon)} + \left(\frac{\kappa^2 + (H)(\kappa+\epsilon)2\kappa^5}{\gamma - 2\kappa^3\epsilon}\right)(2\zeta\epsilon)\right) + \frac{4\kappa^3\zeta\epsilon}{\gamma}\right]\bigg)\\
        + &O\big(\eta T m\sqrt{m}G_cDd^3H\frac{\big(\frac{\kappa^3(\kappa+\epsilon) (W+\zeta\epsilon)}{\gamma - 2\kappa^3\epsilon} + (W+\zeta\epsilon)\big)^2}{1-\beta}\big) + O\bigg(\eta Tm \big(G_c D d\sqrt{d}\sqrt{H} \big(\frac{\kappa^3(\kappa+\epsilon) (W+\zeta\epsilon)}{\gamma - 2\kappa^3\epsilon} + (W+\zeta\epsilon)\big)\big)^2\bigg) + O(\frac{1}{\eta})\\
        + &O\bigg(\frac{1}{\eta}Tm \big(\frac{\eta G_c D d\sqrt{d}\sqrt{H} \big(\frac{\kappa^3(\kappa+\epsilon) (W+\zeta\epsilon)}{\gamma - 2\kappa^3\epsilon} + (W+\zeta\epsilon)\big)\sqrt{m}}{1-\beta}\big)^2\bigg).
    \end{aligned}
    $}
    \end{equation*}
    where $D = \frac{(W+\zeta\epsilon)(\kappa^2 + 2H(\kappa+\epsilon)\kappa^5)}{(\gamma - 2\kappa^3\epsilon)(1-\kappa^2(1-\gamma+2\kappa^3\epsilon)^{(H+1)})} + \frac{2(W+\zeta\epsilon)\kappa^3}{\gamma}$.
    \end{theorem}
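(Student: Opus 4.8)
The plan is to reproduce the argument behind Theorem~\ref{T: Regret bound for OCO with memory}, but to interpose one extra comparison that accounts for the fact that the local surrogate cost $f_{i,t}(\cdot\,|\widehat{\Ab}_i,\widehat{\Bb}_i,\{\hat{\wb}_i\})$ of agent $i$ is defined with respect to \emph{agent-specific} estimates, so that $\sum_i f_{i,t}(\cdot\,|\widehat{\Ab}_i,\widehat{\Bb}_i,\{\hat{\wb}_i\})$ is no longer the ``true'' centralized surrogate cost. For a fixed $i$ and $t\in[T_0+1,T]$ I would split
\begin{align*}
&f_{i,t}(\Mb_{j,t-H-1:t}|\widehat{\Ab}_j,\ldots) - f_{i,t}(\Mb^*|\widehat{\Ab}_i,\ldots)\\
={}& \underbrace{\big[f_{i,t}(\Mb_{j,t-H-1:t}|\widehat{\Ab}_j,\ldots) - f_{i,t}(\Mb_{j,t}|\widehat{\Ab}_j,\ldots)\big]}_{\text{(a) memory}}\\
&+ \underbrace{\big[f_{i,t}(\Mb_{j,t}|\widehat{\Ab}_j,\ldots) - f_{i,t}(\Mb_{j,t}|\widehat{\Ab}_i,\ldots)\big]}_{\text{(b) estimate mismatch}}\\
&+ \underbrace{\big[f_{i,t}(\Mb_{j,t}|\widehat{\Ab}_i,\ldots) - f_{i,t}(\Mb_{i,t}|\widehat{\Ab}_i,\ldots)\big]}_{\text{(c) consensus}}\\
&+ \underbrace{\big[f_{i,t}(\Mb_{i,t}|\widehat{\Ab}_i,\ldots) - f_{i,t}(\Mb^*|\widehat{\Ab}_i,\ldots)\big]}_{\text{(d) online gradient}},
\end{align*}
where in each term ``$\ldots$'' abbreviates the same conditioning as in the preceding $f$. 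As a preprocessing step I invoke Lemma~\ref{L:strong stability for a close system} (so that $\Kb$ is $(\kappa,\gamma-2\kappa^3\epsilon)$-strongly stable w.r.t.\ every $(\widehat{\Ab}_i,\widehat{\Bb}_i)$) and Lemma~\ref{L: Diff between estimated and real noise} (so that $\norm{\hat{\wb}_{i,t}}\le W+\zeta\epsilon$); these let me apply Lemmas~\ref{L: Lipschitz constant in terms of the framework of OCO with memory}, \ref{L: Gradient bound for reparameterized functions} and \ref{L: Consecutive distance (unknown case)} with $\kappa'=\kappa$, $\gamma'=\gamma-2\kappa^3\epsilon$, $W'=W+\zeta\epsilon$, producing the perturbed constant $D$ of the statement. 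Throughout I use the preconditions $\epsilon\le\tfrac{1}{12}\kappa^{-7}\gamma^2$ and $\sqrt q\,\epsilon\le\tfrac{1}{2\sqrt\kappa}$, which hold by the chosen $T_{collect},T_{exchange}$.

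Terms (a), (c) and (d) are handled exactly as in the known-system proof. For (a) I telescope $\Mb_{j,t-k}-\Mb_{j,t}$ into one-step increments, apply the Lipschitz bound of Lemma~\ref{L: Lipschitz constant in terms of the framework of OCO with memory} with the perturbed constants, and bound each $\norm{\Mb_{j,t-s-1}-\Mb_{j,t-s}}$ by inserting $\pm\Mb_{t-s-1}$ and calling Lemma~\ref{L: Consecutive distance (unknown case)}; summing over $i,t$ gives the $O\!\big(\eta TmH^3(\cdots)\big)$ term. For (c) I bound $f_{i,t}(\Mb_{j,t}|\widehat{\Ab}_i,\ldots)-f_{i,t}(\Mb_{i,t}|\widehat{\Ab}_i,\ldots)$ by the gradient bound of Lemma~\ref{L: Gradient bound for reparameterized functions} times $\norm{\Mb_{j,t}-\Mb_{i,t}}$, bounded by Lemma~\ref{L: Consecutive distance (unknown case)}. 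For (d) I use the standard projected-descent identity (as for Eq.~\eqref{Eq1: Regret bound for OCO with memory}): expand $\nabla f_{i,t}(\Mb_{i,t})^\top(\Mb_{i,t}-\Mb^*)$ into $\tfrac{\eta}{2}\norm{\nabla f_{i,t}(\Mb_{i,t})}^2+\tfrac{1}{2\eta}\big(\norm{\Mb_{i,t}-\Mb^*}^2-\norm{\Mb_{i,t+1}-\Mb^*}^2\big)$ plus cross-terms proportional to $(\Mb_{i,t}-\sum_j[\Pb]_{ji}\Mb_{j,t})^\top(\widehat{\Mb}_{i,t+1}-\widehat{\Mb}_{t+1})$ and to $(\Mb_{i,t}-\sum_j[\Pb]_{ji}\Mb_{j,t})^\top(\widehat{\Mb}_{t+1}-\Mb^*)$; summing over $i$ annihilates the last cross-term by double stochasticity of $\Pb$, and summing over $t$ (using Lemmas~\ref{L: Gradient bound for reparameterized functions} and \ref{L: Consecutive distance (unknown case)}) yields the $O(1/\eta)$ term, the term quadratic in the gradient bound, and the term quadratic in the consensus distance.

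The new ingredient is term (b). Since $f_{i,t}$ is $c_{i,t}$ evaluated at the surrogate state--action pair and $c_{i,t}$ is $G_cD$-Lipschitz inside the $D$-ball (Assumption~\ref{A2} together with Lemma~\ref{L: Upper bounds of (surrogate)states and (surrogate)actions and the difference}), I bound (b) by $G_cD\sqrt2\,\big(\norm{\yb^{\Kb}_{t+1}(\Mb_{j,t}|\widehat{\Ab}_j,\ldots)-\yb^{\Kb}_{t+1}(\Mb_{j,t}|\widehat{\Ab}_i,\ldots)}+\norm{\vb^{\Kb}_{t+1}(\cdots)-\vb^{\Kb}_{t+1}(\cdots)}\big)$. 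The surrogate-state difference is exactly the third estimate of Lemma~\ref{L: Difference between surrogate states (actions) on the real and estiamted systems} with cross-agent gap $\epsilon'$; the surrogate-action difference is at most $\kappa$ times that plus the explicit contribution $\sum_{k=1}^H\norm{\Mb_{j,t}^{[k-1]}}\,\norm{\hat{\wb}_{i,t-k}-\hat{\wb}_{j,t-k}}\le \tfrac{2\kappa^3\zeta\epsilon}{\gamma}$ coming from the noise-dependent part of $\vb^{\Kb}$, which together reproduce the bracketed factor $\big[\kappa(\cdots)+\tfrac{4\kappa^3\zeta\epsilon}{\gamma}\big]$ of the statement. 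The $O(H)$ ``boundary'' rounds $t\in[T_0+1,\,T_0+2H+1]$, for which the larger first case of Lemmas~\ref{L: Diff between estimated and real noise}/\ref{L: Difference between surrogate states (actions) on the real and estiamted systems} is in force, add only an absolute $O(H)$ that is absorbed, and the remaining rounds use the tighter second case. Summing (b) over $i$ (a factor $m$) and over $t$ (a factor $T$) gives the $O\!\big(mTG_cD\sqrt2[\cdots]\big)$ term.

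The main obstacle is precisely this estimate-mismatch term (b): controlling the \emph{action} difference and not merely the state difference, carrying out the boundary-round bookkeeping, and tracking the perturbed constants consistently through Lemmas~\ref{L:strong stability for a close system}, \ref{L: Diff between estimated and real noise} and \ref{L: Difference between surrogate states (actions) on the real and estiamted systems}. Once (a)--(d) are in hand, I sum over $t\in[T_0+1,T]$ and $i\in[m]$ and collect the contributions, which yields the claimed bound.
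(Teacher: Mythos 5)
Your proposal is correct and follows essentially the same route as the paper: the four-way split into memory, estimate-mismatch, consensus, and projected-OGD terms is exactly the decomposition used in Equations (56)--(61) of the supplementary proof, with the mismatch term (b) bounded via the cross-agent estimate of Lemma \ref{L: Difference between surrogate states (actions) on the real and estiamted systems} and the noise-gap bound $\norm{\hat{\wb}_{i,t}-\hat{\wb}_{j,t}}\le 2\zeta\epsilon$, and the remaining terms handled with the perturbed constants from Lemmas \ref{L:strong stability for a close system}, \ref{L: Diff between estimated and real noise}, and \ref{L: Consecutive distance (unknown case)}. The only discrepancy is an immaterial factor of two in your intermediate bound on the noise-dependent part of the surrogate-action difference, which does not affect the stated $O(\cdot)$ result.
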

    \begin{proof}
    Like Equation \eqref{Eq1: Regret bound for OCO with memory}, based on the update rule,  $\forall i\in [m]$ and $t\in[T_0+1,\ldots, T]$ we have
    \begin{equation}\label{Eq1: Regret bound for OCO with memory (Unknown case)}
    \begin{split}
        &f_{i,t}(\Mb_{i,t}|\widehat{\Ab}_i, \widehat{\Bb}_i, \{\hat{\wb}_i\}) - f_{i,t}(\Mb^*|\widehat{\Ab}_i, \widehat{\Bb}_i, \{\hat{\wb}_i\})\\
    \leq &\frac{1}{2}\eta\norm{\nabla f_{i,t}(\Mb_{i,t}|\widehat{\Ab}_i, \widehat{\Bb}_i, \{\hat{\wb}_i\})}^2 + \frac{1}{2\eta}\norm{\Mb_{i,t}-\Mb^*}^2 - \frac{1}{2\eta}\norm{\Mb_{i,t+1} - \Mb^*}^2\\
    - &\frac{1}{\eta}\big(\Mb_{i,t} - \sum_j (\Pb_{ji}\Mb_{j.t})\big)^{\top}(\widehat{\Mb}_{i,t+1} - \widehat{\Mb}_{t+1}) - \frac{1}{\eta}\big(\Mb_{i,t} - \sum_j (\Pb_{ji}\Mb_{j.t})\big)^{\top}(\widehat{\Mb}_{t+1} - \Mb^*).
    \end{split}    
    \end{equation}    
    As Equation \eqref{Eq2: Regret bound for OCO with memory}, we get
    \begin{equation}\label{Eq2: Regret bound for OCO with memory (Unknown case)}
    \resizebox{\linewidth}{!}
    {
    \begin{minipage}{\linewidth}
    $
    \begin{aligned}
        &f_{i,t}(\Mb_{j,t}|\widehat{\Ab}_i, \widehat{\Bb}_i, \{\hat{\wb}_i\}) - f_{i,t}(\Mb^*|\widehat{\Ab}_i, \widehat{\Bb}_i, \{\hat{\wb}_i\})\\
        =& f_{i,t}(\Mb_{j,t}|\widehat{\Ab}_i, \widehat{\Bb}_i, \{\hat{\wb}_i\}) - f_{i,t}(\Mb_{i,t}|\widehat{\Ab}_i, \widehat{\Bb}_i, \{\hat{\wb}_i\}) + f_{i,t}(\Mb_{i,t}|\widehat{\Ab}_i, \widehat{\Bb}_i, \{\hat{\wb}_i\}) - f_{i,t}(\Mb^*|\widehat{\Ab}_i, \widehat{\Bb}_i, \{\hat{\wb}_i\})\\
        \leq & G_c D d\sqrt{d}\sqrt{H} \big(\frac{\kappa^3(\kappa+\epsilon) (W+\zeta\epsilon)}{\gamma - 2\kappa^3\epsilon} + (W+\zeta\epsilon)\big)\frac{2\eta G_c D d\sqrt{d}\sqrt{H} \big(\frac{\kappa^3(\kappa+\epsilon) (W+\zeta\epsilon)}{\gamma - 2\kappa^3\epsilon} + (W+\zeta\epsilon)\big)\sqrt{m}}{1-\beta}\\
        + &\frac{1}{2}\eta\norm{\nabla f_{i,t}(\Mb_{i,t}|\widehat{\Ab}_i, \widehat{\Bb}_i, \{\hat{\wb}_i\})}^2 + \frac{1}{2\eta}\norm{\Mb_{i,t}-\Mb^*}^2 - \frac{1}{2\eta}\norm{\Mb_{i,t+1} - \Mb^*}^2\\
    - &\frac{1}{\eta}\big(\Mb_{i,t} - \sum_j (\Pb_{ji}\Mb_{j.t})\big)^{\top}(\widehat{\Mb}_{i,t+1} - \widehat{\Mb}_{t+1}) - \frac{1}{\eta}\big(\Mb_{i,t} - \sum_j (\Pb_{ji}\Mb_{j.t})\big)^{\top}(\widehat{\Mb}_{t+1} - \Mb^*).
    \end{aligned}
    $    
    \end{minipage}
    }    
    \end{equation}
    Summing Equation \eqref{Eq2: Regret bound for OCO with memory (Unknown case)} over $i$ and $t$, we have 
    \begin{equation}\label{Eq3: Regret bound for OCO with memory (Unknown case)}
    \resizebox{\linewidth}{!}
    {
    \begin{minipage}{\linewidth}
    $
    \begin{aligned}
        &\sum_{t=T_0+1}^T \sum_i f_{i,t}(\Mb_{j,t}|\widehat{\Ab}_i, \widehat{\Bb}_i, \{\hat{\wb}_i\}) - f_{i,t}(\Mb^*|\widehat{\Ab}_i, \widehat{\Bb}_i, \{\hat{\wb}_i\})\\
        \leq & 2\eta T m\sqrt{m}G_cDd^3H\frac{\big(\frac{\kappa^3(\kappa+\epsilon) (W+\zeta\epsilon)}{\gamma - 2\kappa^3\epsilon} + (W+\zeta\epsilon)\big)^2}{1-\beta}        
        + \frac{1}{2}\eta\sum_{t=T_0+1}^T\sum_i\norm{\nabla f_{i,t}(\Mb_{i,t}|\widehat{\Ab}_i, \widehat{\Bb}_i, \{\hat{\wb}_i\})}^2\\      
        + &\frac{1}{\eta}\sum_i\left(\Mb_{i,T+1}^{\top} - \Mb_{i,T_0+1}^{\top}\right)\Mb^*
        + \frac{1}{\eta}\sum_{t=T_0+1}^T\sum_i \norm{\big(\Mb_{i,t} - \sum_j (\Pb_{ji}\Mb_{j.t})\big)}\norm{(\widehat{\Mb}_{i,t+1} - \widehat{\Mb}_{t+1})}\\
        + &\frac{1}{2\eta}\sum_i\norm{\Mb_{i,T_0+1}}^2.
    \end{aligned}
    $    
    \end{minipage}
    }
    \end{equation}

    Based on Lemma \ref{L: Difference between surrogate states (actions) on the real and estiamted systems}, we can get
    \begin{equation}\label{Eq4: Regret bound for OCO with memory (Unknown case)}
    \resizebox{\linewidth}{!}
    {$
    \begin{aligned}
        &f_{i,t}(\Mb_{j,t}|\widehat{\Ab}_j, \widehat{\Bb}_j, \{\hat{\wb}_j\}) - f_{i,t}(\Mb_{j,t}|\widehat{\Ab}_i, \widehat{\Bb}_i, \{\hat{\wb}_i\})\\
        =& c_{i,t}\big(\yb^{\Kb}_{t}(\Mb_{j,t}|\widehat{\Ab}_j, \widehat{\Bb}_j,\{\hat{\wb}_j\}), \vb^{\Kb}_{t}(\Mb_{j,t}|\widehat{\Ab}_j, \widehat{\Bb}_j,\{\hat{\wb}_j\})\big) - c_{i,t}\big(\yb^{\Kb}_{t}(\Mb_{j,t}|\widehat{\Ab}_i, \widehat{\Bb}_i,\{\hat{\wb}_i\}), \vb^{\Kb}_{t}(\Mb_{j,t}|\widehat{\Ab}_i, \widehat{\Bb}_i,\{\hat{\wb}_i\})\big)\\
        \leq & G_c D \sqrt{\norm{\yb^{\Kb}_{t}(\Mb_{j,t}|\widehat{\Ab}_j, \widehat{\Bb}_j,\{\hat{\wb}_j\}) - \yb^{\Kb}_{t}(\Mb_{j,t}|\widehat{\Ab}_i, \widehat{\Bb}_i,\{\hat{\wb}_i\})}^2 + \norm{\vb^{\Kb}_{t}(\Mb_{j,t}|\widehat{\Ab}_j, \widehat{\Bb}_j,\{\hat{\wb}_j\}) - \vb^{\Kb}_{t}(\Mb_{j,t}|\widehat{\Ab}_i, \widehat{\Bb}_i,\{\hat{\wb}_i\})}^2}\\
        \leq & G_cD \sqrt{2}\left[\kappa\left(\frac{2\kappa^5\epsilon^{\prime}(W+\zeta\epsilon)}{(\gamma - 2\kappa^3\epsilon)^2} + \frac{4\kappa^8H^2(\kappa+\epsilon)\epsilon^{\prime}(W+\zeta\epsilon)}{(1-\gamma+2\kappa^3\epsilon)(\gamma - 2\kappa^3\epsilon)} + \frac{(H+1)2\kappa^5\epsilon^{\prime}(W+\zeta\epsilon)}{(1-\gamma+2\kappa^3\epsilon)(\gamma - 2\kappa^3\epsilon)} + \left(\frac{\kappa^2 + (H)(\kappa+\epsilon)2\kappa^5}{\gamma - 2\kappa^3\epsilon}\right)(2\zeta\epsilon)\right) + \frac{4\kappa^3\zeta\epsilon}{\gamma}\right],
    \end{aligned}
    $}    
    \end{equation}
    where the last inequality is due to the fact that $\norm{\hat{\wb}_{j,t} - \hat{\wb}_{i,t}}\leq 2\zeta\epsilon$ and 
    \begin{equation*}
    \begin{split}
        &\vb^{\Kb}_{t}(\Mb_{j,t}|\widehat{\Ab}_j, \widehat{\Bb}_j,\{\hat{\wb}_j\}) - \vb^{\Kb}_{t}(\Mb_{j,t}|\widehat{\Ab}_i, \widehat{\Bb}_i,\{\hat{\wb}_i\})\\
        = &-\Kb\big(\yb^{\Kb}_{t}(\Mb_{j,t}|\widehat{\Ab}_j, \widehat{\Bb}_j,\{\hat{\wb}_j\}) - \yb^{\Kb}_{t}(\Mb_{j,t}|\widehat{\Ab}_i, \widehat{\Bb}_i,\{\hat{\wb}_i\})\big) + \sum_{k=1}^H \Mb_{j,t}^{[k-1]}(\hat{\wb}_{j,t-k} - \hat{\wb}_{i,t-k}).
    \end{split}    
    \end{equation*}
    As Equation \eqref{Eq5: Regret bound for OCO with memory}, we get
    \begin{equation}\label{Eq5: Regret bound for OCO with memory (Unknown case)}
    \begin{split}
        &f_{i,t}(\Mb_{j,t-H-1:t}|\widehat{\Ab}_j, \widehat{\Bb}_j, \{\hat{\wb}_j\}) - f_{i,t}(\Mb_{j,t}|\widehat{\Ab}_j, \widehat{\Bb}_j, \{\hat{\wb}_j\})\\
        \leq &O\bigg(\eta H^3(G_c D)^2(\kappa+\epsilon)\kappa^3 (W+\zeta\epsilon)\frac{ \big(\frac{\kappa^3(\kappa+\epsilon) (W+\zeta\epsilon)}{\gamma - 2\kappa^3\epsilon} + (W+\zeta\epsilon)\big)\sqrt{m}}{1-\beta}\bigg).
    \end{split}    
    \end{equation}
    Summing Equations \eqref{Eq4: Regret bound for OCO with memory (Unknown case)} \& \eqref{Eq5: Regret bound for OCO with memory (Unknown case)} over $i$ and $t$, then by combining them with Equation \eqref{Eq3: Regret bound for OCO with memory (Unknown case)} we get
    \begin{equation}\label{Eq6: Regret bound for OCO with memory (Unknown case)}
    \resizebox{\linewidth}{!}
    {$
    \begin{aligned}
        &\sum_{t=T_0+1}^T \sum_i f_{i,t}(\Mb_{j,t-H-1:t}|\widehat{\Ab}_j, \widehat{\Bb}_j, \{\hat{\wb}_j\}) - f_{i,t}(\Mb^*|\widehat{\Ab}_i, \widehat{\Bb}_i, \{\hat{\wb}_i\})\\
        \leq &O\bigg(\eta Tm H^3(G_c D)^2(\kappa+\epsilon)\kappa^3 (W+\zeta\epsilon)\frac{ \big(\frac{\kappa^3(\kappa+\epsilon) (W+\zeta\epsilon)}{\gamma - 2\kappa^3\epsilon} + (W+\zeta\epsilon)\big)\sqrt{m}}{1-\beta}\bigg)\\
        + &O\bigg(mTG_cD \sqrt{2}\left[\kappa\left(\frac{2\kappa^5\epsilon^{\prime}(W+\zeta\epsilon)}{(\gamma - 2\kappa^3\epsilon)^2} + \frac{4\kappa^8H^2(\kappa+\epsilon)\epsilon^{\prime}(W+\zeta\epsilon)}{(1-\gamma+2\kappa^3\epsilon)(\gamma - 2\kappa^3\epsilon)} + \frac{(H+1)2\kappa^5\epsilon^{\prime}(W+\zeta\epsilon)}{(1-\gamma+2\kappa^3\epsilon)(\gamma - 2\kappa^3\epsilon)} + \left(\frac{\kappa^2 + (H)(\kappa+\epsilon)2\kappa^5}{\gamma - 2\kappa^3\epsilon}\right)(2\zeta\epsilon)\right) + \frac{4\kappa^3\zeta\epsilon}{\gamma}\right]\bigg)\\
        + &O\big(\eta T m\sqrt{m}G_cDd^3H\frac{\big(\frac{\kappa^3(\kappa+\epsilon) (W+\zeta\epsilon)}{\gamma - 2\kappa^3\epsilon} + (W+\zeta\epsilon)\big)^2}{1-\beta}\big) + O\bigg(\eta Tm \big(G_c D d\sqrt{d}\sqrt{H} \big(\frac{\kappa^3(\kappa+\epsilon) (W+\zeta\epsilon)}{\gamma - 2\kappa^3\epsilon} + (W+\zeta\epsilon)\big)\big)^2\bigg) + O(\frac{1}{\eta})\\
        + &O\bigg(\frac{1}{\eta}Tm \big(\frac{\eta G_c D d\sqrt{d}\sqrt{H} \big(\frac{\kappa^3(\kappa+\epsilon) (W+\zeta\epsilon)}{\gamma - 2\kappa^3\epsilon} + (W+\zeta\epsilon)\big)\sqrt{m}}{1-\beta}\big)^2\bigg).
    \end{aligned}
    $}
    \end{equation}
    
    \end{proof}

\begin{proof}[Proof of Theorem \ref{T: Regret bound (unknown case)}]
    For the first $T_0$ iterations, the objective is to collect data and estimate the system. Since there is no learning process, the regret of this phase is $O(T_0)$. During the learning phase, based on how the agent's noise is estimate ($\Ab\xb_{i,t} + \Bb\ub_{i,t} + \wb_t = \widehat{\Ab}_i\xb_{i,t} + \widehat{\Bb}_i\ub_{i,t} + \hat{\wb}_{i,t}$), it's equivalent to say agent $i$ is evolving based on the system $(\widehat{\Ab}_i, \widehat{\Bb}_i, \{\hat{\wb}_i\})$. With this fact and supposing $\Kb^*$ is the optimal linear policy in hindsight, we decompose the individual regret of this phase into the following six terms:
    \begin{equation}\label{Eq1: Regret bound (Unknown case)}
    \resizebox{\linewidth}{!}
    {$
    \begin{aligned}
        &c_{i,t}\big(\xb^{\Kb}_{j,t}(\Mb_{j,T_0+1:t-1}|\widehat{\Ab}_j,\widehat{\Bb}_j,\{\hat{\wb}_j\}), \ub^{\Kb}_{j,t}(\Mb_{j,T_0+1:t}|\widehat{\Ab}_j,\widehat{\Bb}_j,\{\hat{\wb}_j\})\big)  -c_{i,t}\big(\xb^{\Kb^*}_t(0|\Ab,\Bb,\{\wb\}),\ub^{\Kb^*}_t(0|\Ab,\Bb,\{\wb\})\big)\\[10pt]
        =&\left[c_{i,t}\big(\xb^{\Kb}_{j,t}(\Mb_{j,T_0+1:t-1}|\widehat{\Ab}_j,\widehat{\Bb}_j,\{\hat{\wb}_j\}), \ub^{\Kb}_{j,t}(\Mb_{j,T_0+1:t}|\widehat{\Ab}_j,\widehat{\Bb}_j,\{\hat{\wb}_j\})\big)\right.\\
        &~~~~~~~~~~~~~~~~~~~~~~~~~~~~~~~~~~~~\left.- c_{i,t}\big(\Tilde{\xb}^{\Kb}_{j,t}(\Mb_{j,T_0+1:t-1}|\widehat{\Ab}_j,\widehat{\Bb}_j,\{\hat{\wb}_j\}), \Tilde{\ub}^{\Kb}_{j,t}(\Mb_{j,T_0+1:t}|\widehat{\Ab}_j,\widehat{\Bb}_j,\{\hat{\wb}_j\})\big)\right]\\[10pt]
        +&\left[c_{i,t}\big(\Tilde{\xb}^{\Kb}_{j,t}(\Mb_{j,T_0+1:t-1}|\widehat{\Ab}_j,\widehat{\Bb}_j,\{\hat{\wb}_j\}), \Tilde{\ub}^{\Kb}_{j,t}(\Mb_{j,T_0+1:t}|\widehat{\Ab}_j,\widehat{\Bb}_j,\{\hat{\wb}_j\})\big)\right.\\
        &~~~~~~~~~~~~~~~~~~~~~~~~~~~~~~~~~~~~\left.- c_{i,t}\big(\yb^{\Kb}_{j,t}(\Mb_{j,t-H-1:t}|\widehat{\Ab}_j,\widehat{\Bb}_j,\{\hat{\wb}_j\}), \vb^{\Kb}_{j,t}(\Mb_{j,t-H-1:t}|\widehat{\Ab}_j,\widehat{\Bb}_j,\{\hat{\wb}_j\})\big)\right]\\[10pt]
        +&\left[c_{i,t}\big(\yb^{\Kb}_{j,t}(\Mb_{j,t-H-1:t}|\widehat{\Ab}_j,\widehat{\Bb}_j,\{\hat{\wb}_j\}), \vb^{\Kb}_{j,t}(\Mb_{j,t-H-1:t}|\widehat{\Ab}_j,\widehat{\Bb}_j,\{\hat{\wb}_j\})\big)\right.\\
        &~~~~~~~~~~~~~~~~~~~~~~~~~~~~~~~~~~~~\left.- c_{i,t}\big(\yb^{\Kb}_{t}(\Mb^*|\widehat{\Ab}_i,\widehat{\Bb}_i,\{\hat{\wb}_i\}), \vb^{\Kb}_{t}(\Mb^*|\widehat{\Ab}_i,\widehat{\Bb}_i,\{\hat{\wb}_i\})\big)\right]\\[10pt]
        +&\left[c_{i,t}\big(\yb^{\Kb}_{t}(\Mb^*|\widehat{\Ab}_i,\widehat{\Bb}_i,\{\hat{\wb}_i\}), \vb^{\Kb}_{t}(\Mb^*|\widehat{\Ab}_i,\widehat{\Bb}_i,\{\hat{\wb}_i\})\big) - c_{i,t}\big(\yb^{\Kb}_{t}(\Mb^*|\Ab,\Bb,\{\wb\}), \vb^{\Kb}_{t}(\Mb^*|\Ab,\Bb,\{\wb\})\big)\right]\\[10pt]
        +&\left[c_{i,t}\big(\yb^{\Kb}_{t}(\Mb^*|\Ab,\Bb,\{\wb\}), \vb^{\Kb}_{t}(\Mb^*|\Ab,\Bb,\{\wb\})\big) - c_{i,t}\big(\xb^{\Kb}_{t}(\Mb^*|\Ab,\Bb,\{\wb\}), \ub^{\Kb}_{t}(\Mb^*|\Ab,\Bb,\{\wb\})\big)\right]\\[10pt]
        +&\left[c_{i,t}\big(\xb^{\Kb}_{t}(\Mb^*|\Ab,\Bb,\{\wb\}), \ub^{\Kb}_{t}(\Mb^*|\Ab,\Bb,\{\wb\})\big) - c_{i,t}\big(\xb^{\Kb^*}_{t}(0|\Ab,\Bb,\{\wb\}), \ub^{\Kb^*}_{t}(0|\Ab,\Bb,\{\wb\})\big)\right],
    \end{aligned}
    $}     
    \end{equation}
    where $\Tilde{\xb}^{\Kb}_{j,t}(\Mb_{j,T_0+1:t-1}|\widehat{\Ab}_j,\widehat{\Bb}_j,\{\hat{\wb}_j\})$ denotes the state assuming $\xb_{j,T_0+1} = 0$ and $\Tilde{\ub}^{\Kb}_{j,t}(\Mb_{j,T_0+1:t}|\widehat{\Ab}_j,\widehat{\Bb}_j,\{\hat{\wb}_j\})$ is the corresponding action. Each term is bounded separately as follows
    
    {\bf Term I:}\\
    Based on Lemma \ref{L: Bounds on states and actions during the exploration phase}, we have $\norm{\xb_{i,T_0+1}}\leq \frac{2\kappa^3 W\sqrt{d_2}}{\gamma}, \forall i$. Then based on Lemma \ref{L: Diff between resulting states based on different initial states}, we get
    \begin{equation}\label{Eq2: Regret bound (Unknown case)}
    \begin{split}
        &\norm{\xb^{\Kb}_{j,t}(\Mb_{j,T_0+1:t-1}|\widehat{\Ab}_j,\widehat{\Bb}_j,\{\hat{\wb}_j\}) - \Tilde{\xb}^{\Kb}_{j,t}(\Mb_{j,T_0+1:t-1}|\widehat{\Ab}_j,\widehat{\Bb}_j,\{\hat{\wb}_j\})}\\
        \leq &(\widehat{\Ab}_j - \widehat{\Bb}_j\Kb)^{t-T_0-1}\frac{2\kappa^3 W\sqrt{d_2}}{\gamma}\leq \kappa^2(1-\gamma+2\kappa^3\epsilon)^{t-T_0-1}\frac{2\kappa^3 W\sqrt{d_2}}{\gamma},
    \end{split}    
    \end{equation}
    and 
    \begin{equation}\label{Eq3: Regret bound (Unknown case)}
    \begin{split}
        &\norm{\ub^{\Kb}_{j,t}(\Mb_{j,T_0+1:t-1}|\widehat{\Ab}_j,\widehat{\Bb}_j,\{\hat{\wb}_j\}) - \Tilde{\ub}^{\Kb}_{j,t}(\Mb_{j,T_0+1:t-1}|\widehat{\Ab}_j,\widehat{\Bb}_j,\{\hat{\wb}_j\})}\\
        \leq &\kappa^3(1-\gamma+2\kappa^3\epsilon)^{t-T_0-1}\frac{2\kappa^3 W\sqrt{d_2}}{\gamma}.
    \end{split}    
    \end{equation}
    Then with Assumption $\ref{A1}$, the function difference is upper bounded as follows
    \begin{equation}\label{Eq4: Regret bound (Unknown case)}
    \resizebox{\linewidth}{!}
    {$
    \begin{aligned}
        &c_{i,t}\big(\xb^{\Kb}_{j,t}(\Mb_{j,T_0+1:t-1}|\widehat{\Ab}_j,\widehat{\Bb}_j,\{\hat{\wb}_j\}), \ub^{\Kb}_{j,t}(\Mb_{j,T_0+1:t}|\widehat{\Ab}_j,\widehat{\Bb}_j,\{\hat{\wb}_j\})\big) - c_{i,t}\big(\xb^{\Kb^*}_t(0|\Ab,\Bb,\{\wb\}),\ub^{\Kb^*}_t(0|\Ab,\Bb,\{\wb\})\big)\\
        \leq &G_c D \sqrt{2}\kappa^3(1-\gamma+2\kappa^3\epsilon)^{t-T_0-1}\frac{2\kappa^3 W\sqrt{d_2}}{\gamma},
    \end{aligned}
    $}    
    \end{equation}
    where $D = \frac{(W+\zeta\epsilon)(\kappa^2 + 2H(\kappa+\epsilon)\kappa^5)}{(\gamma - 2\kappa^3\epsilon)(1-\kappa^2(1-\gamma+2\kappa^3\epsilon)^{(H+1)})} + \frac{2(W+\zeta\epsilon)\kappa^3}{\gamma}$.
    Summing Equation \eqref{Eq4: Regret bound (Unknown case)} over $i$ and $t$, we have
    \begin{equation}\label{Eq5: Regret bound (Unknown case)}
        \sum_{t=T_0+1}^T\sum_{i=1}^m\text{Term I}\leq \frac{2\sqrt{2}G_cD\kappa^6W\sqrt{d_2}}{\gamma(\gamma - 2\kappa^3\epsilon)}.
    \end{equation}
    \\
    {\bf Terms II \& V:}\\
    Based on Lemma \ref{L: Upper bounds of (surrogate)states and (surrogate)actions and the difference}, we have for any $t\geq T_0+1$
    \begin{equation}\label{Eq6: Regret bound (Unknown case)}
    \begin{split}
        &\norm{\Tilde{\xb}^{\Kb}_{j,t}(\Mb_{j,T_0+1:t-1}|\widehat{\Ab}_j,\widehat{\Bb}_j,\{\hat{\wb}_j\}) - \yb^{\Kb}_{j,t}(\Mb_{j,t-H-1:t}|\widehat{\Ab}_j,\widehat{\Bb}_j,\{\hat{\wb}_j\})}\leq \kappa^2(1-\gamma+2\kappa^3\epsilon)^{H+1}D,
    \end{split}
    \end{equation}
    and
    \begin{equation}\label{Eq7: Regret bound (Unknown case)}
    \begin{split}
        &\norm{\Tilde{\ub}^{\Kb}_{j,t}(\Mb_{j,T_0+1:t}|\widehat{\Ab}_j,\widehat{\Bb}_j,\{\hat{\wb}_j\}) - \vb^{\Kb}_{j,t}(\Mb_{j,t-H-1:t}|\widehat{\Ab}_j,\widehat{\Bb}_j,\{\hat{\wb}_j\})}\leq \kappa^3(1-\gamma+2\kappa^3\epsilon)^{H+1}D.
    \end{split}
    \end{equation}
    Then with Assumption $\ref{A1}$, the function difference is upper bounded as follows
    \begin{equation}\label{Eq8: Regret bound (Unknown case)}
    \begin{split}
        &c_{i,t}\big(\Tilde{\xb}^{\Kb}_{j,t}(\Mb_{j,T_0+1:t-1}|\widehat{\Ab}_j,\widehat{\Bb}_j,\{\hat{\wb}_j\}), \Tilde{\ub}^{\Kb}_{j,t}(\Mb_{j,T_0+1:t}|\widehat{\Ab}_j,\widehat{\Bb}_j,\{\hat{\wb}_j\})\big)\\
        - &c_{i,t}\big(\yb^{\Kb}_{j,t}(\Mb_{j,t-H-1:t}|\widehat{\Ab}_j,\widehat{\Bb}_j,\{\hat{\wb}_j\}), \vb^{\Kb}_{j,t}(\Mb_{j,t-H-1:t}|\widehat{\Ab}_j,\widehat{\Bb}_j,\{\hat{\wb}_j\})\big)\\
        \leq &G_cD\sqrt{2} \kappa^3(1-\gamma+2\kappa^3\epsilon)^{H+1}D.
    \end{split}
    \end{equation}
    Summing Equation \eqref{Eq8: Regret bound (Unknown case)} over $i$ and $t$, we have
    \begin{equation}\label{Eq9: Regret bound (Unknown case)}
        \sum_{t=T_0+1}^T\sum_{i=1}^m\text{Term II} \leq mT (1-\gamma+2\kappa^3\epsilon)^{H+1}\sqrt{2}\kappa^3 D^2 G_c.
    \end{equation}
    Based on the similar process, term V is bounded as follows
    \begin{equation}\label{Eq10: Regret bound (Unknown case)}
        \sum_{t=T_0+1}^T\sum_{i=1}^m\text{Term V} \leq mT (1-\gamma)^{H+1}\sqrt{2}\kappa^3 D^2 G_c.
    \end{equation}
    \\
    {\bf Terms III:} By invoking Theorem \ref{T: Regret bound for OCO with memory (Unknown case)}, the upper bound is shown.\\\\
    {\bf Terms IV:}\\
    Based on Lemma \ref{L: Difference between surrogate states (actions) on the real and estiamted systems}, we have for $t\in[T_0+1, T_0+2H+1]$
    \begin{equation}\label{Eq11: Regret bound (Unknown case)}
    \resizebox{\linewidth}{!}
    {
    \begin{minipage}{\linewidth}
    $
    \begin{aligned}
        &c_{i,t}\big(\yb^{\Kb}_{t}(\Mb^*|\widehat{\Ab}_i,\widehat{\Bb}_i,\{\hat{\wb}_i\}), \vb^{\Kb}_{t}(\Mb^*|\widehat{\Ab}_i,\widehat{\Bb}_i,\{\hat{\wb}_i\})\big) - c_{i,t}\big(\yb^{\Kb}_{t}(\Mb^*|\Ab,\Bb,\{\wb\}), \vb^{\Kb}_{t}(\Mb^*|\Ab,\Bb,\{\wb\})\big)\\
    \leq & G_c D \sqrt{\norm{\yb^{\Kb}_{t}(\Mb^*|\widehat{\Ab}_i,\widehat{\Bb}_i,\{\hat{\wb}_i\}) - \yb^{\Kb}_{t}(\Mb^*|\Ab,\Bb,\{\wb\})}^2 + \norm{\vb^{\Kb}_{t}(\Mb^*|\widehat{\Ab}_i,\widehat{\Bb}_i,\{\hat{\wb}_i\}) - \vb^{\Kb}_{t}(\Mb^*|\Ab,\Bb,\{\wb\})}^2}\\
    \leq & G_cD \sqrt{2}\kappa\left(\frac{2\kappa^5\epsilon(W+\zeta\epsilon)}{(\gamma - 2\kappa^3\epsilon)^2} + \frac{4\kappa^8H^2(\kappa+\epsilon)\epsilon(W+\zeta\epsilon)}{(1-\gamma+2\kappa^3\epsilon)(\gamma - 2\kappa^3\epsilon)} + \frac{(H+1)2\kappa^5\epsilon(W+\zeta\epsilon)}{(1-\gamma)\gamma } + \left(\frac{\kappa^2 + (H)2\kappa^6}{\gamma}\right)(2W + \zeta\epsilon)\right)\\
    +&G_cD\sqrt{2}\frac{2\kappa^3(2W+\zeta\epsilon)}{\gamma},
    \end{aligned}
    $    
    \end{minipage}
    }
    \end{equation}
    where the last inequality is due to the fact that $\norm{\hat{\wb}_{i,t} - \wb_t}\leq 2W + \zeta\epsilon$ for $t< T_0+2H+1$ and 
    \begin{equation*}
    \begin{split}
        &\vb^{\Kb}_{t}(\Mb^*|\widehat{\Ab}_i, \widehat{\Bb}_i,\{\hat{\wb}_i\}) - \vb^{\Kb}_{t}(\Mb^*|\Ab, \Bb,\{\wb\})\\
        = &-\Kb\big(\yb^{\Kb}_{t}(\Mb^*|\widehat{\Ab}_i, \widehat{\Bb}_i,\{\hat{\wb}_i\}) - \yb^{\Kb}_{t}(\Mb^*|\Ab, \Bb,\{\wb\})\big) + \sum_{k=1}^H \Mb^{*[k-1]}(\hat{\wb}_{i,t-k} - \wb_{t-k}).
    \end{split}    
    \end{equation*}
    Similiarly, for $t\geq T_0+2H+2$ we have
    \begin{equation}\label{Eq12: Regret bound (Unknown case)}
    \resizebox{\linewidth}{!}
    {
    \begin{minipage}{\linewidth}
    $
    \begin{aligned}
        &c_{i,t}\big(\yb^{\Kb}_{t}(\Mb^*|\widehat{\Ab}_i,\widehat{\Bb}_i,\{\hat{\wb}_i\}), \vb^{\Kb}_{t}(\Mb^*|\widehat{\Ab}_i,\widehat{\Bb}_i,\{\hat{\wb}_i\})\big) - c_{i,t}\big(\yb^{\Kb}_{t}(\Mb^*|\Ab,\Bb,\{\wb\}), \vb^{\Kb}_{t}(\Mb^*|\Ab,\Bb,\{\wb\})\big)\\
    \leq & G_cD \sqrt{2}\kappa\left(\frac{2\kappa^5\epsilon(W+\zeta\epsilon)}{(\gamma - 2\kappa^3\epsilon)^2} + \frac{4\kappa^8H^2(\kappa+\epsilon)\epsilon(W+\zeta\epsilon)}{(1-\gamma+2\kappa^3\epsilon)(\gamma - 2\kappa^3\epsilon)} + \frac{(H+1)2\kappa^5\epsilon(W+\zeta\epsilon)}{(1-\gamma)\gamma } + \left(\frac{\kappa^2 + (H)2\kappa^6}{\gamma}\right) \zeta\epsilon\right)\\
    +&G_cD\sqrt{2}\frac{2\kappa^3(\zeta\epsilon)}{\gamma}.
    \end{aligned}
    $
    \end{minipage}
    }
    \end{equation}
    With Equations \eqref{Eq11: Regret bound (Unknown case)} and \eqref{Eq12: Regret bound (Unknown case)}, we get
    \begin{equation}
    \resizebox{\linewidth}{!}
    {
    \begin{minipage}{\linewidth}
    $
    \begin{aligned}
        &\sum_{t=T_0+1}^T\sum_{i=1}^m\text{Term IV}\\
        \leq&TmG_cD \sqrt{2}\kappa\left(\frac{2\kappa^5\epsilon(W+\zeta\epsilon)}{(\gamma - 2\kappa^3\epsilon)^2} + \frac{4\kappa^8H^2(\kappa+\epsilon)\epsilon(W+\zeta\epsilon)}{(1-\gamma+2\kappa^3\epsilon)(\gamma - 2\kappa^3\epsilon)} + \frac{(H+1)2\kappa^5\epsilon(W+\zeta\epsilon)}{(1-\gamma)\gamma } + \left(\frac{\kappa^2 + (H)2\kappa^6}{\gamma}\right) \zeta\epsilon\right)\\
    +&TmG_cD\sqrt{2}\frac{2\kappa^3(\zeta\epsilon)}{\gamma} + (2H+1)G_cD\sqrt{2}\big(\frac{2W(\kappa^2 + H2\kappa^6)}{\gamma} + \frac{4\kappa^3W}{\gamma}\big).
    \end{aligned}
    $    
    \end{minipage}
    }
    \end{equation}
    {\bf Terms VI:} By invoking Lemma \ref{L: Approximate a linear policy using time-invariant DFC policy}, the upper bound is shown.\\\\
    Combining the separate upper bounds above, we have
    \begin{equation*}
    \resizebox{\linewidth}{!}
    {
    \begin{minipage}{\linewidth}
    \begin{align*}
        &J_T^j(\mathcal{A}) - \min_{\Kb\in\mathcal{K}}J_T(\Kb)\\
        \leq &O(T_0) + O(\frac{2\sqrt{2}G_cD\kappa^6W\sqrt{d_2}}{\gamma(\gamma - 2\kappa^3\epsilon)})  + O(mT (1-\gamma+2\kappa^3\epsilon)^{H+1}\sqrt{2}\kappa^3 D^2 G_c) + O(mT (1-\gamma)^{H+1}\sqrt{2}\kappa^3 D^2 G_c)\\
        + &O\bigg(\eta Tm H^3(G_c D)^2(\kappa+\epsilon)\kappa^3 (W+\zeta\epsilon)\frac{ \big(\frac{\kappa^3(\kappa+\epsilon) (W+\zeta\epsilon)}{\gamma - 2\kappa^3\epsilon} + (W+\zeta\epsilon)\big)\sqrt{m}}{1-\beta}\bigg)\\
        + &O\bigg(mTG_cD \sqrt{2}\left[\kappa\left(\frac{2\kappa^5\epsilon^{\prime}(W+\zeta\epsilon)}{(\gamma - 2\kappa^3\epsilon)^2} + \frac{4\kappa^8H^2(\kappa+\epsilon)\epsilon^{\prime}(W+\zeta\epsilon)}{(1-\gamma+2\kappa^3\epsilon)(\gamma - 2\kappa^3\epsilon)} + \frac{(H+1)2\kappa^5\epsilon^{\prime}(W+\zeta\epsilon)}{(1-\gamma+2\kappa^3\epsilon)(\gamma - 2\kappa^3\epsilon)} + \left(\frac{\kappa^2 + H(\kappa+\epsilon)2\kappa^5}{\gamma - 2\kappa^3\epsilon}\right)(2\zeta\epsilon)\right) + \frac{4\kappa^3\zeta\epsilon}{\gamma}\right]\bigg)\\
        + &O\big(\eta T m\sqrt{m}G_cDd^3H\frac{\big(\frac{\kappa^3(\kappa+\epsilon) (W+\zeta\epsilon)}{\gamma - 2\kappa^3\epsilon} + (W+\zeta\epsilon)\big)^2}{1-\beta}\big) + O\bigg(\eta Tm \big(G_c D d\sqrt{d}\sqrt{H} \big(\frac{\kappa^3(\kappa+\epsilon) (W+\zeta\epsilon)}{\gamma - 2\kappa^3\epsilon} + (W+\zeta\epsilon)\big)\big)^2\bigg) + O(\frac{1}{\eta})\\
        + &O\bigg(\frac{1}{\eta}Tm \big(\frac{\eta G_c D d\sqrt{d}\sqrt{H} \big(\frac{\kappa^3(\kappa+\epsilon) (W+\zeta\epsilon)}{\gamma - 2\kappa^3\epsilon} + (W+\zeta\epsilon)\big)\sqrt{m}}{1-\beta}\big)^2\bigg)\\
        +&O\left(TmG_cD \sqrt{2}\kappa\left(\frac{2\kappa^5\epsilon(W+\zeta\epsilon)}{(\gamma - 2\kappa^3\epsilon)^2} + \frac{4\kappa^8H^2(\kappa+\epsilon)\epsilon(W+\zeta\epsilon)}{(1-\gamma+2\kappa^3\epsilon)(\gamma - 2\kappa^3\epsilon)} + \frac{(H+1)2\kappa^5\epsilon(W+\zeta\epsilon)}{(1-\gamma)\gamma } + \left(\frac{\kappa^2 + (H)2\kappa^6}{\gamma}\right) \zeta\epsilon\right)\right)\\
        +&O\left(TmG_cD\sqrt{2}\frac{2\kappa^3(\zeta\epsilon)}{\gamma} + (2H+1)G_cD\sqrt{2}\big(\frac{2W(\kappa^2 + H2\kappa^6)}{\gamma} + \frac{4\kappa^3W}{\gamma}\big)\right)\\
        +&O(TmG_c D\frac{6WH \kappa_B^2\kappa^6(1-\gamma)^{H-1}}{\gamma}),
    \end{align*}   
    \end{minipage}
    }
    \end{equation*}
    where by choosing $\eta = $, $T_{collect} = $ and $T_{exchange} = O(\log T^{\rho})$ with $\rho$ large enough, and knowing that $\epsilon' = O(\epsilon)$ and $\epsilon = O\big(\frac{d_2W\sqrt{d_1 q}\kappa^{11/2}}{\gamma}\sqrt{\frac{\log\big(d_1 d_2 (q+1)m\delta^{-1}\big)}{m\big(T_{collect}-(q+1)\big)}}\big)$ from Theorem \ref{T: System identification}, the result is proved.
\end{proof}


\end{document}